\documentclass[12pt,draft]{amsart}
\usepackage[all]{xy}
\usepackage{amsfonts}
\usepackage{amssymb}
\usepackage{color}

\textheight22.5cm
\textwidth16cm
\addtolength{\topmargin}{-20pt}
\evensidemargin-0.5cm
\oddsidemargin-0.5cm

\newtheorem{teo}{Theorem}[section]
\newtheorem{lem}[teo]{Lemma}
\newtheorem{prop}[teo]{Proposition}

\newtheorem{cor}[teo]{Corollary}
\newtheorem{conj}[teo]{Conjecture}
\theoremstyle{definition}
\newtheorem{dfn}[teo]{Definition}
\newtheorem{rk}[teo]{Remark}
\newtheorem{ex}[teo]{Example}
\def\<{\langle}
\def\>{\rangle}
\def\ss{\subset}

\def\a{\alpha}
\def\b{\beta}

\def\e{\varepsilon}
\def\l{{\lambda}}
\def\r{\rho}

\def\t{\tau}

\def\w{\omega}
\def\f{{\varphi}}

\def\G{{\Gamma}}
\def\F{{\Phi}}

\def\C{{\mathbb C}}

\def\Z{{\mathbb Z}}

\def\A{{\mathcal A}}
\def\K{{\mathcal K}}

\def\End{\mathop{\rm End}\nolimits}
\def\Ker{\mathop{\rm Ker}\nolimits}

\def\Coker{\operatorname{Coker}}
\def\Id{\operatorname{Id}}

\def\Prim{\operatorname{Prim}}

\def\Tr{\operatorname{Tr}}
\def\supp{\operatorname{supp}}

\def\T{{\mathbb T}}
\def\1{\mathbf 1}

\def\ola#1{\stackrel{#1}{\longrightarrow}}

\newcommand{\ov}[1]{\overline{#1}}
\newcommand{\til}[1]{\widetilde{#1}}
\newcommand{\wh}[1]{\widehat{#1}}

\newcommand{\Mat}[4]{\left( \begin{array}{cc}
                            #1 & #2 \\
                            #3 & #4
                      \end{array} \right)}
\newcommand{\Matt}[9]{\left( \begin{array}{ccc}
                            #1 & #2 & #3 \\
                            #4 & #5 & #6 \\
                            #7 & #8 & #9
                      \end{array} \right)}

\def\Fix{\operatorname{Fix}}
\def\Rep{\operatorname{Rep}}

\def\N{{\mathbb N}}
\newcommand\hk[1]{\stackrel\frown{#1}}
\def\cd{\operatorname{cd}}


\begin{document}

\title[Reidemeister classes and twisted Burnside theorem]
{Geometry of Reidemeister classes
and twisted Burnside theorem}

\author{Alexander Fel'shtyn}
\address{Instytut Matematyki, Uniwersytet Szczecinski,
ul. Wielkopolska 15, 70-451 Szczecin, Poland and Department of Mathematics,
Boise State University,
1910 University Drive, Boise, Idaho, 83725-155, USA}
\email{felshtyn@diamond.boisestate.edu, felshtyn@mpim-bonn.mpg.de}
\author{Evgenij Troitsky}
\thanks{The second author is partially supported by
RFFI Grant  05-01-00923
and Grant ``Universities of Russia''.}
\address{Dept. of Mech. and Math., Moscow State University,
119992 GSP-2  Moscow, Russia}
\email{troitsky@mech.math.msu.su}
\urladdr{
http://mech.math.msu.su/\~{}troitsky}

\keywords{Reidemeister number, twisted conjugacy classes,
Burnside theorem, type I group, type II${}_1$ group,
Baumslag-Solitar groups, Ivanov group, Osin group,
almost polycyclic group,
Riesz-Markov-Kakutani theorem,
Dauns-Hofmann theorem, Fourier-Stieltjes algebra}
\subjclass[2000]{20E45; 37C25; 43A20; 43A30; 46L; 55M20}

\begin{abstract}
The purpose of the present mostly expository
paper (based mainly on \cite{FelTro,FelTroVer,ncrmkwb,polyc,FelGon})
is to present the current state of the following conjecture of
A.~Fel'shtyn and R.~Hill \cite{FelHill}, which is a generalization
of the classical Burnside theorem.

Let $G$ be a countable discrete group, $\phi$ one of its  automorphisms,
$R(\phi)$ the number of $\phi$-conjugacy (or twisted conjugacy)
classes, and
$S(\phi)=\# \Fix (\wh\phi)$ the
number of $\phi$-invariant equivalence classes of irreducible
unitary representations. If one of $R(\phi)$ and
$S(\phi)$ is finite, then it is equal to the other.

This conjecture plays a  important role in the theory of
twisted conjugacy classes (see \cite{Jiang}, \cite{FelshB})
and has very important  consequences
in Dynamics, while its proof needs rather sophisticated  results from
Functional and Non-commutative Harmonic Analysis.

First we prove this conjecture for finitely generated groups of type I
and discuss its applications.

After that we discuss an important example of an automorphism of
a type II${}_{1}$ group which disproves the original formulation
of the conjecture.

Then we prove a version of the conjecture for a wide class of groups,
including almost polycyclic groups (in particular, finitely generated groups
of polynomial growth). In this formulation the role of
an appropriate dual object plays the finite-dimensional part of the unitary
dual. Some counter-examples are discussed.

Then we begin a  discussion of
the general case (which also needs new definition of the dual object)
and prove the weak twisted Burnside theorem for general countable
discrete groups. For this purpose we prove a non-commutative version
of Riesz-Markov-Kakutani representation theorem.

Finally we explain why the Reidemeister numbers are always infinite
for Baumslag-Solitar groups.
\end{abstract}

\dedicatory{Dedicated to the memory of Professor Yuri Solovyov}

\maketitle

\tableofcontents

\section{Introduction and formulation of results}

\begin{dfn}
Let $G$ be a countable discrete group and $\phi: G\rightarrow G$ an
endomorphism.
Two elements $x,x'\in G$ are said to be
 $\phi$-{\em conjugate} or {\em twisted conjugate,}
iff there exists $g \in G$ with
$$
x'=g  x   \phi(g^{-1}).
$$
We shall write $\{x\}_\phi$ for the $\phi$-{\em conjugacy} or
{\em twisted conjugacy} class
 of the element $x\in G$.
The number of $\phi$-conjugacy classes is called the {\em Reidemeister number}
of an  endomorphism $\phi$ and is  denoted by $R(\phi)$.
If $\phi$ is the identity map then the $\phi$-conjugacy classes are the usual
conjugacy classes in the group $G$.
\end{dfn}

If $G$ is a finite group, then the classical Burnside theorem (see e.g.
\cite[p.~140]{Kirillov})
says that the number of
classes of irreducible representations is equal to the number of conjugacy
classes of elements of $G$.  Let $\wh G$ be the {\em unitary dual} of $G$,
i.e. the set of equivalence classes of unitary irreducible
representations of $G$.

\begin{rk}
If $\phi: G\to G$ is an epimorphism, it induces a map $\wh\phi:\wh G\to\wh G$,
$\wh\phi (\r)=\r\circ\phi$
(because a representation is irreducible if and only if the  scalar operators in the space of
representation are the only ones which commute with all operators of the
representation). This is not the case for a general endomorphism $\phi$,
because $\r \phi$ can be reducible for an irreducible representation $\r$,
and  $\wh\phi$ can be defined only as a multi-valued map.
But
nevertheless we can define the set of fixed points $\Fix \wh\phi$
of $\wh\phi$ on $\wh G$.
\end{rk}

Therefore, by the Burnside's theorem, if $\phi$ is the identity automorphism
of any finite group $G$, then we have
 $R(\phi)=\#\Fix(\wh\phi)$.

To formulate the main theorem of the first part of the paper
for the case
of a general endomorphism we first need
an appropriate definition of the $\Fix(\wh\phi)$.

\begin{dfn}
Let $\Rep(G)$ be the space of equivalence classes of
finite dimensional unitary representations of $G$.
Then the corresponding map $\wh\phi_R:\Rep(G)\to \Rep(G)$
is defined in the same way as above: $\wh\phi_R(\r)=\r\circ\phi$.

Let us denote by $\Fix(\wh\phi)$ the set of points $\r\in\wh G\ss
\Rep(G)$ such that $\wh\phi_R (\r)=\r$.
\end{dfn}

\begin{teo}
\label{teo:mainth1} Let $G$ be a finitely generated discrete group
of type {\rm I}, $\phi$ one of its endomorphism, $R(\phi)$ the
number of $\phi$-conjugacy classes, and $S(\phi)=\# \Fix
(\wh\phi)$ the number of $\wh\phi$-invariant equivalence classes
of irreducible unitary representations. If one of $R(\phi)$ and
$S(\phi)$ is finite, then it is equal to the other.
\end{teo}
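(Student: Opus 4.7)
My plan is to pass to the group C*-algebra $C^*(G)$ and exploit its structure as a type I algebra, matching twisted class functions on $G$ with $\phi$-twisted functionals on $C^*(G)$, which are in turn controlled by $\Fix(\wh\phi)$. First I construct twisted characters: for each $\rho\in\Fix(\wh\phi)$, Schur's lemma supplies an intertwiner $A_\rho$, unique up to scalar, with $\rho(\phi(g))=A_\rho^{-1}\rho(g)A_\rho$. Since $G$ is discrete, finitely generated, and of type I, Thoma's theorem says $G$ is virtually abelian, so every $\rho\in\wh G$ is finite-dimensional and the twisted character $\chi^\phi_\rho(g):=\tr(\rho(g)A_\rho)$ is a well-defined scalar. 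A direct substitution using cyclicity of the trace shows $\chi^\phi_\rho(hg\phi(h^{-1}))=\chi^\phi_\rho(g)$, so $\chi^\phi_\rho$ is a twisted class function.

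For $S(\phi)\leq R(\phi)$: when $R(\phi)<\infty$, the space of twisted class functions has dimension $R(\phi)$ (identifying with functions on the finite set of twisted classes). The family $\{\chi^\phi_\rho\}_{\rho\in\Fix(\wh\phi)}$ is linearly independent by Schur orthogonality of matrix coefficients of inequivalent irreducibles, which for discrete type I groups reduces to the Plancherel decomposition with atomic spectrum. Hence $S(\phi)\leq R(\phi)$.

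For the reverse $R(\phi)\leq S(\phi)$: lift to $C^*(G)$, which for a type I group decomposes over $\wh G$ into factors isomorphic to $\End(H_\rho)$. A $\phi$-twisted functional $\tau$ on the group algebra, satisfying $\tau(ab)=\tau(b\phi(a))$, decomposes fiberwise; the twisting condition intertwines $\pi$ with $\pi\circ\phi=\wh\phi(\pi)$, so nonzero contributions can only arise over $\Fix(\wh\phi)$, and above each fixed point only a one-dimensional space of twisted traces survives (the scalar multiples of $\chi^\phi_\rho$, by Schur applied to $A_\rho$). Thus the space of $\phi$-twisted functionals has dimension at most $S(\phi)$, and since the indicator functions of the distinct twisted classes yield $R(\phi)$ linearly independent such functionals, we conclude $R(\phi)\leq S(\phi)$.

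The main obstacle is this second inequality. One must rule out that a $\phi$-twisted functional contributes through continuous or infinite-multiplicity parts of the Plancherel spectrum, and must also produce enough twisted traces on each fiber to realise indicator functions of all twisted classes. The type I hypothesis is essential here, because it makes $C^*(G)$ a liminal algebra whose ideal structure is parametrised by $\wh G$; combined with the Dauns--Hofmann description of the center of the multiplier algebra and a noncommutative Riesz--Markov--Kakutani representation of $\phi$-twisted functionals as measures on $\wh G$, this forces every $\phi$-twisted functional to be a linear combination of the twisted characters $\chi^\phi_\rho$, which closes the argument.
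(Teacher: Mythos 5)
Your construction of the twisted characters $\chi^\phi_\rho(g)=\tr(\rho(g)A_\rho)$ and the inequality $S(\phi)\le R(\phi)$ are sound and essentially follow the paper's route: these functionals are twisted class functions, distinct fixed points give linearly independent ones, and the space of twisted class functions has dimension $R(\phi)$ when that number is finite. However, your justification of the linear independence is wrong as stated: ``Schur orthogonality \dots\ Plancherel decomposition with atomic spectrum'' fails for infinite discrete type I groups --- already for $G=\Z$ the Plancherel measure is Lebesgue measure on $\T$, with no atoms, and matrix coefficients of finite-dimensional irreducibles are not square-summable. What is true (and suffices) is the purely algebraic linear independence of matrix coefficients of pairwise inequivalent finite-dimensional irreducible representations; the paper instead derives the independence from the $T_1$-separation of $\wh G$ via a quantitative construction of almost-separating elements $a_i$ (Lemma \ref{lem:RiGF}). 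Note also that the theorem concerns endomorphisms, so for non-surjective $\phi$ the fixed-point notion must be set up via $\Rep(G)$ as in the paper, not via an equivalence $\rho(\phi(g))=A_\rho^{-1}\rho(g)A_\rho$ presumed from the start.

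The genuine gap is in $R(\phi)\le S(\phi)$. Your plan --- decompose a twisted-invariant functional fiberwise over $\wh G$ and kill contributions away from $\Fix(\wh\phi)$ by liminality, Dauns--Hofmann and a noncommutative Riesz--Markov--Kakutani theorem --- can only be run for functionals on $C^*(G)$, i.e.\ for functions in the Fourier--Stieltjes algebra $B(G)$. But the functionals you must control, the indicator functions of the $R(\phi)$ twisted conjugacy classes, are a priori only elements of $L^\infty(G)$, that is, functionals on $L^1(G)$; nothing in your argument shows they extend to $C^*(G)$. This is exactly the issue the paper flags with $R_*(\phi)\le R(\phi)$, and it is why the RMK/Glimm machinery yields only the weak theorem \ref{teo:weakburn}; moreover, the measure representation (Theorem \ref{teo:razryv}) requires a Hausdorff base space and therefore lives on the Glimm spectrum, not on $\wh G$, which for discrete type I groups is merely $T_1$ (the infinite dihedral group already has a non-Hausdorff dual), so transferring generalized fixed points back to $\wh G$ would be yet another unproved step. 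The paper closes this gap with a different, elementary ingredient that your proposal lacks and which is where finite generation is actually used: $G$ has a $\phi$-invariant normal abelian subgroup of finite index, and the extension/induced-representation analysis of Section \ref{sec:extens}, culminating in Corollary \ref{lem:osnoskl}, shows that the characteristic functions of Reidemeister classes are finite linear combinations of matrix coefficients of finitely many finite-dimensional irreducible representations. Once this is known, each contributing fiber functional is twisted-invariant, Schur's lemma forces the corresponding fiber to be a fixed point of $\wh\phi$, and $R(\phi)\le S(\phi)$ follows with no measure theory at all. Without a substitute for that lemma, your second inequality does not close.
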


Let $\mu(d)$, $d\in\N$, be the {\em M\"obius function},
i.e.
$$
\mu(d) =
\left\{
\begin{array}{ll}
1 & {\rm if}\ d=1,  \\
(-1)^k & {\rm if}\ d\ {\rm is\ a\ product\ of}\ k\ {\rm distinct\ primes,}\\
0 & {\rm if}\ d\ {\rm is\ not\ square-free.}
\end{array}
\right.
$$

\begin{teo}[Congruences for the Reidemeister numbers]\label{teo:mainth3}
Let $\phi:G$ $\to G$ be an endomorphism of a countable discrete group $G$
such that
all numbers $R(\phi^n)$ are finite and let $H$ be a subgroup
 of $G$ with the properties
$$
  \phi(H) \subset H,
$$
$$
  \forall x\in G \; \exists n\in \N \hbox{ such that } \phi^n(x)\in H.
$$
If the pair  $(H,\phi^n)$ satisfies the conditions of
Theorem~{\rm~\ref{teo:mainth1}}
for any $n\in\N$,
then one has for all $n$,
 $$
 \sum_{d\mid n} \mu(d)\cdot R(\phi^{n/d}) \equiv 0 \mod n.
 $$
\end{teo}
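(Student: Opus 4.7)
The strategy is to reduce Reidemeister-number computations on $G$ to those on $H$ where Theorem~\ref{teo:mainth1} applies, then identify $R(\phi^n)$ with the number of $\wh\phi$-periodic representations of period dividing $n$, and extract the congruence by orbit counting (the standard Dold-type argument).

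\emph{Step 1: Reduction to $H$.} I would show that the inclusion $H\hookrightarrow G$ induces a bijection between $\phi^n$-conjugacy classes in $H$ and in $G$, so that $R(\phi^n)=R_H(\phi^n|_H)$. The key identity is $x\sim_\phi \phi(x)$ (taking conjugator $g=x^{-1}$ in $g x\phi(g^{-1})$), hence $x\sim_\phi \phi^k(x)$ for every $k$. By hypothesis some $\phi^k(x)$ lies in $H$, giving surjectivity of the class map. For injectivity, given $h_1,h_2\in H$ with $h_2=g h_1\phi(g^{-1})$ for some $g\in G$, I apply $\phi^k$ with $k$ large enough that $\phi^k(g)\in H$; this produces $\phi^k(h_1)\sim_H\phi^k(h_2)$, and combined with $h_i\sim_H\phi^k(h_i)$ inside $H$ yields $h_1\sim_H h_2$. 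The same argument works for $\phi^n$ in place of $\phi$, since $\phi^n(H)\ss H$ and $\phi^{nk}(x)\in H$ for $k$ large.

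\emph{Step 2: Fixed points of $\wh\phi$.} Applying Theorem~\ref{teo:mainth1} to $(H,\phi^n)$ gives $R_H(\phi^n|_H)=|F_n|$, where $F_n:=\Fix(\wh{\phi^n})\ss\wh H$. I must check that $\wh\phi$ is a genuine (single-valued) permutation of $F_n$: for $[\rho]\in F_n$, the equivalence $\rho\circ\phi^n\simeq\rho$ together with Burnside's density theorem shows that the algebra generated by $\rho(\phi^n(H))$ equals $\End(V_\rho)$, so any subspace invariant under $\rho\circ\phi$ is automatically invariant under $\rho\circ\phi^n$ and must be $0$ or $V_\rho$; thus $\rho\circ\phi$ is irreducible, and the same intertwiner shows $[\rho\circ\phi]\in F_n$. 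Hence $\wh\phi$ acts on the finite set $F_n$ as a permutation of order dividing $n$.

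\emph{Step 3: Orbit counting.} Let $P_d\ss F_d$ be the set of $\wh\phi$-periodic representations of exact period $d$. Then $F_n=\bigsqcup_{d\mid n}P_d$ and $|P_d|=d\cdot(\#\text{orbits of exact period } d)$, so Möbius inversion gives
$$
\sum_{d\mid n}\mu(d)\, R(\phi^{n/d}) \;=\; \sum_{d\mid n}\mu(d)\,|F_{n/d}| \;=\; |P_n| \;\equiv\; 0 \pmod{n},
$$
which is the desired congruence. I expect the principal obstacle to be Step~1 — establishing the bijection $R(\phi^n)=R_H(\phi^n|_H)$ via the inclusion of $H$ — since Steps~2 and~3 are essentially formal once that reduction is in hand and Theorem~\ref{teo:mainth1} has been applied.
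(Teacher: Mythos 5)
Your proposal is correct and takes essentially the approach the paper intends (the paper itself defers the proof of this theorem to \cite{FelHill,FelTro}): reduce $R(\phi^n)$ to $R(\phi^n|_H)$ using the key identity $x\sim_{\phi^n}\phi^n(x)$ together with the eventual-absorption hypothesis, apply Theorem~\ref{teo:mainth1} to the pair $(H,\phi^n)$, and then conclude by the standard orbit-counting and M\"obius inversion argument for the permutation induced by $\wh\phi$ on the finite sets of periodic points. No substantive gaps.
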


These theorems were proved previously in a   special case of
Abelian finitely generated plus finite group \cite{FelHill,FelBanach}.

For groups of type II${}_1$ the situation is much more complicated.
We discuss in detail the case of a semi-direct product of the action
of $\Z$ on $\Z\oplus \Z$ by a hyperbolic automorphism with finite
Reidemeister number (four to be precise) and the number of fixed points
of $\wh\phi$ on $\wh G$ equal or greater than five~\cite{FelTroVer}.
The origin of this
phenomenon lies in bad separation properties of $\wh G$ for general
discrete groups. A more deep study leads to the following general theorem.

 \begin{teo}[{weak twisted Burnside theorem, \cite{ncrmkwb}}]\label{teo:weakburn}
The number $R_*(\phi)$ of Reidemeister classes related to
twisted invariant functions on $G$ from the Fourier-Stieltjes
algebra $B(G)$ is  equal to the number $S_*(\phi)$ of generalized
fixed points of $\wh\phi$ on the Glimm spectrum of $G$, i.~e. on
the complete regularization of $\wh G$, if one of $R_*(\phi)$ and
$S_*(\phi)$ is finite.
 \end{teo}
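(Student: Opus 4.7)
The plan is to identify both sides of the equality with measurements of the same space of $\phi$-twisted central functionals on the group $C^*$-algebra $C^*(G)$, and then to invoke the noncommutative Riesz--Markov--Kakutani representation theorem together with the Dauns--Hofmann theorem to transfer the count from the functional-analytic side to the spectral side. First I would exploit the canonical duality $B(G)=C^*(G)^*$: a $\phi$-twisted central function $u\in B(G)$, i.e.\ one with $u(gx\phi(g^{-1}))=u(x)$, corresponds to a bounded functional $\omega_u$ on $C^*(G)$ satisfying a twisted trace condition $\omega_u(a\cdot\phi_*(b))=\omega_u(b\cdot a)$, where $\phi_*$ is the $*$-endomorphism of $C^*(G)$ induced by $\phi$. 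By construction the linear dimension of the subspace $B(G)^\phi$ of $\phi$-central Fourier--Stieltjes functions equals $R_*(\phi)$.

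Next I would apply the Dauns--Hofmann theorem to identify the center $Z(M(C^*(G)))$ of the multiplier algebra with $C_b(\operatorname{Glimm}(G))$. The restriction of $\phi_*$ to the center corresponds to a continuous self-map $\wh\phi_{\mathrm{gl}}$ of $\operatorname{Glimm}(G)$ whose fixed-point set has cardinality $S_*(\phi)$. The third and central step is to invoke the noncommutative Riesz--Markov--Kakutani theorem announced in the abstract: it lets one disintegrate every $\phi$-central functional in $B(G)$ uniquely as an integral over $\operatorname{Glimm}(G)$ of pure fiberwise twisted traces. The twisted trace condition then forces the disintegrating measure to be $\wh\phi_{\mathrm{gl}}$-invariant, and the finiteness hypothesis on one of the two numbers collapses this invariant measure to a finite linear combination of Dirac masses at fixed points of $\wh\phi_{\mathrm{gl}}$. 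After decomposing general complex $\phi$-central functionals into positive ones via the usual real-imaginary and Jordan splittings, this sets up a bijection between a linear basis of $B(G)^\phi$ and $\Fix(\wh\phi_{\mathrm{gl}})$, proving the equality.

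The hard part will be the noncommutative Riesz--Markov--Kakutani step itself: one must control weak-$*$ continuity on $C^*(G)^*$ against the rather weak separation of $\operatorname{Glimm}(G)$ and ensure that the disintegrating fiberwise data really corresponds to elements of $B(G)$ rather than to unbounded or merely positive-linear functionals on the fiber algebras, which is why a passage to the complete regularization of $\wh G$ is essential. A subsidiary subtle point is ruling out a continuous component of a $\wh\phi_{\mathrm{gl}}$-invariant measure on $\operatorname{Glimm}(G)$ when only one of $R_*(\phi)$, $S_*(\phi)$ is assumed finite; this will rely on the fact that only genuine fixed points (not higher-period orbits) support $\phi$-central functionals, together with an extremality argument that concentrates the decomposition on atoms.
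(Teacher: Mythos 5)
Your overall architecture --- Dauns--Hofmann sectional representation of $C^*(G)$ over the Glimm spectrum, a noncommutative Riesz--Markov--Kakutani theorem producing functional-valued measures, concentration of twisted-invariant measures on fixed points, and linear independence via Hausdorff separation --- is indeed the strategy of the paper. But there is a genuine gap at your central identification: you take $S_*(\phi)$ to be the cardinality of $\Fix(\wh\phi_{\mathrm{gl}})$ for the induced self-homeomorphism of the Glimm spectrum, and you claim a bijection between a linear basis of the $\phi$-central part of $B(G)$ and these fixed points. Nothing forces a fixed Glimm point to carry exactly a one-dimensional space of twisted-invariant functionals, and with your reading the statement is simply false: take $G=F_2$ and $\phi=\Id$. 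Then $C^*(F_2)$ is primitive, so every continuous real function on $\Prim C^*(F_2)$ is constant and the Glimm spectrum is a single point, trivially fixed; hence your $S_*(\Id)=1$ is finite. But the $\Id$-twisted-invariant functionals are exactly the tracial functionals, which form an infinite-dimensional space (pull back point evaluations along $C^*(F_2)\to C(\T^2)$), so $R_*(\Id)=\infty$. This is precisely why the paper does \emph{not} define generalized fixed points through a point map: a Glimm ideal $I$ is a generalized fixed point iff the closed linear span $K_I$ of the twisted boundaries $b-g[b]$, $g[b]=L_g\,b\,L_{\phi(g^{-1})}$, is a \emph{proper} subspace of $A_I=A/I$, and then $S_*(\phi):=\sum_{I\in GFP}\dim(A_I/K_I)$. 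The theorem is an equality of dimensions counted with multiplicities, not a basis-to-points bijection; in the $F_2$ example it reads $\infty=\infty$, consistently.

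The same misidentification produces the difficulty you flag at the end but do not resolve (ruling out a ``continuous component'' and higher-period orbits by an unspecified extremality argument). With the correct definition no dynamical or extremality argument is needed, and finiteness plays no role at the concentration step: by $*$-weak regularity, twisted invariance of a functional passes to every value $\mu(E)\in C^*(G)^*$ of its operator-field measure (Lemma \ref{lem:ograninv}), and off the set of generalized fixed points every section is, locally in norm, approximable by finite sums of twisted boundaries, so a compactness-and-partition argument shows $\mu$ vanishes there automatically; finiteness and Hausdorffness of the Glimm spectrum are used only afterwards, to conclude linear independence and the equality of dimensions. Two further imprecisions in your sketch point the same way: the twisted action $b\mapsto L_g\,b\,L_{\phi(g^{-1})}$ is not multiplicative, so it does not act on ideals, and ``$\wh\phi_{\mathrm{gl}}$-invariance of the disintegrating measure'' is not even well-posed --- what is true and what the proof needs is twisted invariance of the functionals $\mu(E)$; and ``pure fiberwise twisted traces'' overstates what the noncommutative Riesz theorem delivers, since the values of the measure are merely bounded functionals on the fiber algebras, with no purity or traciality available or required.
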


The argument goes along the following line.
The well-known Riesz(-Markov-Kakutani) theorem identifies
the space of linear functionals on algebra $A=C(X)$ and the
space of regular measures on $X$. To prove the weak twisted Burnside
theorem we first obtain a generalization of this theorem to the
case of non-commutative $C^*$-algebra $A$ via Dauns-Hofmann
sectional representation theorem. The corresponding measures
on Glimm spectrum are functional-valued.
In extreme situation this theorem is tautological, but for
group $C^*$-algebras of discrete groups in many cases one
obtains some new tool for counting twisted conjugacy classes.

\medskip

Keeping in mind that for hyperbolic groups $R(\phi)$ is always infinite
while in the "opposite" case the twisted Burnside theorem is proved, we
can formulate the following conjecture, which is in fact a program of
further actions.

\begin{conj}
There exists a class of groups $\mathcal G$ such that
\begin{itemize}
\item for any group
$G\not\in \mathcal G$ and any automorphism $\phi:G\to G$ the
Reidemeister number $R(\phi)$ is always infinite,

\item for any group $G\in \mathcal G$ there exists a subset
of ideals $\mathcal{M}\ss \Prim\, C^*(G)$ such that
its points are separated and $R(\phi)$ coincides with the number
of fixed points of $\wh\phi$ on $\mathcal M$ supposing one of these numbers
to be finite.
\end{itemize}
\end{conj}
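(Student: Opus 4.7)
The final statement is a conjecture framing a research program, so I propose a plan split along the two bullets rather than a single short argument.

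For the candidate class I would start with $\mathcal{G}$ being finitely generated groups whose representation theory detects twisted conjugacy through its finite-dimensional part, i.e.\ groups for which the finite-dimensional part of $\wh G$ already separates Reidemeister classes. The almost polycyclic case treated earlier in the paper gives a working prototype, and one would then try to enlarge $\mathcal{G}$ toward all finitely generated groups of polynomial growth, and further toward virtually solvable groups of finite Hirsch length. Any workable $\mathcal{G}$ should be stable under finite extensions, contain all finitely generated nilpotent groups, and exclude every known $R_\infty$-group.

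For the first bullet the strategy is to produce, for each $G\notin\mathcal{G}$, a uniform geometric mechanism forcing $R(\phi)=\infty$ for every automorphism. The templates are non-elementary hyperbolic groups (where $\phi$-fixed points on the Gromov boundary generate infinitely many twisted classes) and the Baumslag--Solitar case established later in the paper. I would try to isolate a common feature --- a non-elementary action on a tree or hyperbolic space, a quotient with exponential distortion, or a characteristic free subgroup --- and exploit its $\Aut(G)$-invariance to force property $R_\infty$. The delicate point is uniformity in $\phi$: the geometric invariant must survive the full action of $\Aut(G)$, not merely of inner automorphisms.

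For the second bullet I would refine Theorem~\ref{teo:weakburn}. That theorem equates $R_*(\phi)$ with fixed points on the Glimm spectrum, which can be strictly coarser than $\Prim\, C^*(G)$ precisely when non-separation occurs. The plan is to cut out $\mathcal{M}\subset\Prim\, C^*(G)$ as the largest subset on which the Dauns--Hofmann sectional decomposition already recovers \emph{all} twisted class functions, not only those in $B(G)$; for $G\in\mathcal{G}$ this $\mathcal{M}$ should be identifiable with the image in $\Prim$ of the finite-dimensional part of $\wh G$. Its pointwise separatedness, together with the noncommutative Riesz--Markov--Kakutani representation proved in the paper, should then upgrade $R_*(\phi)=S_*(\phi)$ to the honest equality $R(\phi)=\#\Fix(\wh\phi|_{\mathcal{M}})$.

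The main obstacle is the very definition of $\mathcal{G}$: it must be wide enough to contain the type~I, polycyclic, and polynomial-growth cases already understood, yet narrow enough to exclude every group with $R_\infty$. Exotic test cases --- Thompson's group $F$, Grigorchuk's group, wreath products of intermediate-growth type --- will be decisive in telling whether $\mathcal{G}$ can be characterized intrinsically (by growth, amenability, or subgroup structure) or only extrinsically, via the separation behaviour of $\Prim\, C^*(G)$.
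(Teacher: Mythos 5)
This statement is a conjecture, and the paper gives no proof of it---only a program: the strategy is declared to rest on the weak twisted Burnside theorem (Theorem~\ref{teo:weakburn}), with the set of maximal ideals (the Glimm-spectrum viewpoint) as the stated candidate for $\mathcal{M}$, the $R(\phi)=\infty$ results for hyperbolic and Baumslag--Solitar groups supporting the first bullet, and the RP/almost-polycyclic results (Theorem~\ref{teo:polyc}) supporting the second. Your plan is essentially this same program: you anchor the first bullet on the same geometric mechanisms forcing infinite Reidemeister numbers, and the second on refining the weak theorem by extracting a separated $\mathcal{M}\ss\Prim C^*(G)$ tied to the finite-dimensional part of the dual, which matches the paper's own remark that finite-dimensional fixed points are maximal and Hausdorff separated inside the Glimm spectrum.
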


One of candidates for $\mathcal M$ is the set of maximal ideals (let
us remind that the Glimm spectrum is in fact the space of maximal ideals
of the center of $C^*(G)$, cf. \cite{DaunsHofmann,ncrmkwb}).
The strategy of proof will be based on the weak theorem
\ref{teo:weakburn}.

We also consider a formulation of the main conjecture with
counting only finite-di\-men\-si\-o\-n\-al fixed points on the unitary
dual. In \cite{polyc} we prove this version for a very large class
of groups.

\begin{teo}\label{teo:polyc}
For a wide class of groups, which includes almost polycyclic groups
$($in particular, finitely generated groups of polynomial growth$)$
the Reidemeister number $R(\phi)$ coincides with the number $S_f(\phi)$ of
finite-dimensional $\wh\phi$-fixed points on the unitary
dual, if $R(\phi)$ is finite.
\footnote{After the acceptance of the present paper for publication, we obtained
a more strong version of this theorem using another method.
Namely, the end of the theorem sounds as
``if one of these numbers is finite''. For this purpose we introduce
the notion of $\phi$-conjugacy separable group (for any two $\phi$-conjugacy classes
there exists a $\phi$-respecting homomorphism of $G$ onto a finite group such
that the images of these classes do not intersect). A sufficient condition for
this property is conjugacy separability of the semidirect product of $G$ and $\Z$
by $\phi$. By the famous result of Remeslenikov and Formanek this is
the case for any almost polycyclic group.}
\end{teo}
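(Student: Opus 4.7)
The plan is to reduce both counts, $R(\phi)$ and $S_f(\phi)$, to a common finite quotient of $G$ and then invoke the classical Burnside theorem, which is the finite-group case of Theorem~\ref{teo:mainth1}.

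First I would produce a $\phi$-invariant normal subgroup $N\ss G$ of finite index such that the quotient map $p:G\to G/N$ induces a bijection between the Reidemeister classes of $(G,\phi)$ and those of the induced pair $(G/N,\bar\phi)$. Almost polycyclic groups are residually finite, so the $R(\phi)$ many twisted classes (assumed finite) can be pairwise separated in some finite quotient; intersecting finitely many $\phi$-translates of the kernel then produces a $\phi$-invariant $N$. Equivalently, one can invoke conjugacy separability of the semidirect product $G\rtimes_\phi\Z$ (Remeslennikov-Formanek, as flagged in the footnote), using the standard bijection between $\phi$-conjugacy classes of $G$ and ordinary conjugacy classes in the coset $Gt\ss G\rtimes_\phi\Z$, where $t$ generates the $\Z$-factor.

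Second, and this is the crucial step, I would show that every finite-dimensional $\wh\phi$-fixed irreducible unitary representation $\r:G\to \U(V)$ factors through $G/N$, possibly after shrinking $N$ to a smaller $\phi$-invariant finite-index normal subgroup. The image $\r(G)$ is a finitely generated linear group, hence residually finite by Mal'cev's theorem. The fixed-point condition $\r\circ\phi\simeq\r$ furnishes an intertwining unitary $A$ and extends $\r$ to $G\rtimes_\phi\Z$; an induction on the Hirsch length of $G$, combined with the finiteness of $R(\phi)$, forces $\Ker\r$ to have finite index and bounds the number of such $\r$. A single finite-index $\phi$-invariant $N$ then accommodates all of them simultaneously.

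Finally, both $R(\phi)$ and $S_f(\phi)$ equal the corresponding quantities on the finite quotient $(G/N,\bar\phi)$, so the classical Burnside theorem --- Theorem~\ref{teo:mainth1} applied to the finite (hence type~I) group $G/N$ --- yields the equality.

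The main obstacle is the second step: producing a uniform finite-index $\phi$-invariant subgroup $N$ through which all $\wh\phi$-fixed finite-dimensional irreducibles factor. A priori their dimensions are unbounded and there is no uniform control on their kernels, so the assumption $R(\phi)<\infty$ has to be leveraged simultaneously to bound both the number of such $\r$ and the depth of the quotient realizing them. This is exactly where the almost polycyclic hypothesis bites, and it clarifies why the statement must fail for less rigid classes such as the Baumslag-Solitar groups discussed later in the paper.
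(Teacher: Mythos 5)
Your overall strategy (reduce both counts to a finite quotient, then quote the finite Burnside theorem) is coherent, but your second step --- that every finite-dimensional $\wh\phi$-fixed irreducible representation factors through a single finite-index $\phi$-invariant subgroup $N$ --- is exactly the hard point, and the mechanism you propose for it does not work. Mal'cev's theorem gives residual finiteness of the linear group $\r(G)$, which is of no help here: what you need is that $\r(G)$ is \emph{finite}, and residual finiteness of an infinite group says nothing toward that. Likewise ``an induction on the Hirsch length \dots forces $\Ker\r$ to have finite index'' is a gesture, not an argument; you correctly flag this step as the main obstacle, but you leave it unresolved, so the proof is incomplete precisely where it needs to be complete. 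There is also a faulty inference in your first step: residual finiteness by itself does not let you separate two \emph{twisted conjugacy classes} (these are infinite subsets) in a finite quotient --- that is the strictly stronger property of $\phi$-conjugacy separability. Your fallback via Remeslennikov--Formanek conjugacy separability of $G\rtimes_\phi\Z$ is a legitimate external theorem, but note that it is exactly the ``other method'' which the footnote says was obtained only after the paper's acceptance; it is not what this paper does.

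The paper arranges its proof so that your problematic step is never needed. It first establishes the property RP (Definition \ref{dfn:Rperiodi}) for almost polycyclic groups by an induction over group extensions (Theorems \ref{teo:RPFCexten}, \ref{teo:polyciclRP}, \ref{teo:RPalmostpolyc}), with no appeal to conjugacy separability: RP says that the characteristic functions of the Reidemeister classes factor through some finite quotient $W$. This yields $R(\phi)\le S_f(\phi)$ in Theorem \ref{teo:twburnsforRP}, since each such characteristic function is then a linear combination of coefficients of finitely many finite-dimensional irreducibles, which are forced to be $\wh\phi$-fixed. For the reverse inequality the paper proves nothing about kernels of fixed representations: each fixed point $\r$ with unitary intertwiner $S$ contributes the twisted-invariant coefficient $g\mapsto\Tr(S\r(g))$, which lies in the $R(\phi)$-dimensional space of functions constant on Reidemeister classes, and coefficients of inequivalent finite-dimensional irreducibles are linearly independent (Peter--Weyl applied to the universal compact group $\Sigma$, using $\wh G_f=\wh\Sigma$); hence $S_f(\phi)\le R(\phi)$. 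Incidentally, your step two is repairable once step one is granted, but by a trace argument rather than by Hirsch length: the coefficient $\Tr(S\r(\cdot))$ is twisted invariant, hence constant on cosets of $N$, so $\Tr(S\r(g)\r(h))=\Tr(S\r(g))$ for all $g\in G$, $h\in N$; since $\r(G)$ spans the full matrix algebra by irreducibility, this forces $\r(h)=\Id$, i.e.\ $\r$ factors through $G/N$. The almost polycyclic hypothesis enters only in producing the finite quotient (the paper's RP property), not in any induction controlling the fixed representations themselves.
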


This version of the main conjecture (with the consideration
of only finite-dimensional representations)
has some counter-examples. One of them, coming
from D.~Osin's group \cite{Osin} we discuss after the mentioned
theorem in Section \ref{sec:polyc}. Also we propose there some
possible formulation of the conjecture appropriate for residually
finite groups.

Let us remark, that in some cases the weak twisted Burnside theorem
easily implies the twisted Burnside theorem (in particular, in the form with
finite-dimensional representations). For example, we will show
directly that $R(\phi)=R_*(\phi)$ in Abelian case. On the other
hand, the unitary dual coincides with the Glimm spectrum.
Slightly more complicated argument is valid for some
more general groups covered by Theorem \ref{teo:polyc}, in particular,
the Heisenberg group. More precisely, it is possible to extract from
the first part of the proof of Theorem \ref{teo:polyc} (see below
Section \ref{sec:polyc}) that $R(\phi)=R_*(\phi)$.
Moreover, characteristic functions of Reidemeister classes are
related to finite dimensional representations, which are maximal and
Hausdorff separated. Hence, the finite-dimensional fixed points
form a part of generalized fixed points on Glimm spectrum and
$S_*(\phi)\ge S_f(\phi)$. In fact, we have an equality here, because
if there exists a functional coming from some other generalized fixed
point on Glimm spectrum, it cannot be a coefficient of a finite-dimensional
representation because Glimm spectrum is Hausdorff separated while all
finite-dimensional (generalized) fixed points are already counted in $S_f(\phi)$.
The details concerning the relation between separateness and linear
independence are contained in Section \ref{sec:polyc}.

\medskip
The interest in twisted conjugacy relations has its origins, in particular,
in the Nielsen-Reidemeister fixed point theory (see, e.g. \cite{Jiang,FelshB}),
in Selberg theory (see, eg. \cite{Shokra,Arthur}),
and  Algebraic Geometry (see, e.g. \cite{Groth}).

The congruences give some necessary conditions for the realization problem
for Reidemeister numbers in topological dynamics.
The relations with Selberg theory will be presented in a forthcoming paper.

Let us remark that it is known that the Reidemeister number of an
endomorphism of a finitely generated Abelian group is
finite iff $1$ is not in the
spectrum of the restriction of this endomorphism to the free part of the group
(see, e.g. \cite{Jiang}).  The Reidemeister number
is infinite for any automorphism of a  non-elementary
Gromov hyperbolic group
\cite{FelPOMI}.

\medskip
To make the presentation more detailed and transparent we start
from a new approach (E.T.) for Abelian (Section
\ref{sec:abelcase}) and compact (Section \ref{sec:compcase})
groups. Only after that we develop this approach and prove the
main theorem for finitely generated groups of type I \cite{FelTro}
in Section \ref{sec:typeI}. A discussion of some examples leading
to conjectures is the subject of Section \ref{sec:exampdisc}.  In Section
\ref{sec:vershik} we present the mentioned important example for
type II${}_1$ groups \cite{FelTroVer}. After that we pass to the
demonstration of the weak twisted Burnside theorem in Section
\ref{sec:invfunsme}. For this purpose we present in Section
\ref{sec:funstmeas} a non-commutative version of
Riesz-Markov-Kakutani theorem \cite{ncrmkwb} (some necessary
information on operator fields is collected in Section
\ref{sec:algfiel}).
In Section \ref{sec:polyc} we discuss almost polycyclic
groups and related matter.
Section \ref{sec:BaumSolit} is devoted to
the discussion of Baumslag-Solitar groups \cite{FelGon}.

\medskip\noindent
{\bf Acknowledgement.} We would like to thank the Max Planck
Institute for Mathematics in Bonn for its kind support and
hospitality while the most part of this work has been completed.
We are also indebted to MPI and organizers of the
Workshops on Noncommutative Geometry and Number Theory I, II (Bonn,
August 2003 and June 2004)
where the results of this paper were presented.

The first author thanks the Research Institute in Mathematical Sciences
in Kyoto and Texas A \& M University
for the possibility of the present research during his visit there.

The authors are grateful to
V.~Balantsev,
G.~Banaszak,
M.~B.~Bekka,
B.~Bowditch,
R.~Grigorchuk,
R.~Hill,
V.~Kaimanovich,
V.~Ma\-nui\-lov,
A.~Mish\-che\-n\-ko,
A.~Rosenberg,
M.~Sapir,
A.~Shtern,
L.~Vainerman,
A.~Vershik
for helpful discussions, and to the referee for valuable suggestions.

\section{Abelian case}\label{sec:abelcase}

Let $\phi$ be an automorphism of an Abelian group $G$.

\begin{lem}\label{lem:charabelclassred}
The twisted conjugacy class $H$ of $e$ is a subgroup.
The other ones  are cosets $gH$.
\end{lem}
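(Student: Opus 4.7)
The plan is to use Abelianness to rewrite the twisted conjugacy relation in a multiplicative form that makes the subgroup/coset structure manifest. Since $G$ is Abelian, the defining relation $x' = g x \phi(g^{-1})$ can be commuted as
$$
x' = x \cdot g \phi(g^{-1}),
$$
so the class of any $x$ is $x \cdot H$, where $H = \{g\phi(g^{-1}) : g\in G\}$ is the class of $e$. Thus the statement reduces to proving that this particular set $H$ is a subgroup of $G$.

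For the subgroup property, I would observe that the map $\psi: G \to G$, $\psi(g) = g\phi(g^{-1})$, is a group homomorphism precisely because $G$ is Abelian: using commutativity together with the fact that $\phi$ is a homomorphism,
$$
\psi(g_1 g_2) = g_1 g_2 \phi(g_2^{-1}) \phi(g_1^{-1}) = \bigl(g_1 \phi(g_1^{-1})\bigr)\bigl(g_2 \phi(g_2^{-1})\bigr) = \psi(g_1)\psi(g_2).
$$
Hence $H = \Im \psi$ is a subgroup, automatically containing $e$ and closed under inverses (with $\psi(g)^{-1} = \psi(g^{-1})$).

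Finally, the coset description of the other classes is immediate from the rewriting above: for each $x \in G$,
$$
\{x\}_\phi = \{x \cdot g \phi(g^{-1}) : g\in G\} = x H,
$$
so distinct twisted conjugacy classes are exactly the distinct cosets of $H$ in $G$. I do not foresee any real obstacle: the only place where the argument genuinely uses a hypothesis is in verifying that $\psi$ is a homomorphism, which fails in general and is precisely where commutativity enters. In particular, this lemma shows that in the Abelian setting $R(\phi) = [G : H] = \#\Coker(\Id - \phi)$ (interpreting the group additively), which is the starting point for the subsequent analysis in this section.
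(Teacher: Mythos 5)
Your proposal is correct and takes essentially the same route as the paper: your verification that $\psi(g)=g\phi(g^{-1})$ is a homomorphism is literally the paper's computation showing $H$ is closed under products (the paper checks inverses separately, which your ``image of a homomorphism'' packaging makes automatic), and your rewriting $gx\phi(g^{-1})=x\cdot g\phi(g^{-1})$ plays the same role as the paper's observation that left multiplication by $g$ sends $\phi$-conjugate pairs to $\phi$-conjugate pairs. No gaps.
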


\begin{proof}
The first statement follows from the equalities
$$
h\phi(h^{-1}) g\phi(g^{-1})=gh \phi((gh)^{-1},\quad
(h\phi(h^{-1}))^{-1}=\phi(h) h^{-1}=h^{-1} \phi(h).
$$
For the second statement suppose $a\sim b$, i.e. $b=ha\phi(h^{-1})$. Then
$$
gb=gha\phi(h^{-1})=h(ga)\phi(h^{-1}), \qquad gb\sim ga.
$$
\end{proof}

\begin{lem}
Suppose, $u_1,u_2\in G$, $\chi_H$ is the
characteristic function of $H$ as a set. Then
$$
\chi_H(u_1 u_2^{-1})=\left\{
\begin{array}{ll}
1,& \mbox{ if } u_1,u_2 \mbox{ are in one coset },\\
0, & \mbox{ otherwise }.
\end{array}
\right.
$$
\end{lem}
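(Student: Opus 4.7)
The plan is to reduce the claim directly to the coset structure established in Lemma \ref{lem:charabelclassred}. By that lemma, the twisted conjugacy classes are exactly the cosets of the subgroup $H$, so saying that $u_1$ and $u_2$ lie in one (twisted conjugacy) class is the same as saying $u_1 H = u_2 H$.

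First I would translate the condition ``$u_1,u_2$ belong to one coset of $H$'' into the algebraic condition $u_1 u_2^{-1}\in H$. In an Abelian group this is immediate: $u_1 H = u_2 H$ iff $u_2^{-1} u_1 \in H$ iff $u_1 u_2^{-1}\in H$ (using commutativity, or noting that $H$ is normal since $G$ is Abelian). Next I would apply the definition of the characteristic function: $\chi_H(g)=1$ when $g\in H$ and $0$ otherwise. Plugging in $g=u_1 u_2^{-1}$ then gives exactly the two cases stated in the lemma.

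The proof is essentially a one-line chain of equivalences, so there is no real obstacle; the only thing to be careful about is invoking the preceding lemma so that one knows $H$ is an actual subgroup (not merely a subset), which legitimizes talking about ``cosets $gH$'' and using the standard characterization $u_1H=u_2H \Leftrightarrow u_1 u_2^{-1}\in H$.
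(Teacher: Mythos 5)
Your proof is correct and follows essentially the same route as the paper: both arguments reduce the claim to the standard coset criterion $u_1H=u_2H \Leftrightarrow u_1u_2^{-1}\in H$ (the paper just unfolds it by writing $u_1=g_1h_1$, $u_2=g_2h_2$ and using commutativity to see $u_1u_2^{-1}\in g_1g_2^{-1}H$). Your remark about first invoking the preceding lemma to know $H$ is a genuine subgroup matches the paper's setup exactly.
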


\begin{proof}
Suppose, $u_1\in g_1 H$, $u_2\in g_2 H$, hence, $u_1=g_1 h_1$, $u_2=g_2 h_2$.
Then
$$
u_1 u_2^{-1}=g_1 h_1 h_2^{-1} g_2^{-1} \in g_1 g_2^{-1} H.
$$
Thus, $\chi_H(u_1 u_2^{-1})=1$ if and only if
$g_1 g_2^{-1}\in  H$ and $u_1$ and $u_2$
are in the same class. Otherwise it is 0.
\end{proof}

The following Lemma is well known.

\begin{lem}
For any subgroup $H$ the function $\chi_H$ is of positive type.
\end{lem}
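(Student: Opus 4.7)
The plan is to unpack the definition of a positive-type function and reduce the required non-negativity to a sum of squares, using the previous lemma as the main computational input. Recall that $f : G \to \C$ is of positive type if for every finite collection $u_1,\dots,u_n \in G$ and every choice of scalars $c_1,\dots,c_n \in \C$ one has
$$
\sum_{i,j=1}^{n} c_i \ov{c_j}\, f(u_i u_j^{-1}) \ge 0.
$$
So the task reduces to checking this inequality for $f = \chi_H$.

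The key step is to use the previous lemma, which tells us that $\chi_H(u_i u_j^{-1})$ equals $1$ exactly when $u_i$ and $u_j$ lie in the same coset of $H$, and is zero otherwise. Thus the double sum splits according to the partition of $\{1,\dots,n\}$ into coset classes: letting $\{S_k\}$ denote the blocks of indices falling into the same coset, we get
$$
\sum_{i,j=1}^{n} c_i \ov{c_j}\, \chi_H(u_i u_j^{-1}) \;=\; \sum_{k} \sum_{i,j \in S_k} c_i \ov{c_j} \;=\; \sum_k \Bigl|\sum_{i \in S_k} c_i\Bigr|^2 \;\ge\; 0.
$$
This shows $\chi_H$ is positive definite.

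There is no real obstacle here; the only subtlety is conceptual, namely identifying the bilinear form as block-diagonal with rank-one blocks on each coset. One could alternatively observe that $\chi_H$ is the diagonal matrix coefficient $\langle \pi(g)\delta_{eH}, \delta_{eH}\rangle$ of the quasi-regular representation $\pi$ on $\ell^2(G/H)$ (with $\pi$ defined via $\phi$-independent left translation, since the statement does not involve $\phi$), which automatically gives positivity; but the direct sum-of-squares argument above is the most elementary and self-contained, so I would present it in that form.
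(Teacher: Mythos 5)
Your proof is correct and is essentially the paper's own argument: both use the previous lemma to partition the indices by cosets and then observe that the resulting all-ones blocks give a positive semi-definite form, which you verify explicitly via the identity $\sum_{i,j\in S_k} c_i\ov{c_j} = \bigl|\sum_{i\in S_k} c_i\bigr|^2$. The paper merely states that the block-diagonal matrix of unit blocks is positive semi-definite, so your sum-of-squares computation is just the spelled-out version of the same step.
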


\begin{proof}
Let us take arbitrary elements $u_1,u_2,\dots, u_n$ of $G$.
Let us reenumerate them
in such a way that some first are in $g_1 H$, the next ones are  in $g_2 H$,
and so on, till
$g_m H$, where $g_j H$ are different cosets. By the previous Lemma the matrix
$\|p_{it}\|:=\|\chi_H(u_i u_t^{-1})\| $ is block-diagonal with
square blocks formed by
units. These blocks, and consequently  the whole  matrix are  positively
semi-defined.
\end{proof}

\begin{lem}
In the Abelian case characteristic functions of twisted conjugacy classes
belong to
the Fourier-Stieltjes algebra $B(G)=(C^*(G))^*$.
\end{lem}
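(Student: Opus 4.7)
The plan is to reduce the claim for an arbitrary twisted conjugacy class to the already-established facts about the distinguished class $H = \{e\}_\phi$. By Lemma~\ref{lem:charabelclassred}, any twisted conjugacy class has the form $gH$ for some $g\in G$, and its characteristic function is a left translate of $\chi_H$:
$$
\chi_{gH}(x)=\chi_H(g^{-1}x),\qquad x\in G.
$$
So it suffices to place $\chi_H$ in $B(G)$ and to check that $B(G)$ is translation-invariant.

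For the first point I would invoke the preceding lemma (which states that $\chi_H$ is positive-definite) together with the standard fact that every function of positive type on $G$ belongs to $B(G)$: the GNS construction produces a cyclic unitary representation $(\pi,\H,\xi)$ with $\chi_H(x)=\<\pi(x)\xi,\xi\>$, exhibiting $\chi_H$ as a matrix coefficient of a unitary representation and hence as an element of $B(G)=(C^*(G))^*$.

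For the second point I would simply compute, with $\pi$ as above,
$$
\chi_{gH}(x)=\<\pi(g^{-1}x)\xi,\xi\>=\<\pi(x)\xi,\pi(g)\xi\>,
$$
which is still a matrix coefficient of the same unitary representation $\pi$ (now with vectors $\xi$ and $\pi(g)\xi$). Therefore $\chi_{gH}\in B(G)$.

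I do not anticipate a genuine obstacle here: the three ingredients — the coset description of twisted classes, the positive-definiteness of $\chi_H$, and the translation invariance of $B(G)$ via the matrix-coefficient characterization — are already in place, and the argument is essentially bookkeeping. The only mild point of care is that the GNS representation really is unitary on the discrete group $G$, which is automatic because $\chi_H$ is real-valued with $\chi_H(e)=1$.
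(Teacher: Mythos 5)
Your proposal is correct, and its skeleton coincides with the paper's: both reduce, via Lemma~\ref{lem:charabelclassred}, to showing that $\chi_H$ and its left translates lie in $B(G)$. The difference lies in how that membership is discharged. The paper simply cites Corollary (2.19) of Eymard \cite{Eym} as a black box; you instead unpack it into a self-contained argument: the paper's preceding lemma gives positive-definiteness of $\chi_H$, the GNS construction realizes $\chi_H$ as a matrix coefficient $\<\pi(\cdot)\xi,\xi\>$ of a unitary representation, and the identity $\chi_{gH}(x)=\chi_H(g^{-1}x)=\<\pi(x)\xi,\pi(g)\xi\>$ exhibits each translate as a coefficient of the same representation, hence an element of $B(G)$ under its standard characterization as the set of coefficient functions of unitary representations (equivalently, the linear span of positive-definite functions). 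What your route buys is twofold: it avoids an external citation, and it actually uses the positive-definiteness lemma that the paper proves immediately beforehand but then never invokes in its own proof of this statement. One cosmetic remark: your closing concern about the GNS representation being unitary is superfluous --- positive-definiteness alone guarantees a unitary cyclic representation (together with the automatic bound $|\chi_H(x)|\le\chi_H(e)$); the normalization $\chi_H(e)=1$ only makes $\xi$ a unit vector, which is not needed for membership in $B(G)$.
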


\begin{proof}
By Lemma \ref{lem:charabelclassred}
in this case the characteristic functions of twisted conjugacy
classes are the shifts
of the characteristic function of the class $H$ of $e$.
Hence, by Corollary (2.19) of \cite{Eym}, these characteristic functions are in
$B(G)$.
\end{proof}

Let us remark that there exists a natural isomorphism (Fourier transform)
$$
u\mapsto\wh u,\qquad C^*(G)=C_r^*(G)\cong C(\wh G),\qquad
\wh g(\rho):=\rho(g),
$$
(this is a  number because irreducible representations of an Abelian group
are 1-dimen\-sional). In fact, it is better to look (for what follows) at an  algebra $C(\wh G)$ as an algebra of continuous sections  of a bundle of 1-dimensional matrix algebras.
over $\wh G$.

Our characteristic functions, being in $B(G)=(C^*(G))^*$ in this case,
are mapped to the functionals on $C(\wh G)$  which, by the
Riesz-Markov-Kakutani theorem,
are measures on $\wh G$. Which of these measures  are invariant under
the induced (twisted) action of $G$ ? Let us remark,
that an invariant non-trivial
functional gives rise to at least one invariant space -- its kernel.

Let us remark, that convolution under the Fourier transform becomes
point-wise multiplication. More precisely, the twisted action, for example,
is defined as
$$
g[f](\rho)=\rho(g)f(\rho)\rho(\phi(g^{-1})),\qquad \rho\in\wh G,\quad
g\in G,\quad f\in C(\wh G).
$$

There are 2 possibilities  for the twisted action of $G$ on the representation
 algebra $A_\rho \cong \C$: 1) the linear span of the orbit of $1 \in A_\rho$
is equal to all $A_\rho$, 2) and  the opposite case (the action is trivial).

The second case means that the space of intertwining operators between
 $A_\rho$ and $A_{\wh \phi \rho}$ equals $\C$, and  $\rho$ is a fixed point of
the action $\wh \phi:\wh G\to \wh G$. In the first case this is the opposite
situation.

If we have a finite number of such fixed
points, then the space of twisted invariant measures is just the space of
measures
concentrated in these points. Indeed, let us describe the action of $G$
on measures in
more detail.

\begin{lem}
For any  Borel set $E$ one has
$g[\mu](E)=\int_E g[1]\,d\mu$.
\end{lem}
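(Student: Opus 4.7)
The plan is to unwind the duality between the given twisted action on $C(\wh G)$ and the dual action on measures, using the key observation that the twisted action on $C(\wh G)$ is simply pointwise multiplication by the continuous function $g[1]$. Indeed, since every $\rho\in\wh G$ is a character of the Abelian group $G$, the displayed formula just above the lemma can be rewritten
\[
g[f](\rho)=\rho(g)\,f(\rho)\,\rho(\phi(g^{-1}))=\rho(g\phi(g^{-1}))\,f(\rho)=g[1](\rho)\,f(\rho),
\]
so that $g[f]=g[1]\cdot f$ as functions on $\wh G$.

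Next, I would recall that the dual action of $G$ on measures (identified with $(C(\wh G))^*\cong B(G)$ via Riesz--Markov--Kakutani and the Fourier transform) is defined by the equivariance relation
\[
\int_{\wh G} f\,d(g[\mu])=\int_{\wh G} g[f]\,d\mu,\qquad f\in C(\wh G).
\]
This is precisely the convention that makes functionals coming from twisted-invariant functions on $G$ (such as the characteristic functions $\chi_H$ of twisted conjugacy classes, already shown to lie in $B(G)$) into fixed points of the action on $B(G)$; this compatibility can be checked directly from the twisted invariance of $\chi_H$ on $G$ and the change of variable $y=g^{-1}x\phi(g)$.

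Substituting $g[f]=g[1]\cdot f$ into the equivariance relation, one sees that the regular Borel measure $g[\mu]$ agrees on $C(\wh G)$ with the measure of density $g[1]$ with respect to $\mu$; by the uniqueness part of the Riesz--Markov--Kakutani theorem these two measures coincide on all Borel sets, yielding $g[\mu](E)=\int_E g[1]\,d\mu$. The argument is a direct computation, and I do not anticipate any genuine obstacle: the only point requiring a line of care is the passage from the equivariance identity on continuous functions to an equality of measures on arbitrary Borel sets, which is handled by the uniqueness of the representing measure (alternatively, by a routine monotone class argument, exploiting boundedness of $g[1]$).
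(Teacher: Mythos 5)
Your proof is correct and follows essentially the same route as the paper's: both rest on the observation that the twisted action on $C(\wh G)$ is pointwise multiplication by the unimodular continuous function $g[1]$, so that the dual action on measures is multiplication by that density. The paper compresses the transfer to Borel sets into the one-line remark that restriction of measures commutes with a pointwise action, whereas you justify that same step explicitly via the uniqueness part of the Riesz--Markov--Kakutani theorem, which is a more careful rendering of the identical argument.
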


\begin{proof}
The restriction of measure to any Borel set commutes with the action of  $G$,
since the last is point wise on $C(\wh G)$.
For any  Borel set $E$ one has
$$
g[\mu](E)= \int_E 1\, dg[\mu]= \int_E g[1] \,d\mu.
$$
\end{proof}

Hence, if $\mu$ is twisted invariant, then for any  Borel set $E$ and any
$g\in G$ one has
$$
\int_E (1-g[1])\,d\mu =0.
$$

\begin{lem}
Suppose, $f\in C(X)$, where $X$ is a compact Hausdorff space, and $\mu$
is a regular Borel measure on $X$, i.e. a functional on $C(X)$.
Suppose, for any
Borel set $E\subset X$ one has $\int_E f\, d\mu=0$.
Then $\mu (h)=0$ for any $h\in C(X)$
such that $f(x)=0$ implies $h(x)=0$. I.e. $\mu$ is concentrated off the
interior
of $\rm supp\,f$.
\end{lem}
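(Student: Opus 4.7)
The plan is to reduce the statement to the assertion that $f=0$ almost everywhere with respect to $|\mu|$; once this is established, continuity of $h$ combined with the pointwise domination $\{f=0\}\subset\{h=0\}$ forces $h=0$ $|\mu|$-a.e., so that $\mu(h)=0$ follows at once.

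First I would introduce the complex Borel measure $\nu$ on $X$ defined by $\nu(E):=\int_E f\,d\mu$, so that $\nu$ has density $f$ with respect to $\mu$; the hypothesis of the lemma is precisely $\nu\equiv 0$. To convert this into vanishing information about $f$ with respect to $|\mu|$, I would invoke the standard identity $d|\nu|=|f|\,d|\mu|$, which follows from the polar decomposition $d\mu=g\,d|\mu|$ with $|g|=1$ $|\mu|$-a.e. From $\nu=0$ one gets $|\nu|=0$, whence $\int_X|f|\,d|\mu|=0$, and therefore $|f|=0$ $|\mu|$-almost everywhere. In other words, $|\mu|$ is concentrated on the closed zero set $Z:=\{x\in X:f(x)=0\}$.

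Second, given any $h\in C(X)$ that vanishes on $Z$, the function $h$ is zero $|\mu|$-almost everywhere, so that
\[
|\mu(h)|\;\le\;\int_X|h|\,d|\mu|\;=\;\int_Z|h|\,d|\mu|\;=\;0,
\]
as required. The concluding phrase of the lemma---that $\mu$ is concentrated off the interior of $\supp f$---is then a reformulation: the open set $\{f\neq 0\}$ lies in $X\setminus Z$ and hence has zero $|\mu|$-mass, and by regularity of $\mu$ this is equivalent to $\mu$ annihilating every continuous function with support in $\{f\neq 0\}$, which is exactly what the main claim produces.

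The only nontrivial ingredient is the density identity $d|\nu|=|f|\,d|\mu|$; for a complex measure this is standard via polar decomposition, and to remain fully elementary one can instead split $\mu$ into its real and imaginary parts, apply Jordan decomposition to each, and use the classical signed-measure formula for the total variation of a measure with a given density. Nothing beyond regularity of $\mu$---built into its identification as a continuous linear functional on $C(X)$---plays any role, and I therefore do not expect a substantive obstacle: the proof is a short measure-theoretic extraction followed by a one-line continuity argument.
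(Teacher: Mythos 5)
Your proof is correct, but it takes a genuinely different route from the paper's. You work on the measure side: setting $\nu(E)=\int_E f\,d\mu$, you use the identity $d|\nu|=|f|\,d|\mu|$ (polar decomposition / Radon--Nikodym) to convert the hypothesis $\nu\equiv 0$ into $\int_X|f|\,d|\mu|=0$, hence $|f|=0$ $|\mu|$-a.e., i.e.\ the total variation $|\mu|$ is genuinely concentrated on the closed zero set $Z=\{f=0\}$; the claim $\mu(h)=0$ is then immediate. The paper instead works on the function side: it invokes the standard fact that the functions $fh$ are dense in the closed ideal $\{h\in C(X)\: : \: h|_Z=0\}$, reduces the claim to $\mu(fh)=0$, and proves that by approximating $h$ uniformly by a simple function $h'=\sum_i a_i\chi_{E_i}$, for which $\mu(fh')=\sum_i a_i\int_{E_i}f\,d\mu=0$ termwise by hypothesis. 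Each argument rests on exactly one standard unproved ingredient: yours on the total-variation formula for a measure with a given density, the paper's on the description of the closure of the principal ideal $f\cdot C(X)$ inside $C(X)$. What your route buys is a stronger and cleaner intermediate conclusion---concentration of $|\mu|$ on $Z$ as a measure, not merely annihilation of continuous functions vanishing on $Z$---which is in fact the form in which the lemma is used afterwards (twisted-invariant measures sit at the fixed points of $\wh\phi$); the paper's route buys elementarity, needing only linearity, boundedness, and the hypothesis applied to the sets $E_i$. One small caution: your closing aside that one could stay ``fully elementary'' by splitting $\mu$ into real and imaginary parts and applying Jordan decompositions does not decouple as cleanly as stated when $f$ is complex-valued (which it is in the paper's application, $f=1-g[1]$); it can be patched, but the polar-decomposition argument you actually gave should be regarded as the proof, and it is sound as it stands.
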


\begin{proof}
Since the functions of the form $fh$ are dense in the space of the
referred to above  $h$'s,
it is sufficient to verify the statement for $fh$. Let us choose an
arbitrary $\e>0$
and a simple function $h'=\sum\limits_{i=1}^n a_i \chi_{E_i}$
such that $|\mu(fh')-\mu(fh)|<\e$.
Then
$$
\mu(fh')=\sum_{i=1}^n \int_{E_i} a_i f \,d\mu =\sum_{i=1}^n a_i
\int_{E_i}f \,d\mu =0.
$$
Since $\e$ is an arbitrary one, we are done.
\end{proof}

Applying this lemma to a twisted invariant measure $\mu$ and $f=1-g[1]$
we obtain that $\mu$ is concentrated at our finite number of fixed
points of $\wh\phi$, because outside of them $f\ne 0$.

If we have an infinite number of fixed points, then the space is
infinite-dimensional
(we have an infinite number of measures
concentrated in finite number of points, each time different)
and Reidemeister number is infinite as well. So, we are done.

\section{Compact case}\label{sec:compcase}

Let $G$ be a compact Hausdorff
group, hence $\wh G$ is a discrete space. Then
 $C^*(G)=\oplus M_i$, where $M_i$ are the matrix algebras of
irreducible representations.
The infinite sum is in the following sense:
$$
C^*(G)=\{ f_i\}, i\in \{ 1,2,3,...\}=\hat G, f_i\in M_i,
\| f_i \| \to 0 (i\to \infty).
$$
When $G$ is finite and $\wh G$ is finite this is exactly Peter-Weyl theorem.

A characteristic function of a twisted class
is a functional on $C^*(G)$. For a  finite group it is evident,   for a
general compact group  it is necessary to verify only the measurability of
the twisted class
with the respect to Haar measure, i.e. that twisted class is Borel.
For a compact $G$,
the twisted conjugacy classes being orbits of twisted action are compact and
hence closed.

Under the identification it passes to a sequence
$\{ \f_i \} $, where $\f_i$ is a functional on $M_i$ (the properties
of convergence can be formulated, but they play no role at the moment).
The conditions of invariance are the following: for each $\r_i \in\wh G$
one has $g[\f_i]=\f_i$, i.e. for any $a\in M_i$ and any $g\in G$ one has
 $\f_i(\r_i(g) a \r_i(\phi(g^{-1})))=\f_i(a)$.

Let us recall  the following well-known fact.

\begin{lem}\label{lem:funcnamat}
Each functional on matrix algebra has form
$a\mapsto \Tr(ab)$ for a fixed matrix $b$.
\end{lem}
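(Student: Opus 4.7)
The plan is to exploit the fact that $M_n(\C)$ is finite-dimensional, so that its algebraic dual has the same dimension $n^2$, and then exhibit the map $b\mapsto(a\mapsto\Tr(ab))$ as a linear isomorphism from $M_n(\C)$ to $M_n(\C)^*$.

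First I would observe that the assignment
$\Phi: M_n(\C)\to M_n(\C)^*$, $\Phi(b)(a):=\Tr(ab)$, is linear by linearity of trace and of matrix multiplication. Since both source and target are $n^2$-dimensional, it suffices to check that $\Phi$ is injective. For this I would use the standard matrix units $E_{ij}$ (with a $1$ in position $(i,j)$ and zeros elsewhere). A direct computation gives $\Tr(E_{ij}\, b)=b_{ji}$, so if $\Phi(b)\equiv 0$ then every entry of $b$ vanishes, hence $b=0$.

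Equivalently, and perhaps more conceptually, one could invoke the Hilbert--Schmidt inner product $\langle a,c\rangle:=\Tr(ac^*)$ on $M_n(\C)$: by the finite-dimensional Riesz representation theorem every linear functional $\varphi$ is of the form $\varphi(a)=\Tr(ac^*)$ for a unique $c$, and setting $b:=c^*$ yields the desired representation. Either route is essentially routine; there is no real obstacle, since the dimensional bookkeeping does all the work.

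Finally, for explicit construction (which is useful for the application following this lemma), given a functional $\f$ on $M_n(\C)$ I would set $b_{ij}:=\f(E_{ji})$ and verify on an arbitrary $a=\sum a_{ij}E_{ij}$ that
\[
\f(a)=\sum_{i,j} a_{ij}\f(E_{ij})=\sum_{i,j} a_{ij}b_{ji}=\Tr(ab),
\]
which both proves existence and makes the correspondence $\f\leftrightarrow b$ completely explicit.
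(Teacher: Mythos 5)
Your proposal is correct and follows essentially the same route as the paper: both arguments rest on the $n^2$-dimensionality of $M_n(\C)^*$ together with the matrix units (the paper phrases this as linear independence of the entry functionals $a\mapsto \<ae_i,e_j\>$, which is the dual formulation of your injectivity of $b\mapsto\Tr(\cdot\, b)$), and your explicit formula $b_{ij}:=\f(E_{ji})$ is exactly the coefficient matrix $b$ appearing in the paper's final display.
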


\begin{proof} One has $\dim (M(n,\C))'=\dim (M(n,\C))=n\times n$
and looking
at matrices as at operators in $V$, $\dim V=n$, with base $e_i$,
one can remark that
functionals $a\mapsto \<a e_i, e_j\>$, $i,j=1,\dots, n$,
are linearly  independent. Hence,
any functional takes form
$$
a\mapsto \sum_{i,j} b^i_j \<a e_i, e_j\>=
\sum_{i,j} b^i_j a^j _i = \Tr(ba),\qquad
b:=\|b^i_j\|.
$$
\end{proof}

Now we can study invariant functionals:
$$
\Tr(b\r_i(g) a \r_i(\phi(g^{-1})))=\Tr(ba),\qquad \forall\,a,g,
$$
$$
\Tr((b-\r_i(\phi(g^{-1})) b \r_i(g))a)=0,\qquad \forall\,a,g,
$$
hence,
$$
b-\r_i(\phi(g^{-1})) b \r_i(g)=0,\qquad \forall\,g.
$$
Since $\r_i$ is irreducible, the dimension of the space of such $b$ is
1 if $\r_i$ is a fixed
point of $\wh\phi$ and 0 in the opposite case by the Schur lemma.
So, we are done.

\begin{rk}
In fact we are only interested in finite discrete case.
Indeed, for a compact $G$,
the twisted conjugacy classes being orbits of twisted action are
compact and hence closed.
If there is a finite number of them, then they are open as well.
Hence, the situation is more or less
reduced to a discrete group: quotient by the component of unity.
\end{rk}

\section{Extensions and Reidemeister classes}\label{sec:extens}

Let us denote by $\t_g:G\to G$ the automorphism $\t_g(\til g)=g\til g\,g^{-1}$
for $g\in G$. Its restriction on a normal subgroup we will denote by $\t_g$
as well.

\begin{lem}\label{lem:redklassed}
$\{g\}_\phi k=\{g\,k\}_{\t_{k^{-1}}\circ\phi}$.
\end{lem}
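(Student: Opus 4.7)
The plan is a direct computation: unwind both sides using the definitions of the twisted conjugacy class and of $\t_g$, and observe that the $k$-factor on the right can be inserted into the $\phi$-conjugation on the left without changing anything except the twisting automorphism.

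First I would spell out the left-hand side. By definition,
$$
\{g\}_\phi\,k = \{\,h\,g\,\phi(h^{-1}) : h \in G\,\}\cdot k = \{\,h\,g\,\phi(h^{-1})\,k : h \in G\,\}.
$$
This step is purely formal; no conjugation identities are used.

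Next I would rewrite the right-hand side. Setting $\psi := \t_{k^{-1}}\circ\phi$, one has $\psi(h^{-1}) = k^{-1}\phi(h^{-1})\,k$ by the definition of $\t_{k^{-1}}$. Substituting,
$$
\{g\,k\}_{\psi} = \{\,h\,(gk)\,\psi(h^{-1}) : h \in G\,\} = \{\,h\,g\,k\,k^{-1}\phi(h^{-1})\,k : h \in G\,\} = \{\,h\,g\,\phi(h^{-1})\,k : h \in G\,\},
$$
which coincides with the left-hand side.

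There is essentially no obstacle here; the only point requiring a little care is to keep the order of $k$ and $k^{-1}$ straight when applying $\t_{k^{-1}}$, and to make sure the cancellation $k\cdot k^{-1}$ is inside the product (not obstructed by $\phi$). Once that bookkeeping is done, the two sets of representatives are visibly identical, which proves the lemma.
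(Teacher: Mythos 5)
Your proof is correct and uses essentially the same computation as the paper: inserting $k\,k^{-1}$ to convert $\phi(h^{-1})\,k$ into $k\,(\t_{k^{-1}}\circ\phi)(h^{-1})$. The only cosmetic difference is that the paper verifies the two inclusions separately, whereas you note that both classes are images of the same parametrization $h\mapsto h\,g\,\phi(h^{-1})\,k$, which gives the set equality in one stroke.
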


\begin{proof}
Let $g'=f\,g\,\phi(f^{-1})$ be $\phi$-conjugate to $g$. Then
$$
g'\,k=f\,g\,\phi(f^{-1})\,k=f\,g\,k\,k^{-1}\,\phi(f^{-1})\,k
=f\,(g\,k)\,(\t_{k^{-1}}\circ\phi)(f^{-1}).
$$
Conversely, if $g'$ is $\t_{k^{-1}}\circ\phi$-conjugate to $g$, then
$$
g'\,k^{-1}=
f\,g\,(\t_{k^{-1}}\circ\phi)(f^{-1})k^{-1}=
f\,g\,k^{-1}\,\phi(f^{-1}).
$$
Hence a shift maps $\phi$-conjugacy classes onto classes related to
another automorphism.
\end{proof}

\begin{cor}\label{cor:rphiequaltaurphi}
$R(\phi)=R(\t_g \circ \phi)$.
\end{cor}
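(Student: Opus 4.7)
The plan is to derive the corollary directly from Lemma~\ref{lem:redklassed} by exhibiting an explicit bijection between the sets of Reidemeister classes of $\phi$ and of $\t_g\circ\phi$.

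First I would observe that Lemma~\ref{lem:redklassed} says right multiplication by a fixed element $k\in G$ sends the $\phi$-twisted conjugacy class $\{g\}_\phi$ to the $(\t_{k^{-1}}\circ\phi)$-twisted conjugacy class $\{gk\}_{\t_{k^{-1}}\circ\phi}$. So the map $R_k\colon G\to G$, $x\mapsto xk$, descends to a well-defined map between orbit spaces
\[
\bar R_k\colon \{\phi\text{-classes}\}\longrightarrow \{(\t_{k^{-1}}\circ\phi)\text{-classes}\},\qquad \{x\}_\phi\longmapsto \{xk\}_{\t_{k^{-1}}\circ\phi}.
\]
The second half of the proof of Lemma~\ref{lem:redklassed} shows that right multiplication by $k^{-1}$ sends $(\t_{k^{-1}}\circ\phi)$-classes to $\phi$-classes, so $\bar R_k$ has a two-sided inverse and is thus a bijection.

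To deduce the corollary, given $g\in G$ I would simply set $k:=g^{-1}$ in the preceding bijection. Then $\t_{k^{-1}}\circ\phi = \t_g\circ\phi$, and the bijection yields
\[
R(\phi) = \#\{\phi\text{-classes}\} = \#\{(\t_g\circ\phi)\text{-classes}\} = R(\t_g\circ\phi).
\]

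There is no real obstacle here; the only thing to be careful about is the bookkeeping of which conjugating element is used on which side (since the group need not be abelian), but this is already handled cleanly in the statement and proof of Lemma~\ref{lem:redklassed}. The corollary is really just a repackaging of that lemma at the level of orbit sets.
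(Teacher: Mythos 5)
Your proposal is correct and is essentially the paper's own argument: the paper states the corollary without further proof precisely because Lemma~\ref{lem:redklassed} already shows that right translation by $k$ carries $\phi$-classes bijectively onto $(\t_{k^{-1}}\circ\phi)$-classes, and your choice $k=g^{-1}$ is the intended specialization. Your explicit bookkeeping of the inverse map and the descent to orbit spaces just makes visible what the paper leaves implicit.
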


Consider a group extension respecting homomorphism $\phi$:
\begin{equation}\label{eq:extens}
 \xymatrix{
0\ar[r]&
H \ar[r]^i \ar[d]_{\phi'}&  G\ar[r]^p \ar[d]^{\phi} & G/H \ar[d]^{\ov{\phi}}
\ar[r]&0\\
0\ar[r]&H\ar[r]^i & G\ar[r]^p &G/H\ar[r]& 0,}
\end{equation}
where $H$ is a normal subgroup of $G$.
The argument below, especially
related the role of fixed points, has a partial intersection with
\cite{gowon,goncalves}.

First of all let us notice that the Reidemeister classes of $\phi$ in $G$
are mapped epimorphically on classes of $\ov\phi$ in $G/H$. Indeed,
\begin{equation}\label{eq:epiofclassforexs}
p(\til g) p(g) \ov\phi(p(\til g^{-1}))= p (\til g g \phi(\til g^{-1}).
\end{equation}
Suppose, $R(\phi)<\infty$. Then the previous remark implies
$R(\ov\phi)<\infty$. Consider a class $D=\{h\}_{\t_g\phi'}$, where
$\t_g(h):=g h g^{-1},$ $g\in G$, $h\in H$. The corresponding equivalence
relation is
\begin{equation}\label{eq:klasstaug}
h\sim \til h h g \phi'(\til h^{-1}) g^{-1}.
\end{equation}
Since $H$ is normal,
the automorphism $\t_g:H\to H$ is well defined.
We will denote
by $D$ the image $iD$ as well. By (\ref{eq:klasstaug})
the shift $Dg$ is a subset of $Hg$ is characterized by
\begin{equation}\label{eq:klasstaugsh}
h g\sim \til h (h g) \phi'(\til h^{-1}).
\end{equation}
Hence it is a subset of $\{hg\}_\phi\cap Hg$ and the partition
$Hg=\cup (\{h\}_{\t_g \phi' }) g$ is a subpartition of
$Hg=\cup ( Hg\cap \{hg\}_\phi).$

We need the following statements.

\begin{lem}\label{lem:ozenkacherezperiod}
Suppose, the extension {\rm (\ref{eq:extens})} satisfies
the following conditions:
\begin{enumerate}
    \item $\#\Fix\ov{\phi}=k <\infty$,
    \item $R(\phi)<\infty$.
\end{enumerate}
Then
\begin{equation}\label{eq:ozenreidcherpernonabel}
    R(\phi')\le k\cdot (R(\phi)-R(\ov\phi)+1).
\end{equation}

If $G/H$   is abelian, let $g_i$ be some elements with
$p(g_i)$ being representatives of all
different $\ov\phi$-conjugacy classes, $i=1,\dots,R(\ov\phi)$.
Then
\begin{equation}\label{eq:ozenreidcherperabel}
   \sum_{i=1}^{  R(\ov{\phi})} R(\t_{g_i}\phi')\le k\cdot R(\phi).
\end{equation}
\end{lem}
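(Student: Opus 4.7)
The plan is to relate $\t_g\phi'$-classes in $H$ to $\phi$-classes of $G$ via the action on a single coset $Hg$, exploiting the equivariance of the projection $p: G \to G/H$ already noted in (\ref{eq:epiofclassforexs}).

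First, (\ref{eq:epiofclassforexs}) shows that $p$ descends to a surjection from $\phi$-classes of $G$ onto $\ov\phi$-classes of $G/H$. Hence $R(\ov\phi) \le R(\phi) < \infty$, and since every one of the $R(\ov\phi)$ downstairs classes receives at least one upstairs class, the number of $\phi$-classes lying over any single $\ov\phi$-class is at most $R(\phi) - (R(\ov\phi) - 1) = R(\phi) - R(\ov\phi) + 1$.

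Next, for any $\bar g \in G/H$ with lift $g \in G$, set $S_{\bar g} := \{\bar x \in G/H : \bar x \bar g\, \ov\phi(\bar x^{-1}) = \bar g\}$, the stabilizer of $\bar g$ under the $\ov\phi$-twisted action, and $\Sigma := p^{-1}(S_{\bar g}) \supset H$. A direct projection computation shows that $\til g \in G$ preserves $Hg$ under the $\phi$-twisted action of $G$ on itself exactly when $p(\til g) \in S_{\bar g}$, so this action restricts to an action of $\Sigma$ on $Hg$ whose orbits are precisely the intersections $\{hg\}_\phi \cap Hg$ with $\phi$-classes lying over $\{\bar g\}_{\ov\phi}$; by (\ref{eq:klasstaugsh}) the orbits of $H \triangleleft \Sigma$ are the $g$-shifts of the $\t_g\phi'$-classes in $H$. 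Since $\Sigma / H \cong S_{\bar g}$, each $\Sigma$-orbit in $Hg$ splits into at most $[\Sigma : H] = |S_{\bar g}|$ distinct $H$-orbits (indexed by the double cosets of $H$ and the point-stabilizer in $\Sigma$), whence
$$
R(\t_g\phi') \;\le\; |S_{\bar g}| \cdot \#\{\phi\text{-classes lying over } \{\bar g\}_{\ov\phi}\}.
$$

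To conclude, apply this with $g = e$: then $\t_e\phi' = \phi'$ and $S_e = \{\bar x : \bar x = \ov\phi(\bar x)\} = \Fix(\ov\phi)$ has cardinality $k$, and combining with the bound of the second paragraph yields (\ref{eq:ozenreidcherpernonabel}). For (\ref{eq:ozenreidcherperabel}) the abelian hypothesis makes conjugation in $G/H$ trivial, so $\bar x \bar g_i\, \ov\phi(\bar x^{-1}) = \bar g_i \cdot \bar x \ov\phi(\bar x^{-1})$ and $S_{\bar g_i} = \Fix(\ov\phi)$ has size $k$ for every $i$; summing the displayed inequality over $i = 1,\dots, R(\ov\phi)$ and using that the $\phi$-class counts over distinct $\ov\phi$-classes partition $R(\phi)$ gives the required bound. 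The main technical point is the orbit-counting in the middle step: one must verify that $\Sigma$ genuinely acts on $Hg$ and that the refinement $\Sigma$-orbits $\to$ $H$-orbits is controlled by $[\Sigma : H]$, with finiteness of $S_{\bar g}$ ensured by $\#\Fix(\ov\phi) = k$ at $\bar g = e$ and by the abelian assumption in the summation.
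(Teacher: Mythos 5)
Your proof is correct and follows essentially the same route as the paper: the paper also uses the twisted $G$-action on itself to permute the $H$-level classes inside a fixed $\phi$-class, observes that only elements $\til g$ with $p(\til g)$ in the twisted stabilizer of $\bar g$ (which equals $\Fix(\ov\phi)$ at $\bar g=\ov e$, and also for all $\bar g$ when $G/H$ is abelian) can preserve the coset $Hg$, and that this permutation depends only on $p(\til g)$, yielding the factor $k$, which is then combined with the same counting of $\phi$-classes over $\ov\phi$-classes. Your repackaging via the subgroup $\Sigma=p^{-1}(S_{\bar g})$ and the orbit-refinement bound $[\Sigma:H]$ is just a more structured phrasing of the paper's argument.
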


\begin{proof}
Consider classes $\{z\}_{\phi'}$, $z\in G$, i.e. the classes of
relation $z\sim hz\phi'(h^{-1})$, $h\in H$. The group $G$ acts on
them by $z\mapsto gz\phi(g^{-1})$. Indeed,
\begin{multline*}
    g[\til h h \phi(\til h ^{-1})]\phi(g^{-1})=
(g \til h g^{-1}) (g h \phi(g^{-1}))(\phi(g) \phi(\til h ^{-1})\phi(g^{-1}))\\
    =
(g \til h g^{-1}) (g h \phi(g^{-1})) \phi(g \til hg^{-1})\in
\{g h \phi(g^{-1})\}_{\phi'},
\end{multline*}
because $H$ is normal and $g \til h g^{-1}\in H$. Due to invertibility,
this action of $G$ transposes classes $\{z\}_{\phi'}$ inside one class
$\{g\}_\phi$. Hence, the number $d$ of classes $\{h\}_{\phi'}$
inside $\{h\}_{\phi}\cap H$ does not exceed the number of $g\in G$
such that $p(g)\ov\phi(p(g^{-1}))=\ov e$.
Since two elements $g$ and $gh$
in one $H$-coset induce the same permutation of classes $\{h\}_{\phi'}$,
the mentioned number $d$ does not exceed the number of $z\in G/H$
such that $z\ov\phi(z^{-1})=\ov e$, i.e. $d\le k$. This implies
(\ref{eq:ozenreidcherpernonabel}).

Now we discuss $\phi$-classes over $\ov\phi$-classes other than
$\{\ov e\}_{\ov\phi}$ for an abelian $G/H$. An estimation analogous to
the above one leads to the number of $z\in G/H$
such that $z z_0\ov\phi(z^{-1})=z_0$ for some fixed $z_0$. But for
an Abelian $G/H$ they form the same group $\Fix(\ov\phi)$. This
together with the description (\ref{eq:klasstaugsh})
of shifts of $D$ at the beginning of the present Section implies
(\ref{eq:ozenreidcherperabel}).
\end{proof}

\begin{lem}\label{lem:fixedofextens}
Suppose, in the extension {\rm (\ref{eq:extens})} the group $H$ is
abelian. Then $\#\Fix(\phi)\le \#\Fix(\phi')\cdot \#\Fix(\ov\phi)$.
\end{lem}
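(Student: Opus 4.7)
I read $\Fix(\phi)$ as the subgroup $\{g\in G:\phi(g)=g\}$, consistent with the usage of $\Fix(\ov\phi)$ in the proof of Lemma~\ref{lem:ozenkacherezperiod} (where the condition $z\ov\phi(z^{-1})=\ov e$ literally expresses that $\ov\phi(z)=z$); analogously for $\Fix(\phi')$ and $\Fix(\ov\phi)$.

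The plan is to exploit the short exact sequence of the extension (\ref{eq:extens}) in order to fiber $\Fix(\phi)$ over $\Fix(\ov\phi)$, with fibers controlled by $\Fix(\phi')$. First I verify that $p:G\to G/H$ restricts to a group homomorphism $\Fix(\phi)\to\Fix(\ov\phi)$: if $\phi(g)=g$, then by the commutativity of the right square in (\ref{eq:extens}) one has $\ov\phi(p(g))=p(\phi(g))=p(g)$, so the image lies in $\Fix(\ov\phi)$. Next I identify the kernel of this restriction as $\Fix(\phi)\cap H=\{h\in H:\phi'(h)=h\}=\Fix(\phi')$, using $\phi'=\phi|_H$. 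Finally, since $\Fix(\phi)$ is a subgroup of $G$, partitioning it by cosets of the kernel yields $\#\Fix(\phi)=\#\Fix(\phi')\cdot\#p(\Fix(\phi))\le \#\Fix(\phi')\cdot\#\Fix(\ov\phi)$, which is the desired bound.

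The argument is essentially formal, and the only point of care is the kernel identification, which is immediate from $\phi'=\phi|_H$; there is no serious obstacle. I note that the hypothesis that $H$ is abelian does not appear to enter this inequality itself, so it is presumably imposed for coherence with the later applications of the lemma, where $\wh H$ will play the role of a dual group via Pontryagin duality.
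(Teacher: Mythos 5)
Your proof is correct, and it takes a more structural route than the paper's, although the underlying fibering is the same. The paper's proof chooses a set-theoretic section $s:G/H\to G$, writes a putative fixed point as $s(z)h$, derives $(s(z))^{-1}\phi(s(z))=h\phi'(h^{-1})$, concludes from this that $z\in\Fix(\ov\phi)$, and then bounds, for each admissible $z$, the number of solutions $h$ of $h\phi'(h^{-1})=h_i$ by showing that two solutions differ by an element of $\Fix(\phi')$ --- and it is at this last step that the paper invokes commutativity of $H$. You instead observe that $\Fix(\phi)$ is a subgroup of $G$, that $p$ restricts to a homomorphism $\Fix(\phi)\to\Fix(\ov\phi)$ whose kernel is exactly $i(\Fix(\phi'))$, and then count cosets of the kernel. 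This packaging avoids choosing a section, and --- as you correctly note --- it never uses that $H$ is abelian, so your argument proves the lemma for an arbitrary normal subgroup $H$; in effect, the paper's explicit computation is your coset argument unwound by hand, since two fixed points $s(z)h$ and $s(z)\til h$ in the same fiber have quotient $h^{-1}\til h\in \Fix(\phi)\cap i(H)$. (In fact the paper's own manipulation does not truly need abelianness either: $h\phi'(h^{-1})=\til h\phi'(\til h^{-1})$ gives $\til h^{-1}h=\phi'(\til h^{-1})\phi'(h)=\phi'(\til h^{-1}h)$ in any group. The hypothesis is there because the lemma is only ever applied with $H$ abelian --- the torsion subgroup in Theorem \ref{teo:fixandreidforabel}, the center in Lemma \ref{lem:fgFChasfinitefix} --- rather than for the Pontryagin-duality reason you speculate about; that side remark is off-target but harmless.) Your reading of $\Fix(\phi)$ as the fixed subgroup $\{g:\phi(g)=g\}$ is indeed the intended one, and your counting $\#\Fix(\phi)=\#\Fix(\phi')\cdot\#p(\Fix(\phi))$ is valid as cardinal arithmetic, so the inequality holds also when the quantities are infinite.
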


\begin{proof}
Let $s:G/H\to G$ be a section of $p$. If $s(z)h$ is a fixed
point of $\phi$ then
\begin{equation}\label{eq:sotziphiotz}
    (s(z))^{-1}\phi(s(z))=h\phi'(h^{-1}).
\end{equation}
Hence, $z\in \Fix(\ov\phi)$ and left hand side takes $k:=\#\Fix(\ov\phi)$
values $h_1,\dots,h_k$.
Let us estimate the number of $s(z)h$ for a fixed $z$ such that
$(s(z))^{-1}\phi(s(z))=h_i$. These $h$ have to satisfy
(\ref{eq:sotziphiotz}).
Since $H$ is abelian, if one has
$$
h_i=h\phi'(h^{-1})=\til h\phi'(\til h^{-1}),
$$
then $h^{-1}\til h\in \Fix(\phi')$ and we are done.
\end{proof}

\begin{teo}\label{teo:fixandreidforabel}
Let $A$ be a finitely generated Abelian group, $\psi:A\to A$ its
automorphism with $R(\psi)<\infty$. Then $\#\Fix(\psi)<\infty$.

Moreover, $R(\psi)\ge \#Fix(\psi)$.
\end{teo}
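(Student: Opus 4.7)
The plan is to switch to additive notation and translate everything into linear-algebraic statements about the endomorphism $\alpha := \Id - \psi$ of $A$. By Lemma~\ref{lem:charabelclassred}, the twisted conjugacy class of $0$ is the subgroup $H=\alpha(A) = \Im(\Id-\psi)$ and the remaining classes are the cosets of $H$; therefore
$$
R(\psi) \;=\; [A:\Im\alpha] \;=\; \#\Coker\alpha, \qquad
\Fix(\psi) \;=\; \Ker(\Id-\psi) \;=\; \Ker\alpha.
$$
So the theorem becomes the purely algebraic claim: for an endomorphism $\alpha$ of a finitely generated Abelian group $A$ with $\#\Coker\alpha<\infty$, one has $\#\Ker\alpha<\infty$ and $\#\Coker\alpha \ge \#\Ker\alpha$.

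Let $T\ss A$ be the torsion subgroup; it is characteristic, so $\psi$, and hence $\alpha$, preserves it. Writing $\alpha_T=\alpha|_T$ and letting $\alpha_F$ denote the endomorphism induced on the free quotient $A/T\cong\Z^r$, I would apply the snake lemma to
$$
\xymatrix{
0\ar[r]& T\ar[r]\ar[d]^{\alpha_T}& A\ar[r]\ar[d]^{\alpha}& A/T \ar[r]\ar[d]^{\alpha_F}& 0\\
0\ar[r]& T\ar[r]& A\ar[r]& A/T\ar[r]& 0,}
$$
obtaining the six-term exact sequence
$$
0\to \Ker\alpha_T\to \Ker\alpha\to \Ker\alpha_F\to \Coker\alpha_T\to \Coker\alpha\to \Coker\alpha_F\to 0.
$$
The key observation is that $\#\Coker\alpha<\infty$ forces $\#\Coker\alpha_F<\infty$; since $\alpha_F$ is an endomorphism of $\Z^r$, finiteness of its cokernel is equivalent to $\det\alpha_F\ne 0$, hence $\alpha_F$ is injective and $\Ker\alpha_F=0$. (This is the step where the hypothesis ``finitely generated Abelian'' really enters, and it is the only non-formal point of the argument.)

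From $\Ker\alpha_F=0$ the snake sequence splits into $\Ker\alpha\cong\Ker\alpha_T$ and a short exact sequence $0\to\Coker\alpha_T\to\Coker\alpha\to\Coker\alpha_F\to 0$. The first isomorphism embeds $\Ker\alpha$ into the finite group $T$, proving $\#\Fix(\psi)<\infty$. For the inequality, use that $T$ is finite, so $\#\Ker\alpha_T=\#\Coker\alpha_T$, and then the second short exact sequence gives
$$
\#\Coker\alpha \;=\; \#\Coker\alpha_T\cdot\#\Coker\alpha_F \;=\; \#\Ker\alpha\cdot\#\Coker\alpha_F \;\ge\; \#\Ker\alpha,
$$
which is exactly $R(\psi)\ge \#\Fix(\psi)$. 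I expect no real obstacle beyond setting up the snake sequence correctly; the one substantive ingredient is the equivalence of $\#\Coker\alpha_F<\infty$ with injectivity of $\alpha_F$ on $\Z^r$.
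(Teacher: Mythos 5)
Your proposal is correct, and it takes a genuinely different route from the paper. Both arguments hinge on the same decomposition — the finite, characteristic torsion subgroup $T$ with free quotient $A/T\cong\Z^r$ — but you linearize the whole problem at once, setting $\a=\Id-\psi$ and running the snake lemma, whereas the paper chains together three separate pieces of machinery: the fixed-point estimate for extensions (Lemma \ref{lem:fixedofextens}), the cited fact from \cite{Jiang}, \cite{FelshB} that finiteness of the Reidemeister number on $\Z^k$ forces $\Fix(\ov\psi)=\{\ov e\}$, and the estimate (\ref{eq:ozenreidcherperabel}) of Lemma \ref{lem:ozenkacherezperiod}, yielding the chain $\#\Fix(\psi)\le\#\Fix(\psi')=R(\psi')\le R(\psi)$. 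Your key step — finite cokernel of $\a_F$ on $\Z^r$ forces $\det\a_F\ne 0$, hence $\Ker\a_F=0$ — is exactly the content of the fact the paper imports from \cite{Jiang}, but you prove it directly and then let exactness do the bookkeeping that the paper's two extension lemmas do by hand. Your version is more self-contained (it needs only Lemma \ref{lem:charabelclassred} to translate $R(\psi)=\#\Coker\a$ and $\Fix(\psi)=\Ker\a$) and in fact delivers more: the exact multiplicative identity $R(\psi)=\#\Fix(\psi)\cdot\#\Coker\a_F$ rather than a mere inequality. What the paper's approach buys is generality: Lemmas \ref{lem:fixedofextens} and \ref{lem:ozenkacherezperiod} apply to non-abelian extensions and are reused elsewhere (e.g. in Lemma \ref{lem:fgFChasfinitefix} for finitely generated FC-groups), where the snake lemma is no longer available because $\Id-\psi$ fails to be a homomorphism outside the abelian setting.
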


\begin{proof}
Let $T$ be the torsion subgroup. It is finite and characteristic.
We obtain the extension $T\to A \to A/T$ respecting $\phi$.
Since $A/T\cong \Z^k$, $\Fix(\ov\psi:A/T\to A/T)=\ov e$, by
\cite{Jiang},\cite[Sect. 2.3.1]{FelshB}. Hence, by Lemma
\ref{lem:fixedofextens}, $\#\Fix(\psi)\le \#\Fix(\psi')$,
$\psi':T\to T$. For any finite abelian group $T$ one
clearly has $\#\Fix(\psi')=R(\psi')$ by Theorem \ref{teo:reidandcokerabel}
(cf. \cite[p.~7]{FelshB}). Finally,
$R(\psi')\le R(\psi)$ by (\ref{eq:ozenreidcherperabel}).
\end{proof}

\begin{lem}\label{lem:ozenfixp}
Suppose, $|G/H|=N<\infty$. Then
$R(\t_g\phi')\le N R(\phi)$. More precisely, the mentioned
subpartition is not more than in $N$ parts.
\end{lem}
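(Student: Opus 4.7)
The plan is to establish the refined ``at most $N$ parts'' claim first, since the main estimate $R(\t_g\phi')\le N\cdot R(\phi)$ is an immediate consequence. The total number of blocks in the fine partition $Hg=\bigcup(\{h\}_{\t_g\phi'})\,g$ equals $R(\t_g\phi')$, while the coarser partition by the nonempty intersections $Hg\cap\{hg\}_\phi$ has at most $R(\phi)$ blocks; if each coarse block contains at most $N$ fine blocks, the estimate follows.

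First I would fix a $\phi$-class $C=\{h_0 g\}_\phi$ meeting $Hg$ and describe the set $(C\cap Hg)\,g^{-1}\ss H$. An element $a\in H$ belongs to this set exactly when there is a lift $\til g\in G$ with $a\,g=\til g\,h_0 g\,\phi(\til g^{-1})$. Applying $p$ to this equation and using the normality of $H$ together with $\phi'=\phi|_H$, I expect the admissibility condition to depend only on $z=p(\til g)\in G/H$ and to reduce to
$$
 z\,p(g)\,\ov\phi(z^{-1})=p(g),
$$
which cuts out a subset $S\ss G/H$ with $|S|\le N$.

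The key combinatorial step is to show that two lifts in the same $H$-coset yield $\t_g\phi'$-equivalent $a$'s: replacing $\til g$ by $\til h\til g$ with $\til h\in H$ should transform $a$ into $\til h\,a\,(\t_g\phi')(\til h^{-1})$, by a direct computation using only $\phi'=\phi|_H$ and the definition of $\t_g$. This would show that the assignment $z\mapsto [a]_{\t_g\phi'}$ is a well-defined map $S\to\{\t_g\phi'\text{-classes inside }(C\cap Hg)g^{-1}\}$, surjective because every such class arises from some lift $\til g$. The bound $|S|\le N$ then gives the refined claim and, summing across the $\phi$-classes meeting $Hg$, the global estimate.

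The only real obstacle is the bookkeeping in this parametrization — in particular, verifying that the equivalence ``same $H$-coset'' on lifts $\til g$ matches ``$\t_g\phi'$-equivalent images'' on the $a$'s. But this is essentially the twisted version of the calculation already performed in the proof of Lemma~\ref{lem:ozenkacherezperiod} (where the case $g=e$ was treated), and no ingredients beyond normality of $H$ and $\phi'=\phi|_H$ are needed.
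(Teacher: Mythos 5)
Your proposal is correct and takes essentially the same route as the paper: the paper's proof considers the twisted conjugation action $x\mapsto\til g\,x\,\phi(\til g^{-1})$ of $G$, shows via the same normality computation that it permutes the fine classes (\ref{eq:klasstaugsh}) and that $\til g$ and $\til g\wh h$ ($\wh h\in H$) act identically, so the transitive action on the fine classes inside one $\phi$-class factors through $G/H$ and each orbit has at most $N$ elements. Your parametrization of the fine classes in $C\cap Hg$ by the admissible subset $S\ss G/H$ of cosets of lifts, with well-definedness on $H$-cosets checked by the computation $a\mapsto\til h\,a\,(\t_g\phi')(\til h^{-1})$, is just this orbit argument restricted to a single coarse block.
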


\begin{proof}
Consider the following action of $G$ on itself: $x\mapsto gx\phi(g^{-1}).$
Then its orbits are exactly classes $\{x\}_\phi$. Moreover it maps classes
(\ref{eq:klasstaugsh}) onto each other. Indeed,
$$
\til g \til h (h g) \phi'(\til h^{-1}) \phi(\til g^{-1})=
 {\wh h}  \til g (h g)  \phi(\til g^{-1})
\phi'( {\wh h}^{-1})
$$
using normality of the $H$. This map is invertible
($\til g \leftrightarrow \til g^{-1}$), hence bijection.
Moreover, $\til g$ and $\til g \wh h$, for any $\wh h\in H,$
act in the same way. Or in the other words, $H$ is in the stabilizer
of this permutation of classes (\ref{eq:klasstaugsh}). Hence,
the cardinality of any orbit $\le N$.
\end{proof}

Hence, for any finite $G/H$ the number of classes of the form
(\ref{eq:klasstaugsh}) is finite: it is $\le N  R(\phi)$.

\begin{lem}
Suppose,   $H$ satisfies the following
property: for any automorphism of $H$ with finite Reidemeister
number the characteristic functions of Reidemeister classes of $\phi$
are linear combinations
of matrix elements of some finite number of irreducible
finite dimensional  representations of $H$.
Then the characteristic functions of classes
{\rm (\ref{eq:klasstaugsh})}
are linear combinations
of matrix elements of some finite number of irreducible
 finite dimensional  representations of $G$.
\end{lem}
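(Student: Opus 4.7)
The plan is to identify each class of the form (\ref{eq:klasstaugsh}) with the right translate $Dg$ of a single $\tau_g\circ\phi'$-twisted conjugacy class $D\subset H$, then invoke the hypothesis on $H$ to represent $\chi_D$ by matrix coefficients of finite-dimensional irreducibles of $H$, and finally transport these to $G$ via induced representations, using that $[G:H]<\infty$ in the setting in which this lemma is actually applied.

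First I would observe that since $H$ is normal and $\phi$ restricts to an automorphism of $H$ (as is the case in the targeted applications, e.g. when $H$ is a characteristic subgroup), $\tau_g\circ\phi'$ is a genuine automorphism of $H$, and Lemma~\ref{lem:ozenfixp} combined with $R(\phi)<\infty$ and $[G:H]<\infty$ yields $R(\tau_g\circ\phi')<\infty$. The hypothesis on $H$ then furnishes irreducible finite-dimensional unitary representations $\pi_1,\ldots,\pi_n$ of $H$ and scalars $c_{k,ij}$ with
$$\chi_D(h) \;=\; \sum_{k=1}^{n}\sum_{i,j} c_{k,ij}\,\bigl\langle \pi_k(h)\,e_j,\,e_i \bigr\rangle, \qquad h\in H.$$

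Next I would replace each $\pi_k$ by the induced representation $\widetilde\pi_k := \Ind_H^G \pi_k$, which is finite-dimensional of dimension $[G:H]\cdot\dim\pi_k$. Realizing $\widetilde\pi_k$ on the standard space of functions $f:G\to V_{\pi_k}$ satisfying $f(hx)=\pi_k(h)f(x)$ with action $(y\cdot f)(x)=f(xy)$, and choosing $f_v, f_w$ supported on the identity coset with $f_v(e)=v$, $f_w(e)=w$, a routine computation gives
$$\bigl\langle \widetilde\pi_k(y)\,f_v,\,f_w \bigr\rangle \;=\; \begin{cases} \langle \pi_k(y)\,v,\,w\rangle & \text{if } y\in H,\\ 0 & \text{if } y\notin H. \end{cases}$$
Hence the extension-by-zero outside $H$ of any matrix coefficient of $\pi_k$ is itself a matrix coefficient of $\widetilde\pi_k$.

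Finally I would transfer this to $\chi_{Dg}$ by right translation: since $\chi_{Dg}(x)=\chi_D(xg^{-1})$ automatically vanishes unless $x\in Hg$, $\chi_{Dg}$ is the right $g$-translate of the zero-extension of $\chi_D$, and right translation preserves matrix coefficients of a fixed representation (merely replacing $v$ by $\widetilde\pi_k(g^{-1})v$). Decomposing each finite-dimensional $\widetilde\pi_k$ into a direct sum of irreducibles of $G$ and expanding matrix coefficients accordingly, I obtain the desired expression of $\chi_{Dg}$ in terms of matrix coefficients of finitely many irreducible finite-dimensional representations of $G$. The main obstacle I anticipate is verifying $[G:H]<\infty$ in the concrete setting: for the almost polycyclic case this is automatic upon choosing the polycyclic characteristic subgroup appropriately, but extending beyond finite index would require a more subtle Mackey-theoretic argument to keep the resulting $G$-representation finite-dimensional.
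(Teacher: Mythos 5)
Your proposal is correct and takes essentially the same route as the paper's own proof: both apply the hypothesis on $H$ to the automorphisms $\tau_g\circ\phi'$ (whose Reidemeister numbers are finite by Lemma~\ref{lem:ozenfixp} under the implicit finite-index assumption on $G/H$), realize the characteristic functions of the classes $D\subset H$ as matrix coefficients of a finite-dimensional representation of $H$, induce that representation to $G$, use vectors supported on the trivial coset, handle the shift by $g$ through translation of vectors, and finally decompose the finite-dimensional induced representation into finitely many irreducibles of $G$. The only divergence is cosmetic: the paper additionally tensors the induced representation with the regular representation of $G/H$ to force the coefficient to vanish off $i(H)$, a step your computation shows to be redundant, since with both vectors supported on the identity coset the matrix coefficient of $\Ind_H^G\pi_k$ already vanishes outside $H$.
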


\begin{proof}
Let $\r_1,\r_2,\dots,\r_k$ be the above  irreducible
representations of $H$, $\r$ its direct sum acting on $V$, and
$\pi$ the regular
(finite dimensional) representation of $G/H$.
Let $\r^I_1,\dots,\r^I_k, \r^I$ be the corresponding
induced representations of $G$. Let us remind that in this
simple situation the representation $\r^I$ is defined as
a representation of $G$ in the space
$l_2(G/H,V)\cong \oplus_1^{|G/H|} V$ defined by the formula
$$
[\r^I(g) f](x)=\r(s(x)g(s(x g))^{-1})f(x g),\qquad f\in
l_2(G/H,V), \quad x\in G/H,
$$
for some fixed section $s:G/H\to G$ of the canonical projection
$G\to G/H$.

Let the characteristic function of
$D$ be represented under the form $\chi_D(h)=\<\r(h)\xi,\eta\>$.
Let $\xi^I\in L^2(G/H,V)$ be defined by the formulas
$\xi^I(\ov e)=\xi \in V$, $\xi^I(\ov g)=0$ if $\ov g\ne \ov e$.
Define similarly $\eta^I$. Then for $h\in i(H)$ we have
$$
\r^I(h) \xi^I (\ov g)=\r(s(\ov g) h s(\ov g h)^{-1})\xi (\ov g h)=
\r(h s(\ov g) s(\ov g)^{-1})\xi (\ov g )=
\begin{cases}
\r(h)\xi, & \mbox{ if } \ov g = \ov e, \\
0,  & \mbox{ otherwise. }
\end{cases}
$$
Hence,
$\<\r^I(h) \xi^I,\eta^I\>|_{i(H)}$ is the characteristic function of
$i(D)$. Let $u,v\in L^2(G/H)$ be such vectors that $\<\pi(\ov g)u,v\>$
is the characteristic function of $\ov e$. Then
$$
\<(\r^I\otimes \pi)(\xi^I\otimes u,\eta^I\otimes v)\>
$$
is the characteristic function of  $i(D)$. Other characteristic
functions of classes
{\rm (\ref{eq:klasstaugsh})}
are shifts of this one. Hence, they are matrix elements of the
representation $\r^I\otimes \pi$. It is finite dimensional. Hence it can
be decomposed in a finite direct sum of irreducible representations.
\end{proof}

\begin{cor}[of previous two lemmata]\label{lem:osnoskl}
Under the assumptions  of the previous lemma, the
characteristic functions of Reidemeister classes of $\phi$
are linear combinations
of matrix elements of some finite number of irreducible
finite dimensional representations of $G$.
\end{cor}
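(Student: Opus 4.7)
The plan is to deduce the corollary by combining the decomposition of Reidemeister classes discussed just before Lemma~\ref{lem:ozenfixp} with the representation-theoretic conclusion of the previous lemma. For a fixed Reidemeister class $\{g\}_\phi$, formula~(\ref{eq:epiofclassforexs}) shows that its image under $p$ is a single $\ov\phi$-conjugacy class in $G/H$. Since $|G/H|=N<\infty$, this image consists of finitely many cosets; picking representatives $g_1,\dots,g_s$ (with $s\le N$) of the cosets hit by $\{g\}_\phi$, we obtain a partition
$$
\{g\}_\phi \;=\; \bigsqcup_{j=1}^{s}\bigl(\{g\}_\phi\cap Hg_j\bigr).
$$

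Next, the discussion preceding Lemma~\ref{lem:ozenfixp} expresses each nonempty intersection $\{g\}_\phi\cap Hg_j$ as a disjoint union of right-shifts $Dg_j$, where $D$ runs through some $\t_{g_j}\circ\phi'$-conjugacy classes in $H$ satisfying~(\ref{eq:klasstaugsh}). Lemma~\ref{lem:ozenfixp} then supplies the crucial finiteness: at most $N$ such classes $D$ occur above each $g_j$. Consequently $\chi_{\{g\}_\phi}$ is a finite sum of characteristic functions of the kind~(\ref{eq:klasstaugsh}), each one shifted by the appropriate $g_j$.

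The previous lemma now applies to each summand: every such characteristic function is a linear combination of matrix elements of finitely many irreducible finite-dimensional representations of $G$, constructed inside $\r^I\otimes\pi$ there. Taking the finite sum over $j$ and over the at-most-$N$ pieces in each coset preserves this property for the single class $\{g\}_\phi$. Since having $\chi_{\{g\}_\phi}$ well-defined implicitly uses $R(\phi)<\infty$, only finitely many classes $\{g\}_\phi$ arise in total, and the pool of finite-dimensional irreducible representations required stays finite overall.

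The only genuine point to verify, and the potential obstacle, is that the right-shift by $g_j$ does not take us outside the class of functions provided by the previous lemma. This is however automatic: inspection of the proof of that lemma shows that the matrix coefficient $\langle (\r^I\otimes\pi)(\,\cdot\,)(\xi^I\otimes u),(\eta^I\otimes v)\rangle$ already realizes the characteristic function of the image $iD\subset G$ (not merely of $D$ inside $H$), and right-translates of matrix coefficients of $\r^I\otimes\pi$ by elements of $G$ remain matrix coefficients of the same representation. With this observation the corollary follows immediately.
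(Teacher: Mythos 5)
Your proof is correct and is essentially the argument the paper intends: the statement is given there as a corollary of the two preceding lemmata, i.e.\ under the standing assumptions ($R(\phi)<\infty$, $|G/H|=N<\infty$) the subpartition discussion together with Lemma~\ref{lem:ozenfixp} shows each $\phi$-class is a finite union of classes of the form~(\ref{eq:klasstaugsh}), and the previous lemma expresses their characteristic functions as matrix elements of finitely many finite-dimensional irreducible representations of $G$. Your closing remark about right shifts is precisely how the previous lemma itself handles the shifted classes (shifts of matrix coefficients of $\r^I\otimes\pi$ are again matrix coefficients), so your route does not diverge from the paper's.
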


\section{The case of groups of type I}\label{sec:typeI}

\begin{teo}\label{teo:proptypeI}
Let $G$ be a discrete group of type {\rm I}. Then
\begin{itemize}
\item \cite[3.1.4, 4.1.11]{DixmierEng}
The dual space $\wh G$ is a $T_1$-topological space.

\item \cite{ThomaInvent}
Any irreducible representation of $G$ is finite-dimensional.

\end{itemize}
\end{teo}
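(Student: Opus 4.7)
The plan is to establish both items through the structural theory of type I C*-algebras applied to the group C*-algebra $C^*(G)$, using the identification of $\wh G$ with the spectrum of $C^*(G)$ carrying its Jacobson (hull-kernel) topology, together with the defining condition that $G$ being of type I means $C^*(G)$ is a type I C*-algebra.

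The first step is to attack the second assertion, since finite-dimensionality of all irreducible representations is the logically stronger input. The key ingredient is Thoma's theorem, which characterizes countable discrete groups of type I as the virtually abelian ones, i.e.\ those possessing a normal abelian subgroup $A$ of finite index $n=[G:A]$. Granting this, Mackey's little-group analysis applied to the extension $1\to A\to G\to G/A\to 1$ expresses every irreducible unitary representation $\r$ of $G$ as induced from the stabilizer $G_\chi\supseteq A$ of some character $\chi\in\wh A$ under the $G/A$-action. Since $G/A$ is finite, $G_\chi$ has finite index in $G$, and the inducing representation of $G_\chi$ is a (possibly projective) extension of $\chi$ by an irreducible representation of the finite group $G_\chi/A$; it is therefore finite-dimensional, and induction up to $G$ multiplies dimension by the finite index $[G:G_\chi]$, giving $\dim\r\le n$.

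With finite-dimensionality in hand, the first assertion follows along standard C*-algebraic lines. If $\r:C^*(G)\to \B(\H_\r)$ is irreducible with $\dim \H_\r<\infty$, then $\r$ is automatically surjective onto $\B(\H_\r)=\K(\H_\r)$, so $C^*(G)$ is CCR (liminal). For a CCR C*-algebra the kernel of any irreducible representation is a maximal two-sided ideal, and therefore a closed point of $\Prim C^*(G)$ in the Jacobson topology; the canonical bijection $\wh G\cong\Prim C^*(G)$ then transfers this to $\wh G$, yielding the $T_1$ property. This is the content of Dixmier 3.1.4 and 4.1.11.

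The main obstacle is Thoma's theorem itself: its proof relies on a nontrivial analysis of factor representations and characters on discrete groups via von Neumann algebras, and the expectation is to cite rather than reprove it. Once Thoma is granted, both items reduce to formal C*-algebraic manipulations, and the two are tightly linked, since for $C^*(G)$ the CCR condition is essentially equivalent to finite-dimensionality of all its irreducible representations.
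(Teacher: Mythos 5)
Your proposal is correct, and it matches the logical structure the paper intends: the paper gives no proof of its own for this theorem, stating both items as citations (Dixmier 3.1.4, 4.1.11 for the $T_1$ property, Thoma for finite-dimensionality), with the virtually-abelian characterization and the dimension bound $[G:M]$ recorded immediately afterwards in Remark \ref{rk:typeIareHausd}, again credited to Thoma. Your reconstruction correctly identifies the hidden dependency in the citations, namely that the $T_1$ statement for discrete groups is not a consequence of type I-ness alone (postliminal algebras generally have only $T_0$ spectra) but goes through liminality of $C^*(G)$, which in turn requires Thoma's theorem; hence your ordering, second bullet before first, is the right one. The only place you go beyond the paper is in re-deriving the dimension bound from the virtually-abelian characterization via the Mackey little-group method, where the paper simply quotes the bound from Thoma; your Mackey argument is sound here because $G/A$ finite makes all orbits on $\wh A$ finite and the action smooth, and it yields the same bound $\dim\r\le[G:A]$, so this is a filled-in detail rather than a different route.
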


\begin{rk}\label{rk:typeIareHausd}
In fact a discrete group  $G$ is of type I if and only if it has a
normal, Abelian subgroup $M$ of finite index. The dimension of any
irreducible representation of $G$ is at most $[G: M]$
\cite{ThomaInvent}.
\end{rk}

Suppose $R=R(\phi)<\infty$, and let $F\ss L^\infty(G)$ be the
$R$-dimensional space of all twisted-invariant functionals on
$L^1(G)$. Let $K\ss L^1(G)$ be the intersection of kernels of
functionals from $F$. Then $K$ is a linear subspace of $L^1(G)$
of codimension $R$. For each $\rho\in\wh G$ let us denote by
$K_\r$ the image $\rho(K)$. This is a subspace of a
(finite-dimensional) full matrix algebra. Let $\cd_\r$ be
its codimension.

Let us introduce the following set
$$
\wh G_F=\{\r\in\wh G \: |\: \cd_\r\ne 0\}.
$$

\begin{lem}
One has $\cd_\r\ne 0$ if and only if  $\r$ is a fixed point
of $\wh\phi$.
In this case $\cd_\r=1$.
\end{lem}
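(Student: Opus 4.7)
The plan is to compute $\cd_\rho$ by dualizing with the trace pairing and recognizing the resulting condition as an intertwining condition, at which point Schur's lemma finishes the job.

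First, by Theorem \ref{teo:proptypeI} the representation $\rho$ is finite-dimensional, say $\dim\rho=n_\rho$. Burnside's density theorem (applied to the irreducible action of $\rho(\C[G])$ on the representation space) gives $\rho(\C[G])=M_{n_\rho}(\C)$, and hence $\rho(L^1(G))=M_{n_\rho}(\C)$ as well. So $K_\rho$ is a genuine subspace of the full matrix algebra, and $\cd_\rho$ is a well-defined non-negative integer.

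Next I would describe $K_\rho$ concretely. Since $F$ is $R$-dimensional and spanned by the characteristic functions $\chi_{C_1},\dots,\chi_{C_R}$ of the $\phi$-conjugacy classes (viewed as functionals on $L^1(G)$), an $f\in L^1(G)$ lies in $K$ exactly when $\sum_{x\in C}f(x)=0$ for every twisted class $C$. Choosing a base-point $x_C$ in each class, a short calculation shows
$$
K_\rho=\Span\bigl\{\rho(x)-\rho(x_C)\;:\;x\in C,\ C\text{ a twisted class}\bigr\}.
$$
By Lemma \ref{lem:funcnamat}, functionals on $M_{n_\rho}(\C)$ have the form $a\mapsto\Tr(ba)$, and non-degeneracy of the trace pairing identifies $\cd_\rho$ with the dimension of the annihilator of $K_\rho$. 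So $\cd_\rho$ is the dimension of the space of $b\in M_{n_\rho}(\C)$ such that $\Tr\bigl(b\rho(x)\bigr)=\Tr\bigl(b\rho(x_C)\bigr)$ for every class $C$ and every $x\in C$, i.e. such that the function $g\mapsto\Tr(b\rho(g))$ is constant on $\phi$-conjugacy classes.

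Then I would translate twisted-invariance of this matrix coefficient into an intertwining condition. Writing the condition $\Tr\bigl(b\rho(g x\phi(g^{-1}))\bigr)=\Tr\bigl(b\rho(x)\bigr)$ and using $\rho(L^1(G))=M_{n_\rho}(\C)$ to let $\rho(x)$ range over all matrices, one obtains
$$
\rho(\phi(g))^{-1}b\rho(g)=b,\qquad\text{i.e.}\qquad b\rho(g)=\rho(\phi(g))b\quad(\forall g\in G).
$$
Thus $b$ is an intertwiner from $\rho$ to $\wh\phi(\rho)=\rho\circ\phi$. Since both are irreducible of the same (finite) dimension, Schur's lemma gives that the space of such $b$ is one-dimensional when $\rho\cong\wh\phi(\rho)$ (i.e.\ $\rho\in\Fix(\wh\phi)$) and zero-dimensional otherwise. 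This yields $\cd_\rho=1$ in the first case and $\cd_\rho=0$ in the second, as claimed.

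The only delicate step is the passage from the spanning description of $K_\rho$ to the dual intertwining condition; the main point there is that $\rho(L^1(G))$ fills all of $M_{n_\rho}(\C)$, which is precisely where the finite-dimensionality granted by type I (and Burnside density) is essential. The rest is bookkeeping with the trace pairing and a direct application of Schur's lemma.
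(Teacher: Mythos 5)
Your proof is correct and follows essentially the same route as the paper: both arguments identify the annihilator of $K_\r$ under the trace pairing of Lemma \ref{lem:funcnamat} with the space of twisted-invariant functionals on the full matrix algebra $\r(L^1(G))$, rewrite twisted invariance as the intertwining relation $b\r(g)=\r(\phi(g))b$, and conclude with Schur's lemma. The only cosmetic difference is in the bookkeeping: you read off twisted invariance of the annihilator from an explicit spanning set of $K_\r$ built from the characteristic functions of the Reidemeister classes, whereas the paper pulls functionals back to $L^1(G)$ and uses that every functional vanishing on $K$ lies in the $R$-dimensional space $F$.
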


\begin{proof}
Suppose, $\cd_\r\ne 0$ and let us choose a functional $\f_\r$
on the (finite-dimensional full matrix) algebra $\r(L^1(G))$
such that $K_\r\ss \Ker \f_\r$. Then for the corresponding functional
$\f^*_\r=\f_\r\circ\r$ on $L^1(G)$ one has $K\ss \Ker \f^*_\r$.
Hence, $\f^*_\r\in F$ and is twisted-invariant, as well as $\f_\r$.
Then we argue as in the case of compact group (text after Lemma
\ref{lem:funcnamat}).

Conversely, if $\r$ is a fixed point of $\wh \phi$,
it gives rise to a (unique up to scaling) non-trivial
twisted-invariant functional $\f_\r$. Let $x=\r(a)$ be any
element in $\r(L^1(G))$
such that $\f_\r(x)\ne 0$. Then $x\not\in K_\r$, because
$\f^*_\r(a)=\f_\r(x)\ne 0$, while $\f^*_\r$ is a twisted-invariant
functional on $L^1(G)$. So, $\cd_\r\ne 0$.

The uniqueness (up to scaling) of the intertwining operator implies
the uniqueness of the corresponding twisted-invariant functional.
Hence, $\cd_\r=1$.
\end{proof}

Hence,
\begin{equation}\label{eq:gfifix}
\wh G_F = \Fix (\wh \phi).
\end{equation}
From the property $\cd_\r=1$
one obtains for this (unique up to scaling) functional $\f_\r$:
\begin{equation}\label{eq:kerphirho}
\Ker\f_\r=K_\r.
\end{equation}

\begin{lem}\label{lem:RiGF}
$R=\# \wh G_F$, in particular, the set $\wh G_F$ is finite.
\end{lem}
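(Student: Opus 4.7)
The plan is to establish the two inequalities $\#\wh G_F\le R$ and $R\le\#\wh G_F$ separately; together they give the equality, and finiteness of $\wh G_F$ is automatic since $R<\infty$.

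For the upper bound, the previous lemma associates to each $\rho\in\wh G_F$ a (scalar-unique) twisted-invariant functional $\f_\rho^{*} := \f_\rho\circ\rho\in F$. I would prove that the family $\{\f_\rho^{*}\}_{\rho\in\wh G_F}$ is linearly independent in $F$. For any finite collection $\rho_1,\dots,\rho_n$ of pairwise inequivalent irreducible representations of $G$, Theorem \ref{teo:proptypeI} says the $\rho_i$ are finite-dimensional and correspond to distinct points of the $T_1$ space $\wh G$. Since $\rho_i(C^{*}(G))$ is then the full matrix algebra $M_{n_i}(\C)$, the kernel $\Ker\rho_i$ is a maximal ideal of $C^{*}(G)$, and distinct $\rho_i,\rho_j$ give $\Ker\rho_i+\Ker\rho_j=C^{*}(G)$; the Chinese remainder argument then yields surjectivity of $C^{*}(G)\to\prod_i M_{n_i}(\C)$, and since $L^1(G)\ss C^{*}(G)$ is dense while the target is finite-dimensional, the product map $L^1(G)\to\prod_i M_{n_i}(\C)$ is also surjective. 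Consequently any prescribed pattern of values of the $\f_{\rho_i}^{*}$ is realized by some $a\in L^1(G)$, forcing linear independence and $\#\wh G_F\le\dim F=R$.

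For the lower bound I would show $F\ss\Span\{\f_\rho^{*} : \rho\in\wh G_F\}$. The space $F$ is spanned by the characteristic functions $\chi_C$ of the $R$ Reidemeister classes, acting by $f\mapsto\sum_{g\in C}f(g)$. By Remark \ref{rk:typeIareHausd}, $G$ has a normal abelian subgroup $M$ of finite index; replacing $M$ by the finite intersection $\bigcap_n \phi^n(M)$ (which is still of finite index because a finitely generated group has finitely many subgroups of any fixed index) we may assume $\phi(M)=M$. Then $\phi|_M$ is an automorphism and $R(\phi|_M)<\infty$ by (\ref{eq:ozenreidcherpernonabel}), since $\#\Fix\ov\phi\le |G/M|<\infty$. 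For abelian $M$, Lemma \ref{lem:charabelclassred} exhibits each Reidemeister class in $M$ as a coset of a finite-index subgroup, whose characteristic function is a combination of characters of the resulting finite quotient; this verifies the hypothesis of Corollary \ref{lem:osnoskl}, and we conclude that each $\chi_C$ lies in the linear span of matrix coefficients of finitely many irreducible finite-dimensional representations $\sigma_1,\dots,\sigma_N$ of $G$.

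It then remains to pick out the twisted-invariant part of this span. By the same Chinese-remainder/density argument as above, matrix coefficients of pairwise inequivalent $\sigma_j$ are linearly independent, so the span decomposes as a direct sum over equivalence classes. Within the matrix coefficients of a single irreducible $\sigma$, a functional $f\mapsto\Tr(\sigma(f)b)$ is twisted-invariant iff $b=\sigma(\phi(g^{-1}))\,b\,\sigma(g)$ for every $g\in G$, i.e.\ iff $b$ intertwines $\sigma$ with $\sigma\circ\phi$; by Schur's lemma the space of such $b$ is $1$-dimensional exactly when $\sigma\in\Fix(\wh\phi)$ and zero otherwise, exactly as in the argument following Lemma \ref{lem:funcnamat}. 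Summing over $j$, the twisted-invariant subspace of the span equals $\Span\{\f_\rho^{*} : \rho\in\wh G_F\}$, whence $F\ss\Span\{\f_\rho^{*} : \rho\in\wh G_F\}$ and $R\le\#\wh G_F$. The main obstacle is the reduction to Corollary \ref{lem:osnoskl}, and in particular arranging a $\phi$-invariant abelian finite-index subgroup $M$; once that is in place the Schur-plus-independence decomposition is routine and the proof closes.
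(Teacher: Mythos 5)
Your proof is correct, but half of it goes by a genuinely different route than the paper. The direction $R\le \#\wh G_F$ is essentially the paper's own argument: both reduce to Corollary \ref{lem:osnoskl} via a $\phi$-invariant normal abelian subgroup of finite index (the paper merely asserts its existence, while you construct it as $\bigcap_n\phi^n(M)$ --- fine for automorphisms, which is the case at hand since general endomorphisms are treated separately via Lemma \ref{lem:endomfin}), and then both isolate the twisted-invariant part of a span of matrix coefficients by Schur's lemma; you additionally make explicit the independence-of-coefficients step that the paper's phrase ``if one of them gives a non-trivial contribution\dots'' leaves implicit. The real divergence is in the direction $\#\wh G_F\le R$: the paper proves linear independence of the functionals $\f^*_{\r_i}$ analytically, using the $T_1$-separation of $\wh G$ and \cite[Lemma 3.3.3]{DixmierEng} to produce positive elements $a'_i$ with $\|\r_i(a'_i)\|\ge 1$ and $\|\r_j(a'_i)\|<\e$, and then a diagonal-dominance estimate on the matrix $\bigl(\f^*_j(a_i)\bigr)$; you instead argue algebraically, noting that kernels of inequivalent finite-dimensional irreducibles are distinct maximal ideals of the unital algebra $C^*(G)$, so the Chinese remainder theorem plus density of $L^1(G)$ in $C^*(G)$ gives surjectivity of $L^1(G)\to\prod_i M_{n_i}(\C)$ and hence independence. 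Your route is shorter and avoids all norm estimates, leaning entirely on Thoma's finite-dimensionality theorem (in effect it reproves the classical independence of matrix coefficients of inequivalent finite-dimensional irreducible representations), whereas the paper's quantitative, separation-based argument is the one whose spirit survives in the later operator-algebraic generalizations on the Glimm spectrum (Section \ref{sec:invfunsme}), where finite-dimensionality is unavailable. One cosmetic slip: the twisted-invariant part of your span is contained in, not necessarily equal to, $\Span\{\f^*_\rho\,:\,\rho\in\wh G_F\}$, since not every fixed point need occur among the $\sigma_j$; but the inclusion is all you actually use.
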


\begin{proof}
First of all we remark that since $G$ is finitely generated
almost Abelian
(cf. Remark \ref{rk:typeIareHausd}) there is a normal Abelian subgroup
$H$ of finite index invariant under $\phi$. Hence
we can apply Lemma \ref{lem:osnoskl} to $G$, $H$, $\phi$.
So there is  a finite collection of irreducible
representations of $G$ such that
any twisted-invariant functional is a linear combination of
matrix elements of them, i.e. linear combination of functionals on
them. If one of them gives a non-trivial contribution, it has
to be a twisted-invariant functional on the corresponding matrix
algebra. Hence, by the argument above, these representations belong
to $\wh G_F$, and the appropriate functional is unique up to scaling.
Hence, $R\le S$.

Then we use $T_1$-separation property. More precisely,
suppose some points $\r_1,\dots,\r_s$ belong to $\wh G_F$.
Let us choose some twisted-invariant functionals
$\f_i=\f_{\r_i}$ corresponding to these points as it was
described
(i.e. choose some scaling).
Assume that $\|\f_i\|=1$,
$\f_i(x_i)=1$, $x_i\in\r_i(L^1(G))$.
If we can find $a_i\in L^1(G)$ such that
$\f_i(\r_i(a_i))=\f^*_i(a_i)$
is sufficiently large
and $\r_j(a_i)$, $i\ne j$, are sufficiently
small (in fact it is sufficient $\r_j(a_i)$ to be close enough
to $K_j:=K_{\r_j}$),
then $\f^*_j(a_i)$ are small for $i\ne j$,
and $\f^*_i$ are linear independent and hence,
$s<R$.  This would imply $S:=\# \wh G_F \le R$ is finite.
Hence, $R=S$.

So, the problem is reduced to the search of the above  $a_i$.
Let $d=\max\limits_{i=1,\dots,s} \dim \r_i$.
For each $i$ let $c_i:=\|b_i\|$, where $x_i$ is the unitary equivalence
of $\r_i$ and $\wh\phi \r_i$ and $x_i=\r_i(b_i)$.

Let $c:=\max\limits_{i=1,\dots,s} c_i$
and $\e:=\frac 1{2\cdot s^2\cdot d \cdot c}$.

One can find a positive element $a'_i\in L^1(G)$ such that
$\|\r_i(a'_i)\|\ge 1$ and $\|\r_j(a'_i)\|<\e $ for $j\ne i$.
Indeed, $\r_i$ can be separated from one point, and hence
from the finite number of points:
$\r_j$, $j\ne i$. Hence, one can find
an element $v_i$ such that $\|\r_i(v_i)\|>1$, $\|\r_j(v_i)\|<1$
for $j\ne i$ \cite[Lemma 3.3.3]{DixmierEng}. The same is true for
the positive element $u_i=v_i^*v_i$.
(Due to density we do not distinguish elements of $L^1$ and $C^*$).
Now for a sufficiently large
$n$ the element $a'_i:=(u_i)^n$ has the desired properties.

Let us take $a_i:=a'_i b_i^*$. Then
\begin{multline}\label{eq:ozendiag}
\f^*_i(a_i)=\Tr(x_i \r_i(a_i))=\Tr (x_i \r_i(a'_i) \r_i(b_i)^*)=
\Tr (x_i \r_i(a'_i) x_i^*)\\
=\Tr(x_i \r_i(a'_i) (x_i)^{-1})
=\Tr( \r_i(a'_i))\ge \frac 1{\dim\r_i}
\ge \frac 1d.
\end{multline}
For $j\ne i$ one has
\begin{equation}\label{eq:ozenvnediag}
\|\f^*_j(a_i)\|=\|\f_j(\r_j(a'_i b_i^*))\|\le c_i \cdot \e.
\end{equation}
Then the $s\times s$ matrix $\F=\f^*_j(a_i)$ can be decomposed into the
sum of the diagonal matrix $\Delta$ and off-diagonal $\Sigma$. By
(\ref{eq:ozendiag}) one has
$\Delta\ge \frac 1d$. By (\ref{eq:ozenvnediag}) one has
$$
\|\Sigma\|\le s^2 \cdot c_i \cdot \e \le s^2 \cdot c \cdot
\frac 1{2\cdot s^2\cdot d \cdot c}= \frac 1{2d}.
$$
Hence, $\F$ is non-degenerate and we are done.
\end{proof}

Lemma \ref{lem:RiGF} together with (\ref{eq:gfifix})
completes the proof of
Theorem \ref{teo:mainth1} for automorphisms.

We need the
following additional observations
for the proof of Theorem \ref{teo:mainth1} for
a general endomorphism
(in which (3) is false for
infinite-dimensional representations).

\begin{lem}\label{lem:endomfin}
\begin{enumerate}
\item If $\phi$ is an epimorphism, then $\wh G$ is $\wh\phi_R$-invariant.

\item For any $\phi$ the set $\Rep(G)\setminus \wh G$ is $\wh\phi_R$-invariant.

\item The dimension of the space of intertwining operators between
$\r\in\wh G$ and $\wh\phi_R(\r)$ is equal to $1$ if and only if
$\r\in \Fix(\wh\phi)$. Otherwise it is $0$.

\end{enumerate}
\end{lem}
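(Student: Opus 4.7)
Parts (1) and (2) both come from tracking how the operator sets $\r(G)$ and $\r(\phi(G))$ compare. For (1), when $\phi$ is an epimorphism, $\r(\phi(G))=\r(G)$, so the commutant $\r(\phi(G))'$ equals $\r(G)'=\C\cdot\Id$ by Schur, giving irreducibility of $\r\circ\phi$. For (2), any $\r$-invariant subspace $W\subset V_\r$ is automatically $\r\circ\phi$-invariant, since $\r(\phi(g))W\subset W$ because $\r(\phi(g))\in\r(G)$. Thus a non-trivial reducing subspace for $\r$ yields one for $\r\circ\phi$, so the complement $\Rep(G)\setminus\wh G$ is $\wh\phi_R$-stable.

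The main content is (3). If $\r\in\Fix(\wh\phi)$, then $\r\circ\phi\cong\r$ and both are irreducible, so Schur's lemma gives $\Hom(\r,\r\circ\phi)\cong\C$, a one-dimensional space. For the converse I would argue by contrapositive: assume $\Hom(\r,\r\circ\phi)\ne 0$ and pick a nonzero intertwiner $T\colon V_\r\to V_\r$ (both copies are the same underlying finite-dimensional Hilbert space). The subspace $\ker T\subset V_\r$ is $\r$-invariant because $T\r(g)v=\r(\phi(g))Tv=0$ for $v\in\ker T$. By irreducibility of $\r$, $\ker T=0$, hence $T$ is injective and, as $\dim V_\r<\infty$, an isomorphism. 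If $W\subset V_\r$ is any $\r\circ\phi$-invariant subspace, then $T^{-1}(W)$ is $\r$-invariant by the same intertwining relation, and irreducibility of $\r$ forces $W=0$ or $W=V_\r$. So $\r\circ\phi$ is irreducible and is carried onto $\r$ by $T$, placing $\r$ in $\Fix(\wh\phi)$. This proves the ``otherwise'' clause of (3), and the dimension count in the fixed-point case was already settled by Schur.

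The argument is elementary linear algebra together with Schur's lemma; the only step worth flagging is the kernel/image analysis that simultaneously upgrades a nonzero intertwiner to an isomorphism and forces $\r\circ\phi$ to be irreducible. This is the observation that turns (3) into a genuine dichotomy rather than merely the classical Schur statement, and it is also what ties (3) back to (2): the existence of a nonzero intertwiner, combined with irreducibility of $\r$, rules out a priori the possibility that $\r\circ\phi$ lies in $\Rep(G)\setminus\wh G$.
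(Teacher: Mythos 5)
Your proof is correct, and your arguments for parts (1) and (2) are essentially the paper's own (one-line) argument: irreducibility characterized via the commutant being scalar, respectively via invariant subspaces being trivial. For part (3), however, you take a genuinely different route. The paper decomposes $\wh\phi_R(\r)$ into irreducible summands and counts dimensions: the intertwining number equals the multiplicity of $\r$ in that decomposition, and since $\dim H_\r=\dim H_{\wh\phi_R(\r)}$, a nonzero multiplicity forces $\wh\phi_R(\r)$ to be equivalent to $\r$ with multiplicity one, hence $\r\in\Fix(\wh\phi)$. You avoid complete reducibility altogether: starting from a single nonzero intertwiner $T$, you run the Schur-lemma kernel/image analysis directly, concluding that $T$ is invertible and that $\r\circ\phi$ is irreducible, which yields the same dichotomy. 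Your version is more self-contained (it re-proves the relevant Schur dichotomy rather than invoking the fact that intertwining numbers compute multiplicities in a decomposition), at the cost of being somewhat longer; the paper's version is shorter but presupposes standard multiplicity theory for finite-dimensional unitary representations. One small point you leave implicit: your $T$ is invertible but not a priori unitary, while $\Rep(G)$ consists of \emph{unitary} equivalence classes, so to place $\r$ in $\Fix(\wh\phi)$ you need the routine upgrade (by Schur, $T^*T$ commutes with $\r$ and is therefore a positive scalar, so a suitable multiple of $T$ is a unitary intertwiner). This is standard and does not affect correctness.
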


\begin{proof}
(1) and (2):
This follows from the characterization of irreducible representation
as that one for which the centralizer of $\r(G)$ consists exactly of
scalar operators.

(3) Let us decompose $\wh\phi_R(\r)$ into irreducible ones. Since
$\dim H_\r=\dim H_{\wh\phi(\r)}$ one has only 2 possibilities:
$\r$ does not appear  in $\wh\phi(\r)$ and the intertwining number is $0$,
otherwise $\wh\phi_R(\r)$ is equivalent to $\r$. In this case
$\r\in \Fix(\wh\phi)$.
\end{proof}

The proof of Theorem \ref{teo:mainth1} can be now repeated
for the general endomorphism
with the new definition of $\Fix(\wh\phi)$. The item (3) supplies us
with the necessary property.

\section{Examples and their discussion}\label{sec:exampdisc}

The natural candidate for the dual object to be used  instead of
$\wh G$ in the case when the different notions of the dual do not
coincide (i.e. for groups more general than type I one groups) is
the so-called quasi-dual $\hk G$, i.e. the set of
quasi-equivalence classes of factor-representations (see, e.g.
\cite{DixmierEng}). This is a usual object when we need a sort of
canonical decomposition for regular representation or group
$C^*$-algebra. More precisely, one needs the support $\hk G_p $
of the Plancherel measure.

Unfortunately the following example shows that this is not the case.

\begin{ex} Let $G$ be a non-elementary Gromov hyperbolic group.
As it was shown by Fel'shtyn \cite{FelPOMI} with the
help of geometrical methods,
for any automorphism $\phi$ of $G$ the Reidemeister number $R(\phi)$
 is infinite. In particular this is true for free group in two generators
$F_2$. But the support $(\hk {F_2})_p $ consists of one point
(i.e. regular representation is factorial).
\end{ex}

The next hope was to exclude from this dual object the $II_1$-points
assuming that they always give rise to an infinite number of twisted
invariant functionals.
But this is also wrong:
\begin{ex}\label{ex:FGW} (an idea of Fel'shtyn realized in \cite{gowon})
Let $G=(Z\oplus Z)\rtimes_\theta  Z $ be the semi-direct product by a
hyperbolic action $\theta(1)=\Mat 2111$. Let
$\phi $
be an automorphism of $G$ whose restriction to  $Z$ is $-id$ and restriction
to  $Z\oplus Z$ is $\Mat 01{-1}0$.
Then $R(\phi)=4$, while the space $\hk G_p $
consists of a single $\rm II_1$-point once again (cf. \cite[p.~94]{ConnesNCG}).
\end{ex}

These examples show that powerful methods of the decomposition
theory do not work for more general classes of groups.

On the other hand Example \ref{ex:FGW}
disproves the old conjecture of Fel'shtyn
and Hill \cite{FelHill} who supposed that the Reidemeister numbers of
an injective endomorphism
for groups of exponential growth are always infinite.
More precisely, this group is amenable and of exponential growth.
Also, one has the following example
\begin{ex}
In \cite{Osin} D.~Osin has constructed an infinite finitely generated
group, which is not amenable, contains the free group in two generators,
and has two (ordinary) conjugacy classes.
\end{ex}
The role of this example will be clarified in Section \ref{sec:polyc}.

In this relation  the following example (to be discussed in Section
\ref{sec:BaumSolit})
seems to be interesting.
\begin{ex}\cite{FelGon}
For amenable and non-amenable Baumslag-Solitar groups Reidemeister
numbers are always infinite.
\end{ex}

For Example \ref{ex:FGW} recently we have found 4 fixed points of
$\wh\phi$ being finite dimensional irreducible
representations. They give rise to
4 linear independent twisted invariant functionals. These
functionals  can also be
obtained from the regular factorial representation. There  also exist
fixed points (at least one)
that are  infinite dimensional irreducible
representations. The corresponding
functionals are evidently linear dependent with the first 4. This
example will be presented in detail in Section \ref{sec:vershik}.

 \section{An example for type II${}_1$ groups}\label{sec:vershik}

In the present section based on \cite{FelTroVer}
it is shown that the twisted Burnside theorem (or Fel'shtyn-Hill
conjecture) in the original formulation with $\wh G$ as a dual
object is not true for non-type I groups.
More precisely, an example of a group and its automorphism is
constructed such that the number of fixed irreducible
representations is greater than the Reidemeister number. But the
number of fixed finite-dimensional representations
(i.e. the number of invariant finite-dimensional characters)
in this example coincides with the Reidemeister number.
The directions for search of an appropriate formulation are indicated
(another definition of the dual object). Some advances in this
direction will be made in the next sections.

\medskip
Let $G$ be a semidirect product of $\Z^2$ and $\Z$ by Anosov automorphism
$\a$
with the matrix $A=\Mat 2111$. It consists by the definition of triples
$((m,k),n)$ of integers with the following multiplication low:
$$
((m,k),n)*((m',k'),n')=((m,k)+\a^n(m',k'),n+n').
$$
In particular,
$$
((m,k),0)*((0,0),n)=((m,k),n).
$$
The inverse of $((m,k),n)$ is $(-\a^{-n}(m,k),-n)$. Indeed,
$$
((m,k),n)*(-\a^{-n}(m,k),-n)=((m,k)-\a^n \a^{-n}(m,k),n-n)=((0,0),0).
$$

The group $G$ is a solvable (hence, amenable) group which is not
of type I. Its regular representation is factorial. The irreducible
representations can be obtained from ergodic orbits of the action of
$\a$ on the torus $\T^2$ which is dual for the normal subgroup $\Z^2$
using appropriate cocycles \cite[Sec.~II.4]{ConnesNCG},
\cite[Ch.~17, \S~1]{baron}.

Let us define an automorphism $\phi:G\to G$ by
$$
\phi((m,k),n)=((k,-m),-n),
$$
i.e. the action on $\Z^2$ is defined by automorphism $\mu$ with the matrix
$M=\Mat 01{-1}0$, and on $\Z$ by $n\mapsto -n$. The map $\phi$ is clearly
a bijection,
\begin{multline*}
\phi(((m,k),n)*((m',k'),n')=\phi((m,k)+\a^n(m',k'),n+n'))\\
=
\phi((k,-m)+\mu\a^n(m',k'),-n-n'),
\end{multline*}
\begin{multline*}
\phi((m,k),n)*\phi(((m',k'),n')=((k,-m),-n)*((k',-m'),-n') \\
=((k,-m)+\a^{-n}(k',-m'),-n-n').
\end{multline*}
Hence, to prove that $\phi$ is an automorphism, we need $\mu\a^n=\a^{-n}\mu$.
This follows from $\mu\a=\a^{-1}\mu$. The further results in this direction
can be found in \cite{gowon}.

Let us find the Reidemeister classes of $\phi$, i.e. the classes of the
equivalence relation $h\sim gh\phi(g^{-1})$. For $h=((m,k),n)$ and
$g=((x,y),z)$ the right hand side of the relation takes the following
form:
\begin{multline}\label{eq:formconj}
((x,y)+ \a^z(m,k),z+n)*(-\mu\a^{-z}(x,y),z) \\
=((x,y)+ \a^z(m,k)-\a^{z+n}\mu\a^{-z}(x,y),2z+n) \\
=(\a^z\{(m,k) + (\Id - \a^n\mu) \a^{-z}(x,y)\}, 2z+n).
\end{multline}
Let us call {\em level} $n$ (of $G$) the coset $L_n$ of $\Z^2\ss G$
of all elements of the form $((m,k),n)$. Let us first take an element
$((m,k),0)$ from the level $0$ and describe elements from the same
level, being equivalent to it. By (\ref{eq:formconj}) in this case
$z=n=0$ and they have the form
$$
((m,k) + (\Id - \mu)(x,y), 0)=((m + (x-y),k + (x+y)),0),
$$
where $\Id - \mu$ has the matrix $\Mat 1{-1}11$. Hence, the level $0$
has intersections with 2 Reidemeister classes, say, $B_1$ and $B_2$.
The first intersection $B_1\cap L_0$
is formed by elements $((u,v),0)$ with even $u+v$,
and $B_2\cap L_0$ --- with odd $u+v$. The elements from the other
levels, which are equivalent to $((m,k),0)$, have the form
\begin{equation}\label{eq:even1}
(\a^z\{(m,k) + (\Id - \mu) \a^{-z}(x,y)\}, 2z).
\end{equation}
This means that $B_1$ and $B_2$ enter only even levels.
Also, since $\a$ is an automorphism, we can rewrite (\ref{eq:even1})
as
\begin{equation}\label{eq:even2}
(\a^z\{(m,k) + (\Id - \mu)(u,v)\}, 2z).
\end{equation}
with arbitrary integers $u$ and $v$. This means, that the intersections
$B_i\cap L_{2z}$ have the form $\a^z(B_i)$, $i=1,2$. In particular,
the other Reidemeister classes do not enter even levels.

In a similar way, the elements of $L_1$ equivalent to $((m,k),1)$
have the form
$$
((m,k) + (\Id - \a\mu)(x,y), 1)=((m + (2x-2y),k + x),1).
$$
This means, that $L_1$ enters 2 classes: the intersection with $B_3$ is formed by elements
with even first coordinate, and with $B_4$ --- with the odd one.
The elements from the other
levels, which are equivalent to $((m,k),1)$, have the form
\begin{equation}\label{eq:odd1}
(\a^z\{(m,k) + (\Id - \a\mu) \a^{-z}(x,y)\}, 2z+1).
\end{equation}
Since $\a$ is an automorphism, we can rewrite (\ref{eq:odd1})
as
\begin{equation}\label{eq:odd2}
(\a^z\{(m,k) + (\Id - \a \mu)(u,v)\}, 2z+1).
\end{equation}
with arbitrary integers $u$ and $v$. This means, that the intersections
$B_i\cap L_{2z+1}$ have the form $\a^z(B_i)$, $i=3,4$. In particular,
these four classes cover $G$ and $R(\phi)=4$.

To obtain a complete description of $B_i$ let us remark that directly
from the definition of $\a$
$$
\a(x,y)=(2x+y,x+y)
$$
one has the following properties:

\begin{itemize}
\item $\a$ maps the set of elements with an even
(resp., odd) sum of coordinates onto the set of elements with an even
(resp., odd) second coordinate,

\item $\a$ maps the set of elements   with an even
(resp., odd) second coordinate onto the set of elements with an even
(resp., odd) first coordinate,

\item $\a$ maps the set of elements with an even
(resp., odd) first coordinate
 onto the set of elements with an even
(resp., odd) sum of coordinates.
\end{itemize}

Hence, the elements $((m,k),n)$ in intersections $B_i\cap L_j$
are of the form

\begin{center}
\begin{tabular}{|c|c|c|c|c|}
\hline
$i$  & 1 & 2 & 3 & 4 \\
\hline
\hline
$j\equiv 0 \mod 6$ & $m+k$ is even &$m+k$ is odd & $\emptyset$ & $\emptyset$ \\
\hline
$j\equiv 1 \mod 6$ &$\emptyset$ &$\emptyset$ & $m$ is even & $m$ is odd \\
\hline
$j\equiv 2 \mod 6$ & $k$ is even & $k$ is odd & $\emptyset$ & $\emptyset$ \\
\hline
$j\equiv 3 \mod 6$ &$\emptyset$ &$\emptyset$  & $m+k$ is even & $m+k$ is odd \\
\hline
$j\equiv 4 \mod 6$ & $m$ is even & $m$ is odd & $\emptyset$ & $\emptyset$ \\
\hline
$j\equiv 5 \mod 6$ &$\emptyset$ &$\emptyset$ & $k$ is even & $k$ is odd \\
\hline
\end{tabular}
\end{center}

Now we want to study the fixed points of the homeomorphism
$\wh\phi:\wh G\to \wh G$, $[\r]\mapsto [\r\phi]$ of the unitary dual.
Let us start from the finite-dimensional representations. As it was
shown in \cite{FelTro} (see also Section \ref{sec:compcase} and after)
in this case there exists exactly one twisted-invariant
functional on $L^1(G)$, or $\phi$-central $L^\infty$ function, coming
from a twisted-invariant functional on $\r(L^1(G))\cong M(\dim \r,\C)$
(up to scaling), namely
\begin{equation}\label{eq:trsg}
\f_\r:g\mapsto \Tr(S\r(g)),
\end{equation}
where $S$ is the intertwining operator between $\r$ and $\r\phi$.

First, we have to find $\mu$-invariant finite $\a$-orbits on $\T^2$.
One can notice that
$$
\det(A^n-M)=\det A^n + 1=2
$$
for any $n$. Hence, the mentioned orbits are formed by points with
coordinates $0$ and $1/2$. We have $2$ orbits: one of them consists of
$1$ point $(0,0)$ and gives rise to $1$-dimensional trivial representation
$\r_1$,
and the other consists of $A_1=(0,1/2)$, $A_2=(1/2,0)$ and
$A_3=(1/2,1/2)$ and
gives rise to a $3$-dimensional (irreducible) representation $\r_2$.
Also, one has the following $1$-dimensional representation $\pi$:
$$
\pi((m,k),2n)=1,\qquad \pi((m,k),2n+1)=-1.
$$
So, we have $4$ representations
$$
\r_1,\quad \r_2, \quad \pi,\quad \r_2 \otimes \pi.
$$
We claim that they give rise via (\ref{eq:trsg}) to $4$ linear independent
twisted-invariant functionals. In particular, there is no more
finite-dimensional fixed points of $\wh\phi$.

Clearly,
\begin{equation}\label{eq:trsg1}
\f_{\r_1}\equiv 1, \quad \f_\pi =\left\{
\begin{array}{rl}
1, & \mbox{ on } \cup_n L_{2n}=B_1\cup B_2,\\
-1,& \mbox{ on } \cup_nL_{2n+1}=B_3\cup B_4,
\end{array}
\right.
\quad
\f_{\r_2\otimes \pi}=\f_{\r_2}\cdot \f_\pi.
\end{equation}
Let us find $\f_{\r_2}$. In the space $L^2(A_1,A_2,A_3)$ we take the base
$\e_1$, $\e_2$, $\e_3$ of characteristic functions in these points.
One has
$$
\a(\e_1)=\e_3,\qquad \a(\e_3)=\e_2,\qquad \a(\e_2)=\e_1,
$$
$$
\mu(\e_1)=\e_2,\qquad \mu(\e_2)=\e_1, \qquad \mu(\e_3)=\e_3.
$$
The representation (see \cite[Ch.~17, \S~1]{baron}) is defined by:
$$
\r_2(m,k,0)(\e_i)=\chi_{A_i}(m,k)\cdot \e_i,\qquad
\r_2(0,0,n)(\e_i)= \a^{-n} (\e_i)=\e_{i+n \mod 3},
$$
$$
\r_2(m,k,0)(\e_1)=e^{2\pi i(0\cdot m+ 1/2\cdot k)}\cdot \e_1
= e^{\pi i k} \e_1,
$$
$$
\r_2(m,k,0)(\e_2)=e^{2\pi i(1/2 \cdot m+ 0\cdot k)}\cdot \e_2
= e^{\pi i m} \e_2,
$$
$$
\r_2(m,k,0)(\e_3)=e^{2\pi i(1/2 \cdot m+ 1/2\cdot k)}\cdot \e_3
= e^{\pi i (m+k)} \e_3.
$$
The representation $\wh\phi\r_2$ is defined by
$$
\wh\phi\r_2(0,0,n)(\e_i)=\r_2(0,0,-n)(\e_i)
=\e_{i-n \mod 3},
$$
$$
\wh\phi\r_2(m,k,0)(\e_1)=\r_2(k,-m,0)(\e_1)
= e^{- \pi i m} \e_1= e^{\pi i m} \e_1,
$$
$$
\wh\phi\r_2(m,k,0)(\e_2)=\r_2(k,-m,0)(\e_2)
= e^{\pi i k} \e_2,
$$
$$
\wh\phi
\r_2(m,k,0)(\e_3)=\r_2(k,-m,0)(\e_3)
= e^{\pi i (k-m)} \e_3= e^{\pi i (m+k)} \e_3.
$$
The intertwining operator is induced by $\phi$ and has
the matrix $S=\Matt 010100001$. Hence,
$$
\f_{\r_2}(m,k,n)=\Tr(S\r_2(m,k,0)\r_2(0,0,n))
$$
$$
=
\Tr\left[ \Matt 010100001
\Matt {e^{\pi i k}}000{e^{\pi i m}}000{e^{\pi i(k+m)}}
\Matt 001100010 ^n
\right]
$$
$$
=
\Tr\left[
\Matt 0{e^{\pi i k}}0{e^{\pi i k}}0000{e^{\pi i(k+m)}}
{\Matt 001100010}^n
\right].
$$
For $n\equiv 0\mod 3$
$$
\f_{\r_2}(m,k,n)=\Tr\left[
\Matt 0{e^{\pi i m}}0{e^{\pi i k}}0000{e^{\pi i(k+m)}}
\right]=e^{\pi i(k+m)}=\left\{
\begin{array}{rl}
1,& \mbox{ if } m+k \mbox{ is even }\\
-1,& \mbox{ if } m+k \mbox{ is odd }
\end{array}
\right. ,
$$
for $n\equiv 1\mod 3$
$$
\f_{\r_2}(m,k,n)=\Tr\left[
\Matt 0{e^{\pi i m}}0{e^{\pi i k}}0000{e^{\pi i(k+m)}}
\Matt 001100010
\right]
$$
$$
=
\Tr\Matt {e^{\pi i m}}0000{e^{\pi i k}}0{e^{\pi i(k+m)}}0
=e^{\pi i m}=\left\{
\begin{array}{rl}
1,& \mbox{ if } m \mbox{ is even }\\
-1,& \mbox{ if } m \mbox{ is odd }
\end{array}
\right. ,
$$
for $n\equiv 2\mod 3$
$$
\f_{\r_2}(m,k,n)=\Tr\left[
\Matt 0{e^{\pi i m}}0{e^{\pi i k}}0000{e^{\pi i(k+m)}}
\Matt 010001100
\right]=
$$
$$
=
\Tr\Matt 00{e^{\pi i m}}0{e^{\pi i k}}0{e^{\pi i(k+m)}}00
=e^{\pi i k}=\left\{
\begin{array}{rl}
1,& \mbox{ if } k \mbox{ is even }\\
-1,& \mbox{ if } k \mbox{ is odd }
\end{array}
\right. .
$$
$\f_{\r_2}$ is $3$-periodical in $n$, while the characteristic functions
of $B_i$ are $6$-periodical. For $j=0,\dots,5$ one has
$$
\begin{array}{llll}
\f_{\r_2}|_{B_1\cap L_0}\equiv 1, &
\f_{\r_2}|_{B_2\cap L_0}\equiv -1, &
\f_{\r_2}|_{B_3\cap L_1}\equiv 1, &
\f_{\r_2}|_{B_4\cap L_1}\equiv -1, \\
\f_{\r_2}|_{B_1\cap L_2}\equiv 1, &
\f_{\r_2}|_{B_2\cap L_2}\equiv -1, &
\f_{\r_2}|_{B_3\cap L_3}\equiv 1, &
\f_{\r_2}|_{B_4\cap L_3}\equiv -1, \\
\f_{\r_2}|_{B_1\cap L_4}\equiv 1, &
\f_{\r_2}|_{B_2\cap L_4}\equiv -1, &
\f_{\r_2}|_{B_3\cap L_5}\equiv 1, &
\f_{\r_2}|_{B_4\cap L_5}\equiv -1,
\end{array}
$$
so $\f_{\r_2}|_{B_1\cup B_3}\equiv 1$, $\f_{\r_2}|_{B_2\cup B_4}\equiv -1$.
The determinant of the values of the functions $\f_{\r_1}$, $\f_\pi$,
$\f_{\r_2}$, $\f_{\r_2\otimes\pi}$ on $B_1$, $B_2$, $B_3$, $B_4$ is
$$
\det \left(
\begin{array}{rrrr}
1 & 1 & 1 & 1 \\
1 & 1 & -1 & -1 \\
1 & -1 & 1 & -1 \\
1 & -1 & -1 & 1
\end{array}
\right)
=-8\ne 0.
$$
Hence, they are linearly independent.

Nevertheless, there are infinite-dimensional irreducible
$\wh\phi$-invariant representations. E.g. we have a representation
$\r$ of $G$ on $L^2(\T^2)$ with the respect to the Lebesgue measure,
with $\r(m,k,0)$ be the multiplier by characters in the appropriate
points and $\r(0,0,1)$ is $\a$, in the same manner as for $\r_2$.

This disproves the conjecture of Fel'shtyn and Hill \cite{FelHill},
who supposed
that the Reidemeister number equals to the number of fixed points of
$\wh \phi$ on $\wh G$.

This representation is not traceable, but
one can nevertheless try to calculate (\ref{eq:trsg}). Let us choose
an orthonormal base of $L^2(\T^2)$ formed by
$\e_{st}(x,y)=e^{2\pi i(sx+ty)}$, $x,y\in [0,1]$. The intertwining
operator $S$ is generated by $\mu$. Then
$$
\Tr(S\r(m,k,n))=\sum_{s,t} \int_0^1\int_0^1 (\r(m,k,0)
\r(0,0,n) \e_{s,t})(\mu(x,y))
\ov{\e_{s,t}}(x,y) \,dx\,dy
$$
$$
=\sum_{s,t} \int_0^1\int_0^1 e^{2\pi i \<(m,k),\mu(x,y)\>}
(\r(0,0,n) \e_{s,t})(\mu(x,y))
\e_{-s,-t}(x,y) \,dx\,dy
$$
$$
=\sum_{s,t} \int_0^1\int_0^1 e^{2\pi i \<(m,k),\mu(x,y)\>}
\e_{s,t}(\a^{n}\mu(x,y))
\e_{-s,-t}(x,y) \,dx\,dy
$$
$$
=\sum_{s,t} \int_0^1\int_0^1
e^{2\pi i \<(m,k)+(\a^{n}-\mu)(s,t),\mu(x,y)\>} \,dx\,dy=
\sum_{(m,k)=(\mu-\a^n)(s,t)} 1.
$$
For $n=0$ this equals $1$ for $m+k$ even and $0$ for $m+k$ odd.
Hence, $\f_\r|_{B_1\cap L_0}=1$, $\f_\r|_{B_2\cap L_0}=0$. For $n=1$
the equality takes the form $(m,k)=(-2s-2t,-t)$. Hence,
$\f_\r|_{B_3\cap L_1}=1$, $\f_\r|_{B_4\cap L_1}=0$.
If $n=2r$, the equality takes form
$$
(m,k)=(\mu-\a^{2r})(s,t),\quad
(m,k)=(\a^r \mu \a^r -\a^{2r})(s,t),
$$
$$
\a^{-r} (m,k)=(\mu-\a^0) \a^{r}(s,t).
$$
Since $\a$ is an automorphism of $\Z\oplus\Z$ and by the description
of $B_i$ via the action of $\a$, we obtain that
$\f_\r|_{B_1}=1$, $\f_\r|_{B_2}=0$. Similarly, for $n$ odd.
So, $\f_\r$ is well defined and
$$
\f_\r=\chi_{B_1}+\chi_{B_3}=\frac 12 (\f_{\r_1}+\f_{\r_2}).
$$

Of course, there are also $\wh\phi$-invariant traceable factor
representations of this group $G$, e.g. the regular representation.
Since its kernel is trivial, evidently all twisted-invariant
functionals can be pushed back from it.

This example shows that the conjecture of \cite{FelHill} for general
groups can survive only after eliminating badly separated points in
$\wh G$.

\section{Algebras of operator fields}\label{sec:algfiel}

Let us first remind the theory of operator fields following
\cite{Fell}. Let $T$ be a topological space and for each point $t\in T$
a $C^*$-algebra (or more general --- involutive Banach algebra)
$A_t$ is fixed.

\begin{dfn}
{\em A continuity structure for $T$ and the $\{A_t\}$\/} is a linear
space $F$
of operator fields on $T$, with values in the $\{A_t\}$,
(i.e. maps sending $t\in T$ to an element of $A_t$),
satisfying
\begin{enumerate}
\item if $x\in F$, the real-valued function $t\mapsto \|x(t)\|$ is continuous
on $T$;
\item for each $t\in T$, $\{x(t)\,|\,x\in F\}$ is dense in $A_t$;
\item $F$ is closed under pointwise multiplication and involution.
\end{enumerate}
\end{dfn}

\begin{dfn}
An operator field $x$ is {\em continuous\/} with respect to $F$ at $t_0$,
if for each $\e>0$, there is an element $y\in F$ and a neighborhood $U$
of $t_0$ such that $\|x(t)-y(t)\|<\e$ for all $t\in U$. The field $x$ is
{\em continuous on\/} $T$ if it is continuous at all points of $T$.
\end{dfn}

\begin{dfn}
A {\em full algebra of operator fields\/} is a family $A$ of operator
fields on $T$ satisfying:
\begin{enumerate}
\item $A$ is a *-algebra, i.e., it is closed under all the pointwise
algebraic operations;
\item for each $x\in A$, the function $t\mapsto \|x(t)\|$ is continuous
on $T$ and vanishes at infinity;
\item for each $t$, $\{x(t)\; | \; x\in A\}$ is dense in $A_t$;
\item $A$ is complete in the norm $\|x\|=\sup\limits_t\|x(t)\|$.
\end{enumerate}

\end{dfn}

A full algebra of operator fields is evidently a continuity
structure. If $F$ is any continuity structure, let us define
$C_0(F)$ to be the family of all operator fields $x$ which are
continuous on $T$ with respect to $F$, and for which $t\mapsto
\|x\|$ vanishes at infinity. One can prove that
$C_0(F)$ is a full algebra of operator fields --- indeed, a
maximal one.

\begin{lem}
For any full algebra $A$ of operator fields on $T$, the following three
conditions are equivalent:
\begin{enumerate}
\item $A$ is a maximal full algebra of operator fields;
\item $A=C_0(F)$ for some continuity structure $F$;
\item $A=C_0(A)$.
\end{enumerate}
\end{lem}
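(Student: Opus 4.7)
The plan is to establish the two equivalences $(3) \Leftrightarrow (2)$ and $(3) \Leftrightarrow (1)$ separately; two of the four implications are essentially immediate, and the genuine content is packaged into a single two-step approximation argument.

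First, $(3) \Rightarrow (2)$ is trivial: a full algebra $A$ is itself a continuity structure, so one may simply take $F = A$. For $(2) \Rightarrow (3)$ I show $C_0(A) = A = C_0(F)$. The inclusion $A \subseteq C_0(A)$ is immediate, since each $x \in A$ is $A$-continuous at every point (take $y = x$) and vanishes at infinity by the definition of a full algebra. For the converse, fix $x \in C_0(A)$, $t_0 \in T$ and $\e > 0$. By $A$-continuity there are $y \in A$ and a neighbourhood $U$ of $t_0$ with $\|x(t) - y(t)\| < \e/2$ on $U$; since $y \in A = C_0(F)$, $y$ is itself $F$-continuous, so on some smaller neighbourhood $V \subseteq U$ there is $z \in F$ with $\|y(t) - z(t)\| < \e/2$. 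The triangle inequality gives $\|x(t) - z(t)\| < \e$ on $V$, so $x$ is $F$-continuous and lies in $C_0(F) = A$.

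The same two-step idea yields $(3) \Rightarrow (1)$. Assuming $A = C_0(A)$, let $B \supseteq A$ be a full algebra; I show $B \subseteq A$. Given $x \in B$, $t_0 \in T$ and $\e > 0$, I use pointwise density of the full algebra $A$ at $t_0$ to pick $y \in A$ with $\|x(t_0) - y(t_0)\| < \e/2$. Now $x - y \in B$, so the function $t \mapsto \|x(t) - y(t)\|$ is continuous on $T$, and some neighbourhood $U$ of $t_0$ satisfies $\|x(t) - y(t)\| < \e$ on $U$. Hence $x$ is $A$-continuous, and vanishing of $\|x(t)\|$ at infinity is inherited from $B$, so $x \in C_0(A) = A$. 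This proves maximality.

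Finally, for $(1) \Rightarrow (3)$ it suffices to verify that $C_0(A)$ is itself a full algebra of operator fields, because then $A \subseteq C_0(A)$ together with maximality forces $A = C_0(A)$. Closure under the $*$-algebra operations is a routine pointwise check using that $A$ is a $*$-algebra; continuity of $t \mapsto \|x(t)\|$ for $x \in C_0(A)$ follows by locally approximating $x$ by elements of $A$ (using $|\,\|x(t)\|-\|y(t)\|\,| \le \|x(t)-y(t)\|$), while vanishing at infinity is built into the definition of $C_0$; pointwise density of $\{x(t) : x \in C_0(A)\}$ in $A_t$ is inherited from $A \subseteq C_0(A)$. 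The only genuine subtlety is norm completeness: if $x_n \in C_0(A)$ converges uniformly to $x$, one first chooses $n$ so large that $\|x_n - x\|_\infty < \e/3$ and then approximates $x_n$ on a neighbourhood of $t_0$ by some $y \in A$ within $\e/3$, and a triangle inequality yields $A$-continuity of $x$. I expect this uniform-convergence verification to be the only place requiring care; every other step is a direct application of the two-step approximation above.
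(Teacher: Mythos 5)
Your proposal is correct, but note that the paper itself gives no proof of this lemma: it is recalled verbatim from Fell's paper \cite{Fell} as background for Section \ref{sec:algfiel}, with only the preceding remark that ``one can prove that $C_0(F)$ is a full algebra of operator fields --- indeed, a maximal one.'' So there is no argument in the paper to compare against; what you have written supplies exactly the omitted content, and it is essentially Fell's original argument. Your four implications are all sound: $(3)\Rightarrow(2)$ with $F=A$; the two-step approximation (approximate $x$ by $y\in A$, then $y$ by $z\in F$) for $(2)\Rightarrow(3)$; the observation that $x-y\in B$ has continuous norm, which converts pointwise density at $t_0$ into $A$-continuity, for $(3)\Rightarrow(1)$; and the verification that $C_0(A)$ is itself a full algebra containing $A$ for $(1)\Rightarrow(3)$. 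The only step you wave at --- closure of $C_0(A)$ under pointwise multiplication --- does require a small extra ingredient you do not mention explicitly: local boundedness of $\|x(t)\|$ and of the approximants near $t_0$, which follows from continuity of $t\mapsto\|b(t)\|$ for $b\in A$ together with the approximation itself ($\|x(t)\|\le\delta+\|b(t)\|$ near $t_0$); with that noted, the estimate $\|x(t)y(t)-b(t)c(t)\|\le\delta\|y(t)\|+\|b(t)\|\delta$ closes the argument. One could also worry, in the general involutive-Banach-algebra setting the paper allows, whether the involution is isometric (needed for closure under $*$); for the $C^*$-case actually used in the paper this is automatic.
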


Such a maximal full algebra $A$ of operator fields may sometimes be called a
{\em continuous direct sum\/} of the $\{A_t\}$. It is clearly
{\em separating,\/} in the sense that , if $s,t\in T$, $s\ne t$, $\a\in A_s$,
$\b\in A_t$, there is an $x\in A$ such that $x(s)=\a$, $x(t)=\b$.
We will denote by $\wh a$ the section corresponding to an element
$a\in A$. We will study the unital case and $T$ will be compact without the
property of vanishing at infinity. The corresponding algebra will be denoted by
$\G (\A)\cong A$. Moreover, we suppose that $T$ is Hausdorff, hence,
normal.

\section{Functionals and measures}\label{sec:funstmeas}

\begin{dfn}\label{dfn:aofm}
A ({\em bounded additive\/}) {\em algebra of operator field measure\/}
(BA AOFM)
related to a maximal full algebra
of operator fields $A=\Gamma(\A)$ is a set function
$\mu:S\to \Gamma(\A)^*=A^*$, where $S\in \Sigma$, some algebra of sets,
\begin{itemize}
\item being additive:
$\mu(\sqcup S_i)(a)=\sum_i \mu(S_i)(a)$
\item $\mu(S)(a)=0$ if $\supp \wh a \cap S=\emptyset$.
\item bounded: $\sup$ over partitions $\{S_i\}$ of $T$ of
$\sum_i \|\mu(S_i)\|$ is finite and denoted by $\|\mu\|$
\end{itemize}
\end{dfn}

\begin{dfn}\label{dfn:regul}
It is *-{\em weak regular\/} (RBA AOFM)
if for each $E\in\Sigma$,
$a\in A$,
and $\e>0$ there is a
set $F\in\Sigma$ whose closure is contained in $E$ and a set $G\in\Sigma$
whose interior contains $E$ such that $\|\mu(C)a\|<\e$ for every
$C\in \Sigma$ with $C\ss G\setminus F$.
\end{dfn}

We will use as $\Sigma$ all sets and the algebra generated
by closed sets in $T$.

\begin{dfn}\label{dfn:lambmn}
Let AOFM $\l$ be defined on an algebra $\Sigma$ of sets in $T$
and $\l(\emptyset)=0$. A set $E\in\Sigma$
is called $\l$-{\em set\/}
if for any $M\in\Sigma$
$$
\l(M)=\l(M\cap E)+\l(M\cap(T\setminus E)).
$$
\end{dfn}

\begin{lem}\label{lem:mumn}
Let $\l$ be an {\rm AOFM} defined on an algebra $\Sigma$ of sets in
$T$ with $\l(\emptyset)=0$. The family of $\l$-sets is a subalgebra
of $\Sigma$ on which $\l$ is additive.
Furthemore, if $E$ is the union of a finite sequence $\{E_n\}$ of
disjoint $\l$-sets and $M\in \Sigma$, then
$\l(M\cap E)=\sum_n \l(M\cap E_n).$
\end{lem}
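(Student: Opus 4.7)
The plan is to run the classical Carath\'eodory measurability argument, adapted to the vector-valued setting. The defining identity in Definition \ref{dfn:lambmn} is symmetric under $E\leftrightarrow T\setminus E$, so the family of $\l$-sets is immediately closed under complementation, and $\emptyset$ (hence $T$) is trivially a $\l$-set because $\l(\emptyset)=0$. No issues here.

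The main substantive step is closure under intersection. Given $\l$-sets $E,F$ and any $M\in\Sigma$, I first apply the defining property of $E$ to $M$, and then apply the defining property of $F$ separately to the two pieces $M\cap E$ and $M\cap(T\setminus E)$. This writes $\l(M)$ as a sum of four terms supported on the atoms $M\cap E\cap F$, $M\cap E\cap(T\setminus F)$, $M\cap(T\setminus E)\cap F$, $M\cap(T\setminus E)\cap(T\setminus F)$. The first is $\l(M\cap(E\cap F))$; the remaining three sets form a disjoint decomposition of $M\cap(T\setminus(E\cap F))$ in $\Sigma$, so by the additivity of $\l$ their contributions sum to $\l(M\cap(T\setminus(E\cap F)))$. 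This is exactly the $\l$-set identity for $E\cap F$. Combined with complementation, it gives closure under union, so the $\l$-sets form a subalgebra of $\Sigma$.

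Additivity of $\l$ on disjoint $\l$-sets $E_1,E_2$ is then obtained by applying the $\l$-set property of $E_1$ with $M=E_1\cup E_2$: since $(E_1\cup E_2)\cap E_1=E_1$ and $(E_1\cup E_2)\cap(T\setminus E_1)=E_2$, one gets $\l(E_1\cup E_2)=\l(E_1)+\l(E_2)$. The final formula $\l(M\cap E)=\sum_n\l(M\cap E_n)$ for $E=E_1\sqcup\cdots\sqcup E_n$ follows by induction: the partial unions $F_k=E_1\cup\cdots\cup E_k$ all lie in the subalgebra just constructed, and applying the $\l$-set property of $F_{k-1}$ to $M\cap F_k$ splits off $M\cap E_k$ from $M\cap F_{k-1}$.

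I do not anticipate any real obstacle. The vector-valued nature of $\l$ is immaterial because every identity holds pointwise in $A^*$ and only finite additivity (on disjoint unions already in $\Sigma$) is used; all sets obtained by intersecting elements of $\Sigma$ with $E$, $T\setminus E$, $F$, $T\setminus F$ remain in $\Sigma$ because $\Sigma$ is an algebra. The only point requiring mild care is keeping track of which four atoms assemble into $M\cap(E\cap F)$ versus its complement within $M$, but this is pure set-theoretic bookkeeping.
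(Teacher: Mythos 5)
Your closure-under-intersection step contains a genuine gap: you justify recombining the three atoms $M\cap E\cap(T\setminus F)$, $M\cap(T\setminus E)\cap F$, $M\cap(T\setminus E)\cap(T\setminus F)$ into $\l\bigl(M\cap(T\setminus(E\cap F))\bigr)$ ``by the additivity of $\l$''. But $\l$ is \emph{not} assumed to be additive on $\Sigma$: an AOFM here is just a set function with $\l(\emptyset)=0$; additivity appears in Definition~\ref{dfn:aofm} only as part of the stronger notion of BA~AOFM, and it is exactly the property that Lemma~\ref{lem:mumn} is designed to produce (on the $\l$-sets). Indeed, if $\l$ were additive on all of $\Sigma$, then every $E\in\Sigma$ would trivially be a $\l$-set and the lemma would have no content; and in the one place the lemma is actually used --- the proof of Theorem~\ref{teo:razryv} --- it is applied to the set function $\mu_2$ built by $\inf/\sup$ regularization, which is an outer-measure-type object not known to be additive on disjoint unions. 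So your closing remark that ``only finite additivity (on disjoint unions already in $\Sigma$) is used'' misreads the hypothesis, and this is the crux of the Carath\'eodory argument, not bookkeeping.

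The gap is repairable with tools you already have: instead of invoking additivity, apply the defining property of the $\l$-sets twice more. First apply the $\l$-set property of $E$ to the test set $M\cap(T\setminus F)$, which gives $\l(M\cap E\cap(T\setminus F))+\l(M\cap(T\setminus E)\cap(T\setminus F))=\l(M\cap(T\setminus F))$; then apply the $\l$-set property of $F$ to the test set $M\cap(T\setminus(E\cap F))$, noting that $(T\setminus(E\cap F))\cap F=(T\setminus E)\cap F$ and $(T\setminus(E\cap F))\cap(T\setminus F)=T\setminus F$, which gives $\l\bigl(M\cap(T\setminus(E\cap F))\bigr)=\l(M\cap(T\setminus E)\cap F)+\l(M\cap(T\setminus F))$. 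The latter identity is precisely the paper's equation~(\ref{eq:iii}); the paper is in fact slightly more economical, since it never splits $M\cap(T\setminus Y)$ into two atoms and therefore needs only this second step. The rest of your argument --- complementation, unions via De~Morgan, additivity on disjoint $\l$-sets by testing with $M\cap(E_1\cup E_2)$, and the induction for the final formula --- coincides with the paper's and is fine.
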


\begin{proof}
It is clear, that the void set, the whole space, and the
complement of any $\l$-set are $\l$-sets. Now let $X$ and $Y$ be
$\l$-sets, and $M\in\Sigma$. Then, since $X$ is a $\l$-set,
\begin{equation}\label{eq:i}
\l(M\cap Y)=\l(M\cap Y \cap X)+\l(M\cap Y\cap (T\setminus X)),
\end{equation}
and since $Y$ is a $\l$-set,
\begin{equation}\label{eq:ii}
\l(M)=\l(M\cap Y)+\l(M\cap (T\setminus Y)),
\end{equation}
$$
\l(M\cap (T\setminus (X\cap Y)))=
\l(M\cap (T\setminus (X\cap Y))\cap Y)+
\l(M\cap (T\setminus (X\cap Y))\cap (T\setminus Y)),
$$
hence,
\begin{equation}\label{eq:iii}
\l(M\cap (T\setminus (X\cap Y)))=
\l(M\cap (T\setminus X)\cap Y)+
\l(M\cap (T\setminus Y)).
\end{equation}
From (\ref{eq:i}) and (\ref{eq:ii}) it follows that
$$
\l(M)=\l(M\cap Y \cap X)+\l(M\cap Y\cap (T\setminus X))
+\l(M\cap (T\setminus Y)),
$$
and from (\ref{eq:iii}) that
$$
\l(M)=\l(M\cap Y \cap X)+\l(M\cap (T\setminus (X\cap Y))).
$$
Thus $X\cap Y$ is a $\l$-set. Since
$\cup X_n=T\setminus \cap (T\setminus X_n)$, we conclude that
the $\l$-sets form an algebra. Now if $E_1$, and $E_2$ are disjoint
$\l$-sets, it follows, by replacing $M$ by $M\cap(E_1\cup E_2)$
in Definition \ref{dfn:lambmn}, that
$$
\l(M\cap(E_1\cup E_2))=\l(M\cap E_1)+\l(M\cap E_2).
$$
The final conclusion of the lemma follows from this by induction.
\end{proof}

As it is well known, any functional $\t$ on a $C^*$-algebra $B$
can be represented as a linear combination of 4 positive functionals
in the following canonical way. First let us represent
$\t$ under the form $\t=\t_1+ i\t_2$, where self-adjoint functionals
$\t_1$ and $\t_2$ are defined by the formulas
\begin{equation}\label{eq:razlnasamosop}
\t_1(a)=\frac{\t(a)+\ov{\t(a^*)}}2,\qquad \t_2(a)=\frac{\t(a)-\ov{\t(a^*)}}{2i}.
\end{equation}
By the lemma about Jordan decomposition, any self-adjoint functional
$\a$ can be represented in a unique way as a difference of two positive
functionals $\a=\a_+-\a_-$ under requirement
\begin{equation}\label{eq:razlozhjord}
    \|\a\|=\|\a_+\|+\|\a_-\|
\end{equation}
(see \cite[\S 3.3]{Murphy},
\cite[Theorem 3.2.5]{Ped}). Let us decompose an AOFM in the related way.
Since the decomposition is unique, the second property of AOFM will held.
If we start from BA AOFM, then the additivity of summands will follow from
the uniqueness of the decomposition, and the boundedness (with double constant)
will follow from (\ref{eq:razlnasamosop}) and property
(\ref{eq:razlozhjord}).
The same argument shows that the summands are *-weak regular, if the initial
AOFM was *-weak regular.
So, the AOFM's in the decomposition are
{\em positive\/} AOFM, i.e., such that
$$
\mu(E)[a^*a]\ge 0
$$
for any $E\in \Sigma$.
Such a set function is non-decreasing.

\begin{lem}\label{lem:weakreg1}
The sets $F$ and $G$ in the {\rm Definition~\ref{dfn:regul}}
can be chosen in such a way that
$\|\mu(C)(fa)\|<\e$ for any continuous
function $f:T\to [0,1]$.
\end{lem}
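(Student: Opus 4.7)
The plan is to reduce first to the case of a positive $\mu$, by invoking the Jordan-type decomposition described just before the lemma: write $\mu = (\mu_1^+-\mu_1^-) + i(\mu_2^+-\mu_2^-)$, with each summand a positive *-weak regular {\rm BA AOFM}. It is then enough to prove the statement for one positive $\mu$ with $\e$ replaced by $\e/4$, and afterwards to take the union of the four closed sets $F$ and the intersection of the four sets $G$ so obtained; the four pieces recombine by the triangle inequality to give the bound for $\mu$ itself.

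Assume then that $\mu$ is positive. The key idea is not to apply Definition~\ref{dfn:regul} to $a$ directly, but to the positive element $a^*a\in A$ with a much smaller tolerance. Concretely, I would set $\e_1:=\e^2/(1+\|\mu\|)$ and use regularity to obtain a closed $F\ss E$ and a set $G\supset E$ with open interior such that $\mu(C)(a^*a)<\e_1$ whenever $C\in\Sigma$ satisfies $C\ss G\setminus F$. Then for any continuous $f:T\to[0,1]$ the Cauchy-Schwarz inequality for the positive functional $\mu(C)$ on the unital $C^*$-algebra $A$ (with $1\in A$ the identity field) yields
$$
|\mu(C)(fa)|^2\le \mu(C)(1)\cdot\mu(C)((fa)^*(fa))=\mu(C)(1)\cdot\mu(C)(f^2 a^*a).
$$
Since $0\le f(t)^2\le 1$ pointwise on $T$, we have $f^2 a^*a\le a^*a$ inside $A$; positivity of $\mu(C)$ then gives $\mu(C)(f^2 a^*a)\le \mu(C)(a^*a)<\e_1$. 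Combined with $\mu(C)(1)\le \|\mu(C)\|\le \|\mu\|$, this produces $|\mu(C)(fa)|^2<\|\mu\|\cdot \e_1\le \e^2$, uniformly in $f$ and in $C\ss G\setminus F$.

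The main technical point is that a single pair $(F,G)$, depending only on $E$, $a$ and $\e$, must work for all $f$ simultaneously; this is exactly what the bound $(fa)^*(fa)=f^2 a^*a\le a^*a$ delivers via Cauchy-Schwarz, since it trades the variable quantity $fa$ against the fixed quantity $a^*a$ to which ordinary regularity applies. The Jordan reduction is routine once one checks that each positive summand inherits *-weak regularity from $\mu$ (this uses the uniqueness in the decomposition and the bound \eqref{eq:razlozhjord}), and no further hypothesis on $T$ beyond those already present in Definition~\ref{dfn:regul} will be invoked.
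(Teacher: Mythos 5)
Your proof is correct, and it overlaps with the paper's in its first step: both reduce to positive measures by the Jordan-type decomposition of $\mu$ into four positive *-weak regular summands (the paper establishes this decomposition just before the lemma, using (\ref{eq:razlnasamosop}) and (\ref{eq:razlozhjord})), and both recombine the four pairs $(F_i,G_i)$ by taking the union of the $F_i$'s, the intersection of the $G_i$'s, and the triangle inequality. Where you genuinely diverge is the mechanism that makes the bound uniform in $f$. The paper additionally decomposes the \emph{element} $a=\sum_{j=1}^4 y_j a_j$ into positive elements and applies Definition~\ref{dfn:regul} with tolerance $\e/16$ to all sixteen pairs $(\mu_i,a_j)$; uniformity in $f$ then comes from the elementary order inequality $0\le f a_j=a_j^{1/2}fa_j^{1/2}\le a_j$, whence $0\le\mu_i(C)(fa_j)\le\mu_i(C)(a_j)<\e/16$. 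You instead keep $a$ whole, apply regularity once per positive summand to the single positive element $a^*a$ with the quadratically small tolerance $\e^2/(1+\|\mu\|)$, and obtain uniformity from Cauchy--Schwarz for the positive functional $\mu(C)$ together with $(fa)^*(fa)=f^2a^*a\le a^*a$. Both arguments are sound, and in both the resulting pair $(F,G)$ depends only on $E$, $a$, $\e$ (and $\mu$), never on $f$, which is the whole point. The trade-off: the paper's route is purely order-theoretic, keeps the tolerance linear in $\e$, and never invokes the unit; yours avoids decomposing $a$ (one application of regularity per summand instead of four) at the price of using $\1\in A$ in Cauchy--Schwarz --- harmless here, since the paper explicitly restricts to the unital, compact-$T$ case (and the inequality $|\phi(x)|^2\le\|\phi\|\,\phi(x^*x)$ would serve without a unit) --- and of constants depending on $\|\mu\|$, which is finite by the boundedness axiom and so also harmless.
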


\begin{proof}
Let us take the decompositions $\mu=\sum_{i=1}^4 x_i \mu_i$,
$a=\sum_{j=1}^4 y_j a_j$, where $\mu_i$ and $a_j$ are positive,
$x_i$, $y_j$ are complex numbers of norm $\le 1$. Let us choose
the sets $F$ and $G$, as in Definition~\ref{dfn:regul}, for
$\e/16$ and for all pairs $\mu_i$, $a_j$ simultaneously. Then
$$
0\le
\mu_i(C)(f\cdot a_j)=\mu_i(C)((a_j)^{1/2} f(a_j)^{1/2})\le
\mu_i(C)(a_j)\le \frac \e{16},
$$
and
$$
\|\mu_i(C)(f\cdot a)\|\le \sum_{i,j=1}^4 |x_i y_j|\cdot
|\mu_i(C)(a_j)|\le 16\cdot\frac \e{16}=\e.
$$
\end{proof}

\begin{teo}\label{teo:razryv}
Let a unital separable $C^*$-algebra $A$ be isomorphic to
a full algebra of operator fields $\G(\A)$ over a Hausdorff space $X$.
Then functionals on $A\cong \G(\A)$ can be identified with {\rm RBA AOFM\/}
of $\A$.
\end{teo}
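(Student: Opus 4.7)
The plan is to adapt the classical Riesz--Markov--Kakutani construction to this non-commutative setting. The key extra ingredient is the action of $C(X)$ on $A\cong\G(\A)$ by central multipliers: each $f\in C(X)$ acts on a section $\wh a$ by pointwise multiplication of the scalar $f(t)$, and this multiplication preserves $A$ and commutes with everything. This is the non-commutative substitute for multiplying a function on $X$ by a characteristic function.

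First I would reduce to positive functionals. Using the splitting (\ref{eq:razlnasamosop}) and the Jordan decomposition (\ref{eq:razlozhjord}), any $\f\in A^*$ becomes a complex linear combination of four positive functionals. Since the set of {\rm RBA AOFM}s is closed under addition and scalar multiplication (one adds values pointwise in $A^*$; regularity, boundedness, and the support property are preserved), it is enough to establish the bijection for positive $\f$.

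Next, given a positive $\f$, I would build an outer measure taking values in $A^*$. For open $U\ss X$ and positive $a\in A$, define
$$
\mu^*(U)(a)\ =\ \sup\bigl\{\f(f\cdot a):\ f\in C(X),\ 0\le f\le 1,\ \supp f\ss U\bigr\},
$$
extending in $a$ via the canonical decomposition of a general $a$ into four positive elements, and extending to arbitrary $E\ss X$ by $\mu^*(E)(a)=\inf\{\mu^*(U)(a):U\supset E\text{ open}\}$. Restricting to the collection $\Sigma_\mu$ of $\mu^*$-sets in the sense of Definition~\ref{dfn:lambmn} and applying Lemma~\ref{lem:mumn} yields an algebra on which $\mu:=\mu^*|_{\Sigma_\mu}$ is additive; that all closed sets lie in $\Sigma_\mu$ follows from normality of $X$ together with Lemma~\ref{lem:weakreg1}. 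The three defining properties of a {\rm RBA AOFM} are then verified: boundedness $\|\mu\|\le\|\f\|$ by testing on continuous partitions of unity; the support condition $\mu(S)(a)=0$ when $\supp\wh a\cap S=\emptyset$ immediately from the sup, since the cut-off $f$ can be chosen to vanish on $\supp\wh a$; and *-weak regularity directly from the outer-measure definition combined with Lemma~\ref{lem:weakreg1}.

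For the reverse direction I would send an {\rm RBA AOFM} $\mu$ to the functional $\Phi_\mu(a):=\mu(X)(a)$; linearity and the bound $\|\Phi_\mu\|\le\|\mu\|$ are immediate, and mutual inverseness of the two constructions is checked by a standard approximation argument using continuous partitions of unity subordinate to refinements of a given partition of $X$. The main obstacle, which is genuinely non-commutative, is Step~2: one must show that the supremum really defines a bounded \emph{linear} functional $\mu^*(U)\in A^*$, not just a sublinear gauge on the positive cone. This is where centrality of the $f$'s is essential, since it lets $f$ commute past $a^*a$-type expressions and lets one pass to the supremum along an upward-directed net of $f$'s approximating $\chi_U$; separability of $A$ then provides a countable cofinal subnet so that the sup is actually a limit and inherits linearity in $a$ from the linearity of each $\f(f\cdot-)$.
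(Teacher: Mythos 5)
Your construction is correct in outline, but it takes a genuinely different route from the paper's. The paper never confronts the linearity problem you single out as the main obstacle: instead of defining the measure by a supremum over cutoff functions, it extends the given functional $\f$ by Hahn--Banach to a functional $\psi$ on the larger $C^*$-algebra $B(\A)=\prod_{t\in T}A_t$ of \emph{all} bounded, possibly discontinuous sections, where $\chi_E a$ is literally an element of the algebra; the set function $\l(E)(a):=\psi(\chi_E a)$ is then linear in $a$ for free. The price is that $\l$ need not be regular, so the bulk of the paper's proof is a regularization step: $\mu_1(F)=\inf_{G\supset F}\l(G)$ on closed sets, $\mu_2(E)=\sup_{F\ss E}\mu_1(F)$, with separability of $A$ and weak-$*$ compactness of the dual ball used to realize these infima and suprema as honest functionals, followed by the Carath\'eodory-type machinery (Definition~\ref{dfn:lambmn}, Lemma~\ref{lem:mumn}, normality of $T$) and the inequalities (\ref{eq:idanf})--(\ref{eq:iiidanf}) to get additivity on the algebra generated by closed sets. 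Your route is the classical Riesz--Markov--Kakutani construction internalized through the central action of $C(X)$: it needs no auxiliary algebra and no non-canonical Hahn--Banach extension, and outer regularity is built into the definition, but linearity of $\mu^*(U)$ must then be proved by hand. Your justification of that step is the right idea stated imprecisely: what actually makes it work is that the cutoffs subordinate to $U$ form an upward-directed family under pointwise maximum, and for positive $\f$ and positive $a$ the net $f\mapsto\f(fa)$ is monotone, so the supremum is a limit of linear functionals along one common directed set, giving additivity on the positive cone (no countable cofinal subnet is needed, although metrizability of $T$, which the paper also derives from separability of $A$, does provide one). Two places in your sketch conceal real work: the appeal to Lemma~\ref{lem:weakreg1} when showing closed sets are $\mu^*$-sets is misplaced, since that lemma concerns an already-constructed RBA AOFM --- what is used there is normality plus the sup/inf definitions, exactly as in the paper's argument; and ``mutual inverseness by a standard approximation argument'' hides the uniqueness half of the identification, which the paper proves as the statement that $\mu\mapsto\mu(T)$ is an isometry, via disjointly supported cutoffs $f_i$ and the test element $a=\sum_i f_ia_i$ of norm at most $1$. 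Neither of these is a fatal gap, but they are precisely where the paper's proof spends its effort.
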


\begin{proof}
Let us remark that these suppositions imply the following: $T$ is a
separable Hausdorff compact and the unit ball of the dual space of $A$
is a metrizable compact in $*$-weak topology.

Obviously RBA AOFM form a linear normed space with respect to
$\|.\|$.

First, we want to prove that the natural linear map $\mu\mapsto \mu(T)$ is
an isometry of RBA AOFM into $A^*$. Since $\|\mu(T)\|\le \|\mu\|$, it is
of norm $\le 1$. Let now take an arbitrary small $\e>0$. Let $E_1,\dots,
E_n$ be a partition of $T$ such that
$$
\sum_{i=1}^n \|\mu(E_i)\|\ge \|\mu\|-\e.
$$
Let $a_i\in A$ be such elements of norm 1, that
$\mu(E_i)(a_i)\ge \|\mu(E_i)\|-\e/n$.

By *-weak regularity of $\mu$ and normality of $T$ one can take closed
sets $C_i$, disjoint open sets $G_i$, and continuous functions
$f_i:T\to [0,1]$ such that $C_i \ss E_i$,
$\|\mu(E_i\setminus C_i)(a_j)\|\le \e/n^2$,
$C_i\ss G_i$, $\|\mu(G_i\setminus C_i)(a_j)\|\le \e/n^2$,
(and estimations hold for multiplication by positive functions
as well, as in Lemma~\ref{lem:weakreg1})
$f_i(s)=0$ if $s\not\in G_i$, $f_i(s)=1$ if $s\in C_i$, $i,j=1,\dots,n$.

Consider the element
$a:=\sum_i f_i a_i \in \G(\A)=A$. Then $\|a\|\le 1$ and
$$
|\:\mu(S)(a)-\|\mu\|\:|\le \sum_{i=1}^n\left| \mu(E_i)(a)-\mu(E_i) \right|+\e
$$
$$
\le \sum_{i=1}^n\left| \mu(E_i\setminus C_i)(a) + \mu(C_i)(a)
-\mu(E_i)(a_i) \right|+2\e
$$
$$
= \sum_{i=1}^n\left|  \sum_{j=1}^n\mu(E_i\setminus C_i)(f_j a_j)
+ \mu(C_i)(a_i)-\mu(E_i)(a_i) \right|+2\e
$$
$$
\le \sum_{i,j=1}^n | \mu(E_i\setminus C_i)(f_j a_j)|
+ \sum_{i=1}^n \left| \mu(E_i\setminus C_i)(a_i) \right|+2\e
\le n^2 \:\frac \e{n^2} + n \:\frac \e{n^2} + 2\e
\le 4\e.
$$
Since $\e$ is arbitrary small, $\|\mu\|=\|\mu(S)\|$.

It remains to represent the general functional $\f$ by RBA AOFM.
This functional on $\G(\A)$ can be extended by Hahn-Banach
theorem to a continuous functional $\psi$ on $B(\A)=\prod_{t\in T}A_t$
(the $C^*$-algebra
of possibly discontinuous cross-sections of $\A$ with $\sup$-norm).
This functional can be decomposed
$\psi=\sum_{i=1}^4 \a_i\psi_i$, where $\psi_i$ are
positive functionals, $\a_i\in\C$, $|\a_i|=1$, $\|\psi_i\|\le \|\psi\|$.
Let us define
$$
\l(E)(a):=\psi(\chi_E a),\qquad \l_i(a):=\psi_i(\chi_E a),\quad
i=1,\dots,4.
$$
where $a\in \G(\A)$, and $\chi_E$ is the characteristic function
of $E$. Evidently, $\l(T)(a)=\psi(a)$ and $\l$ is a BA AOFM.
Indeed, the first two properties of Definition \ref{dfn:aofm} are
evident. The third one can be verified for each $\l_i$,
$i=1,\dots,4$:
$$
\sum_{j=1}^N |\l_i(E_j)|=\sum_{j=1}^N \l_i(E_j)(\1)=
\sum_{j=1}^N \psi_i(\chi_{E_j}\1)=\psi_i(\1)\le \|\psi_i\|,
$$
hence,
$$
\sum_{j=1}^N |\l(E_j)| \le \sum_{i=1}^4 \sum_{j=1}^N
|\l_i(E_j)|\le \sum_{i=1}^4 \|\psi_i\| \le 4\cdot \|\psi\|.
$$

Now we want to find an RBA AOFM $\mu$ such that
$\mu(T)(a)=\l(T)(a)$. Without loss of generality it is sufficient
to do this for a positive $\l=\l_i$.

Let $F$ represent the general closed subset, $G$ the general
open subset, $E$ the general subset of $S$. Define $\mu_1$ and
$\mu_2$ by putting
$$
\mu_1(F)(a^*a)=\inf_{G\supset F} \l(G) (a^*a),\qquad
\mu_2(E) (a^*a)=\sup_{F\ss E} \mu_1(F) (a^*a),
$$
and then by taking linear extension.
More precisely, due to separability one can take a cofinal
sequence $\{G_i\}$. The unit ball in the dual space is weakly compact
and one can take a weakly convergent sequence $\l(G_{i_k})$.
Its limit $\psi$ is a positive functional on $A$ enjoying $\inf$-property
on positive elements. In particular, it is independent of the choice
of $\{G_i\}$ and $\{G_{i_k}\}$. In a similar way for $\sup$.

These set functions are non-negative and non-decreasing.
Let $G_1$ be open and $F_1$ be closed. Then if
$G\supset F_1\setminus G_1$, it follows that $G_1\cap G\supset F_1$
and $\l(G_1)\le \l(G_1)+\l(G)$ so that
$\mu_1(F_1)\le \l(G_1)+\l(G)$. Since $G$ is an arbitrary open
set containing $F_1\setminus G_1$ we have
$$
\mu_1(F_1)\le \l(G_1) +\mu_1(F_1\setminus G_1).
$$
If $F$ is a closed set it follows from this inequality, by allowing
$G_1$ to range over all open sets containing $F\cap F_1$, that
$$
\mu_1(F_1)\le \mu_1 (F\cap F_1) + \mu_2 (F_1\setminus F).
$$
If $E$ is an arbitrary subset of $T$ and $F_1$ ranges over the closed
subsets of $E$, then it follows from the preceding inequality that
\begin{equation}\label{eq:idanf}
\mu_2(E) \le \mu_2(E\cap F) + \mu_2(E\setminus F).
\end{equation}
It will next be shown that for an arbitrary set $E$ in $T$
and arbitrary closed set $F$ in $T$ we have
\begin{equation}\label{eq:iidanf}
\mu_2(E) \ge \mu_2(E\cap F) + \mu_2(E\setminus F).
\end{equation}
To see this let $F_1$ and $F_2$ be disjoint closed sets. Since $T$ is
normal there are disjoint neighborhoods $G_1$ and $G_2$ of $F_1$ and $F_2$
respectively. If $G$ is an arbitrary neighborhood of $F_1\cup F_2$
then $\l(G)\ge \l(G\cap G_1) + \l(G\cap G_2)$ so that
$$
\mu_1(F_1\cap F_2)\ge \mu_1 (F_1) + \mu_2(F_2).
$$
We now let $E$ and $F$ be arbitrary sets in $T$ with $F$ closed and
let $F_1$  range over closed subsets of $E\cap F$ while $F_2$ ranges over
the closed subsets of $E\setminus F$. The preceding inequality then
proves (\ref{eq:iidanf}). From (\ref{eq:idanf}) and (\ref{eq:iidanf})
we have
\begin{equation}\label{eq:iiidanf}
\mu_2(E)=\mu_2(E\cap F) + \mu_2(E\cap (T\setminus F))
\end{equation}
for any $E$ in $T$ and $F$ closed.
The function $\mu_2$ is defined on the algebra of all subsets
of $T$ and it follows from (\ref{eq:iiidanf}) that every closed
set $F$ is a $\mu_2$-set.
If $\mu$ is the restriction of $\mu_2$ on the algebra determined
by the closed sets, it follows from Lemma \ref{lem:mumn}
that $\mu$ is additive
on this algebra. It is clear from the definition of $\mu_1$ and $\mu_2$
that $\mu_1(F)=\mu_2(F)=\mu(F)$ if $F$ is closed and thus
$\mu(E)=\sup_{F\ss E}\mu(F)$. This shows that $\mu$ is *-weak regular
and since $\|\mu(T)\|<\infty$, we have $\mu$ is RBA AOFM.

Finally, by the definition, $\mu(S)(a)=\l(S)(a)=\psi(a)=\f(a)$
for $a\in \G(\A)$.
\end{proof}

\section{Twisted-invariant AOFM}\label{sec:invfunsme}

The most part of the argument is valid for various representations
of algebras by operator fields, but now we will pass to the case
of group $C^*$-algebra of a discrete group and concentrate
ourselves on the following important case due to Dauns and
Hofmann \cite[Corollary 8.13]{DaunsHofmann}
(see also \cite[Corollary 8.14]{DaunsHofmann}):

\begin{teo}
Consider a $C^*$-algebra $A$ $($with or without an identity$)$
and the set $B$ of all its primitive ideals in the hull-kernel
topology $q$.

(1) The complete regularization $\f:(B,q)\to (M,t)$ can be taken
to consist of closed ideals $m$ of $A$ satisfying $m=\cap
\f^{-1}(m)$. The topology $t$ of $M$ contains the weak-star
topology $t^*$ induced on $M$ by $A$.

Denote by $\K$ the family of all $t$-compact subsets of $M$ of the
form $\{m\in M\: |\: \|a+m\|\ge \e\}$, $a\in A$, $0<\e$. Let
$\pi'':E'\to (M,t)$ be the uniform field of $C^*$-algebras
obtained by first forming the canonical field from $A$, $M$ and
then enlarging the weak-star topology $t^*$ of $M$ up to $t$. For
each $a\in A$, $\hat a$ is the map $\hat a:M\to E'$,
$\hat a(m)=a+m\in A/m$. The $C^*$-algebra of all sections
vanishing at infinity with respect to the class $\K$ is denoted by
$\G_0(\pi'')$.

(2) Then the map $A\to \G_0(\pi'')$, $a\mapsto \hat a$ is an
isometric star-isomorphism. In particular, if $M$ is $t$-compact,
then $M\in \K$ and $A\cong \G(\pi'')$.
\end{teo}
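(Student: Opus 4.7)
The plan is to deduce the statement from the classical Dauns--Hofmann identification $C_b(B,q)\cong Z(M(A))$, where $M(A)$ is the multiplier algebra of $A$ and $Z(M(A))$ its center. First I would set up the complete regularization $\f:(B,q)\to(M,t)$ abstractly as the quotient by the equivalence $b_1\sim b_2$ iff $f(b_1)=f(b_2)$ for every $f\in C_b(B,q)$, equipped with the initial topology from $C_b(B,q)$. Transporting this equivalence through the center isomorphism, $b_1\sim b_2$ becomes $b_1\cap Z(M(A))=b_2\cap Z(M(A))$, so the equivalence classes are in natural bijection with the maximal ideals of $Z(M(A))$. This identifies each point of $M$ with the Glimm ideal $m=\bigcap\f^{-1}([b])$, which is automatically closed (as an intersection of primitive ideals) and manifestly satisfies $m=\bigcap\f^{-1}(m)$, proving part (1)'s first assertion.

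Next I would show $t\supseteq t^*$ by proving every $a\in A$ induces a $t$-continuous function $m\mapsto\|a+m\|$. The key identity is that for $z\in Z(M(A))$ and its Gelfand transform $\widehat z\in C_b(M)$, one has $za+m=\widehat z(m)\,(a+m)$ in the fiber $A/m$; combined with a $C^*$-estimate and an approximate identity in $Z(M(A))$, this forces upper semicontinuity, while lower semicontinuity of $m\mapsto\|a+m\|$ is inherited from the canonical field over $(B,q)$. The compactness of $\{m:\|a+m\|\ge\e\}$ and its membership in $\K$ then follows from joint continuity of the norm and the observation that $a$ is detected as ``vanishing at infinity'' via the center action.

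With continuity in hand, I would assemble the uniform field $\pi'':E'\to(M,t)$ by declaring the fiber at $m$ to be $A/m$ and the continuous sections to be those locally uniformly approximable by the $\widehat a$; the map $a\mapsto \widehat a$ is then a well-defined $*$-homomorphism into $\G_0(\pi'')$. Isometry follows from $\|a\|=\sup_m\|a+m\|$, which in turn follows from $\bigcap_{m\in M}m=\bigcap_{b\in B}b=\{0\}$ (semisimplicity of $A$) and the $C^*$-identity. For surjectivity onto $\G_0(\pi'')$, I would use a partition-of-unity argument: given a section $s$ and $\e>0$, cover $M$ by sets on which $s$ is within $\e$ of some $\widehat{a_i}$, pass to a finite $\K$-subcover, and glue the $a_i$ using elements $z_i\in Z(M(A))$ whose Gelfand transforms form a subordinate partition of unity on $M$. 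The last claim, that $A\cong\G(\pi'')$ when $M$ is $t$-compact, then follows immediately because the ``vanishing at infinity'' condition becomes vacuous.

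The main obstacle is the promotion from the $T_0$ topology $q$ on $B$ to the Hausdorff topology $t$ on $M$: showing that the norm functions remain continuous after enlarging the topology is precisely where the Dauns--Hofmann center identification is indispensable, since on $(B,q)$ alone one only has lower semicontinuity of $b\mapsto\|a+b\|$. Once that continuity is established, the rest reduces to standard $C^*$-bundle manipulations and a careful verification that the various identifications between primitive ideals, Glimm ideals, and points of $M$ are compatible.
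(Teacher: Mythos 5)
The paper contains no proof of this statement: it is a verbatim quotation of Corollary 8.13/8.14 of \cite{DaunsHofmann}, so your attempt can only be measured against the memoir's own argument, which is built on \emph{uniform} fields (in modern language, upper semicontinuous $C^*$-bundles), not on Fell-type continuous fields. That distinction is exactly where your proposal fails. Your plan hinges on the claim that every norm function $m\mapsto\|a+m\|$ is $t$-continuous on $M$, with upper semicontinuity coming from the center action and lower semicontinuity ``inherited from the canonical field over $(B,q)$''. The u.s.c.\ half is essentially correct, but the l.s.c.\ half is false in general: since $\|a+m\|=\sup_{P\in\f^{-1}(m)}\|a+P\|$, descending lower semicontinuity from $(B,q)$ to $(M,t)$ requires the complete regularization map $\f$ to be \emph{open}, which fails for general $A$ (this is essentially the quasi-standardness condition of Archbold--Somerset). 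Concretely, let $A\ss C_b(\N,M_2)$ be the $C^*$-algebra of sequences $(a_n)$ with $a_{2n}\to\diag(\l,\mu)$ and $a_{2n+1}\to\diag(\mu,\nu)$. Then $\Prim(A)=\N\sqcup\{\infty_\l,\infty_\mu,\infty_\nu\}$, the even integers converge to both $\infty_\l$ and $\infty_\mu$, the odd ones to both $\infty_\mu$ and $\infty_\nu$, so all three points at infinity lie in one Glimm class; hence $M=\N\cup\{\infty\}$ is the one-point compactification, the Glimm ideal is $m_\infty=c_0(\N,M_2)$, and $A/m_\infty\cong\C^3$ with $\|a+m_\infty\|=\max(|\l|,|\mu|,|\nu|)$. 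Taking $a_{2n}=0$, $a_{2n+1}=\diag(0,1)$ gives $\|a+m_{2n}\|=0$ for all $n$ while $m_{2n}\to m_\infty$ and $\|a+m_\infty\|=1$: the norm function is u.s.c.\ but not l.s.c. So the intermediate statement your proof targets --- that $A$ is a Fell-type continuous field over its Glimm space --- is simply false, and no argument can establish it.

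Because this continuity claim is, by your own description, the ``main obstacle'' your proof resolves, the later steps (assembling $E'$ as a continuous field, and the partition-of-unity surjectivity argument in that category) cannot be carried out as written; they must be recast in the uniform/u.s.c.\ bundle category, where the topology on $E'$ is rigged so that the sections $\hat a$ are continuous \emph{without} the fiberwise norms being continuous --- this is precisely what ``uniform field'' means in the statement and what makes the theorem true for arbitrary $A$. Your u.s.c.\ argument via the center and the partition-of-unity idea (gluing with Gelfand transforms of central multipliers) do survive this reformulation and are indeed the skeleton of modern treatments, as is the isometry argument from $\bigcap_m m=0$. Two further corrections: points of $M$ are \emph{not} in bijection with the maximal ideals of $Z(M(A))$ when $A$ is non-unital (for $A=C_0(\R)$ the Glimm space is $\R$ while the maximal ideal space of $C_b(\R)$ is $\beta\R$); one only gets an injection with dense image. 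And your starting point $C_b(B,q)\cong Z(M(A))$ is itself the other main theorem of the same memoir, deduced there \emph{from} the sectional representation; to avoid circularity you must invoke an independent proof of it (e.g.\ Elliott--Olesen, or \cite[4.4.8]{Ped}).
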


\begin{dfn}\label{dfn:glimmideal}
Ideals $m$ from the previous theorem, i.e., points of $M$, are
called \emph{Glimm ideals}.
\end{dfn}

We will use the notation $T$ instead of $M$ for Glimm spectrum
to agree with the previous section.

Now we consider a countable discrete group $G$ and its
automorphism $\phi$. Also, $A=C^*(G)$. One has the twisted action
of $G$ on $A$:
$$
g[a]=L_g a L_{\phi(g^{-1})},
$$
where $L_g$ is the left shift, and respectively on functionals.

\begin{dfn}\label{dfn:generReid}
The dimension $R_*(\phi)$ of the space of twisted invariant functionals
on $C^*(G)$ is called \emph{generalized Reidemeister number of $\phi$}.
\end{dfn}

\begin{dfn}
A (Glimm) ideal $I$ is a {\em generalized fixed point of $\wh \phi$\/}
if the linear span of
elements $b-g[b]$ is not dense in $A_I=A/I$, where
$g[.]$ is the twisted action, i.e. its closure $K_I$ does not coincide
with $A_I$.
\end{dfn}

If we have only a finite number of such fixed
points, then the twisted invariant RBA AOFM's are
concentrated in these points. Indeed, let us describe the action of $G$
on RBA AOFM's in
more detail.
The action of $G$ on measures is defined by the identification of
the measures and functionals on $A$.

\begin{lem}\label{lem:ograninv}
If $\mu$ corresponds to a twisted invariant functional, then
for any Borel $E\ss T$ the functional $\mu(E)$ is twisted
invariant.
\end{lem}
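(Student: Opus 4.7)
The plan is to show that, for each fixed $g\in G$, the set function
\[
\mu_g(E)(a):=\mu(E)(g[a])
\]
is again an RBA AOFM with $\mu_g(T)=\mu(T)$, and then invoke the injectivity of the map $\mu\mapsto\mu(T)$ from Theorem~\ref{teo:razryv} to conclude $\mu_g=\mu$, which is precisely the twisted invariance of each $\mu(E)$.

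First I would verify the axioms of Definitions~\ref{dfn:aofm}--\ref{dfn:regul} for $\mu_g$. Additivity is immediate from the linearity of $g[\cdot]$. Boundedness holds because $a\mapsto g[a]=L_g\,a\,L_{\phi(g^{-1})}$ is a linear isometry of $A=C^*(G)$, since $L_g$ and $L_{\phi(g^{-1})}$ are unitaries. Regularity of $\mu_g$ at $(E,a,\varepsilon)$ is obtained by applying the regularity of $\mu$ at $(E,g[a],\varepsilon)$; the same $F$ and $G$ work. The one axiom requiring an actual observation is the support clause, which needs $\supp\widehat{g[a]}=\supp\hat a$: for every Glimm ideal $I_t$, the quotient map $\rho_t:A\to A/I_t=A_t$ is a $*$-homomorphism, so $\rho_t(L_g)$ and $\rho_t(L_{\phi(g^{-1})})$ remain unitaries in the $C^*$-algebra $A_t$, and hence
\[
\|\widehat{g[a]}(t)\|=\|\rho_t(L_g)\,\hat a(t)\,\rho_t(L_{\phi(g^{-1})})\|=\|\hat a(t)\|.
\]
Thus $\supp\widehat{g[a]}=\supp\hat a$, and the support condition for $\mu_g$ follows from the one for $\mu$.

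With $\mu_g$ an RBA AOFM, twisted invariance of $\varphi:=\mu(T)$ yields $\mu_g(T)(a)=\varphi(g[a])=\varphi(a)=\mu(T)(a)$. Theorem~\ref{teo:razryv} states that $\mu\mapsto\mu(T)$ is an isometric, in particular injective, map of RBA AOFMs into $A^*$, so $\mu_g=\mu$ and therefore $\mu(E)(g[a])=\mu(E)(a)$ for every $g\in G$, Borel $E\subseteq T$ and $a\in A$. The main (and really only nonroutine) step is the fiberwise norm-preservation identity above, which rests on the fact that Glimm quotients inherit the $C^*$-structure and so carry unitaries of $A$ to unitaries; everything else is bookkeeping against the axioms together with the uniqueness statement of Theorem~\ref{teo:razryv}.
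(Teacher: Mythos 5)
Your proof is correct, but it takes a genuinely different route from the paper's. The paper argues by a direct estimate: given $E$, $a$, $g$ and $\e>0$, it chooses (via the strengthened regularity of Lemma~\ref{lem:weakreg1}) a closed $F$ and an open $U$ sandwiching $E$, a continuous bump $f$ with $f|_F=1$, $f|_{T\setminus U}=0$, and shows $|\mu(E)(a-g[a])|\le 4\e$ by reducing everything to $|\mu(T)(fa)-\mu(T)(g[fa])|=0$, which is exactly the twisted invariance of the total functional (using that the scalar function $f$ commutes with the twisted action, $g[fa]=f\,g[a]$). You instead transport the measure itself, $\mu_g(E)(a):=\mu(E)(g[a])$, verify that $\mu_g$ is again an RBA AOFM with $\mu_g(T)=\mu(T)$, and appeal to the isometry half of Theorem~\ref{teo:razryv}: since RBA AOFM form a normed linear space and $\nu\mapsto\nu(T)$ is isometric, $(\mu_g-\mu)(T)=0$ forces $\mu_g=\mu$. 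Your axiom checks are sound, and you correctly isolate the one nonroutine point: the support condition requires $\supp\widehat{g[a]}=\supp\wh a$, which holds because each Glimm quotient map is a unital $*$-homomorphism, hence carries the unitaries $L_g$, $L_{\phi(g^{-1})}$ to unitaries and preserves fiberwise norms. What your argument buys is conceptual clarity: it exhibits the identification of Theorem~\ref{teo:razryv} as equivariant for the twisted $G$-action, so invariance of the functional and setwise invariance of the measure become literally the same statement, and the conclusion is obtained for all $E$ at once. What the paper's estimate buys is economy: it never needs $\mu_g$ to be an AOFM (in particular it avoids the support-preservation computation), using only the regularity of $\mu$ itself together with Lemma~\ref{lem:weakreg1}. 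Both proofs ultimately rest on $*$-weak regularity --- yours indirectly, through the isometry of Theorem~\ref{teo:razryv}, whose proof uses it.
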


\begin{proof}
This is an immediate consequence of *-weak regularity.
Let
$a\in A$, $g\in G$  and
$\e>0$ be an arbitrary small number, and $U$ and $F$ as in
Lemma~\ref{lem:weakreg1}, for $a$ and $g[a]$ simultaneously.
Let us choose a continuous
function $f:T\to [0,1]$, with $f|_F=1$ and $f|_{T\setminus U}=0$.
Then
$$
|\mu(E)(a-g[a])|=
|\mu(E\setminus F)(a)
+ \mu(F)(a)
-\mu(E\setminus F)(g[a])
- \mu(F)(g[a])|
$$
$$
\le |\mu(F)(a)-\mu(F)(g[a])|+2\e =|\mu(F)(fa)-\mu(F)(g[fa])|+2\e
$$
$$
=|\mu(U)(fa)-\mu(U\setminus F)(fa)-\mu(U)(g[fa])
+\mu(U\setminus F)(g[fa])|+2\e
$$
$$
\le |\mu(U)(fa)-\mu(U)(g[fa])|+4\e =|\mu(T)(fa)-\mu(T)(g[fa])|+4\e =4\e.
$$
\end{proof}

\begin{lem}
For any twisted-invariant functional $\f$ on $C^*(G)$ the corresponding
measure $\mu$ is concentrated in the set $GFP$ of generalized fixed points.
\end{lem}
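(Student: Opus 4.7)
The plan is to show that $\mu(K)(a)=0$ for every $a\in A$ and every closed $K\ss T$ with $K\cap GFP=\emptyset$; by the *-weak regularity built into Definition \ref{dfn:regul} this forces $\mu(E)=0$ on every Borel $E\ss T\setminus GFP$, so $\mu$ is concentrated on $GFP$. Set $L=\overline{\Span}\{b-g[b]:b\in A,\ g\in G\}\ss A$. By Lemma \ref{lem:ograninv}, for every Borel $E$ the functional $\mu(E)$ is twisted-invariant, hence kills every $b-g[b]$ and therefore vanishes on $L$. So everything reduces to an approximation statement: given $K$ as above and $\e>0$, I want to write $a$ as $\ell+r$ with $\ell\in L$, $\|r\|$ small, and $\supp\wh r\cap K=\emptyset$.

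The local ingredient is the definition of $GFP$ itself. For each $t\in K$ the Glimm ideal $I_t$ is not a generalized fixed point, so the image $L_t$ of $L$ in $A_t=A/I_t$ is dense; pick $\ell_t\in L$ with $\|\wh a(t)-\wh\ell_t(t)\|_{A_t}<\e$. By the continuity-of-norm axiom in the full algebra of operator fields (Section \ref{sec:algfiel}) this inequality persists on some open neighborhood $V_t$ of $t$. To glue these local choices I use the Dauns-Hofmann identification $Z(M(A))\cong C_b(T)$: each $f\in C_b(T)$ acts on sections pointwise, and because the group unitaries $L_g,L_{\phi(g^{-1})}\in M(A)$ commute with the center of $M(A)$ one gets $f\cdot g[b]=g[f\cdot b]$, so $L$ is a $C_b(T)$-submodule of $A$.

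Using $t$-compactness of $K$, extract a finite subcover $V_{t_1},\dots,V_{t_n}$, and by normality of $T$ produce continuous $f_1,\dots,f_n$ with $\supp f_i\ss V_{t_i}$, $0\le f_i$, $\sum f_i\le 1$ on $T$, and $\sum f_i\equiv 1$ on an open neighborhood $U$ of $K$ (Urysohn together with an inner cover of $K$). Put $\ell:=\sum_i f_i\,\ell_{t_i}\in L$ and $a':=\bigl(\sum_i f_i\bigr)a$. Since $\wh a-\wh a'=\bigl(1-\sum f_i\bigr)\wh a$ vanishes on $U$, the closed support of $\widehat{a-a'}$ misses $K$. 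Pointwise, for every $s\in T$,
$$
\|\wh a'(s)-\wh\ell(s)\|_{A_s}\le \sum_i f_i(s)\,\|\wh a(s)-\wh\ell_{t_i}(s)\|_{A_s}\le\e\sum_i f_i(s)\le\e,
$$
so $\|a'-\ell\|_A\le\e$.

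Now decompose
$$
\mu(K)(a)=\mu(K)(a-a')+\mu(K)(a'-\ell)+\mu(K)(\ell).
$$
The first term vanishes by the support axiom in Definition \ref{dfn:aofm} (since $\supp\widehat{a-a'}\cap K=\emptyset$), the third vanishes because $\mu(K)$ annihilates $L$ by twisted invariance, and $|\mu(K)(a'-\ell)|\le\|\mu\|\cdot\e$. Letting $\e\to0$ gives $\mu(K)(a)=0$, which is the desired conclusion. The delicate point I expect to spend the most care on is checking that $L$ is really a $C_b(T)$-module in the Dauns-Hofmann picture: one must verify that central multipliers commute with the twisted action, and arrange the partition of unity so that $\sum f_i\equiv 1$ on a \emph{neighborhood} of $K$ rather than merely on $K$, since the AOFM vanishing axiom demands a support condition, not just vanishing at points of $K$.
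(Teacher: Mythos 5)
Your proof is correct, and it rests on the same core ingredients as the paper's: Lemma \ref{lem:ograninv} (so that each $\mu(E)$ annihilates twisted coboundaries $b-g[b]$), the fiberwise density of $\Span\{b-g[b]\}$ in $A_t$ at every point outside $GFP$, continuity of $t\mapsto\|\wh c(t)\|$, and compactness. But your localization mechanism is genuinely different. The paper argues by contradiction: it takes $a$ vanishing on $GFP$ with $\f(a)\ne 0$, covers the space by neighborhoods $U_{t_j}$ on which $a$ is uniformly approximated by a finite sum of coboundaries, picks a Borel partition $\{E_j\}$ subordinate to that cover, writes $\f(a)=\sum_j\mu(E_j)(a)$, kills the coboundary part of each summand by twisted invariance of $\mu(E_j)$, and bounds the remainders piecewise. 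You instead prove directly that $\mu(K)(a)=0$ for every closed $K$ disjoint from $GFP$, gluing the local approximants into a \emph{single} global element $\ell\in L$ via a partition of unity; this needs the extra observation that $L$ is a $C_b(T)$-submodule through the Dauns--Hofmann identification $Z(M(A))\cong C_b(T)$ (central multipliers commute with $L_g$ and $L_{\phi(g^{-1})}$), after which the AOFM support axiom of Definition \ref{dfn:aofm} is used exactly once, and *-weak regularity finishes. What your route buys: the estimate $\|a'-\ell\|\le\e$ is a genuine global norm bound, so only $\|\mu(K)\|\le\|\mu\|$ is needed at the end; by contrast, the paper's piecewise bound $|\mu(E_j)(c_j)|\le\|\mu(E_j)\|\,\e/2$ is not literally licensed by $|\mu(E_j)(c_j)|\le\|\mu(E_j)\|\,\|c_j\|$, since $c_j$ is small only on $U_{t_j}$ and may be large elsewhere -- repairing this needs exactly the kind of cutoff-function localization (cf. Lemma \ref{lem:weakreg1}) that you make explicit. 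Working with a compact $K\ss T\setminus GFP$ also sidesteps a glossed-over point in the paper: the $U_t$ are defined only for $t\notin GFP$, and $T\setminus GFP$ need not be compact, so the paper's finite subcover implicitly uses that $\wh a$ vanishes on $GFP$ to complete the cover there. What the paper's route buys is brevity -- no module structure or partition of unity is invoked -- and it delivers the statement in the form used afterwards (a twisted-invariant functional kills every section vanishing on $GFP$), which your formally stronger conclusion ($\mu(E)=0$ for every $E$ in the algebra missing $GFP$) recovers by the regularity argument you sketch.
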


\begin{proof}
Let $\|\mu\|=1$. Suppose opposite: there exists an element $a\in A$,
$\|a\|=1$, vanishing on generalized fixed points, such that $\f(a)\ne 0$.
Let $\e:=|\f(a)|>0$. In each point $t\not\in GFP$ we can find elements
$b^i_t\in A$, $g^i_t\in G$, $i=1,\dots k(t)$,
such that
$$
\|a(t)-\sum_{i=1}^{k(t)}(g^i_t[b^i_t](t)-b^i_t(t))\|<\e/4.
$$
Then there exists a
neighborhood $U_t$ such that for $s\in U_t$ one has
$$
\|a(s)-\sum_{i=1}^{k(t)}(g^i_t[b^i_t](s)-b^i_t(s))\|<\e/2.
$$
  Let us choose a finite subcovering
$\{U_{t_j}\}$, $j=1,\dots,n$,
of $\{U_t\}$
and a Borel partition $E_1,\dots,E_n$ subordinated to this subcovering.
Then
$$
\f(a)=\sum_{j=1}^n \mu(E_j)(a)=
\sum_{j=1}^n \mu(E_j)\left(a-\sum_{i=1}^{k(t_j)}(g^i_{t_j}[b^i_{t_j}]
-b^i_{t_j})\right)
+\sum_{j=1}^n \sum_{i=1}^{k(t_j)}\mu(E_j)(g^i_{t_j}[b^i_{t_j}]
-b^i_{t_j}).
$$
By Lemma \ref{lem:ograninv} each summand in the second term is zero.
The absolute value of the first term is less then
$\sum_j\|\mu(E_j)\| \e/2 \le \|\mu\|\cdot \e/2=\e/2$.
A contradiction with $|\f(a)|=\e$.
\end{proof}

Since a functional $\f$ on $A_I$ is twisted-invariant if and only if
$\Ker\f\supset K_I$, the dimension of the space of these functionals
equals the dimension of the space of functionals on $A_I/K_I$ and is
finite  if and only if the space $A_I/K_I$ is
finite dimensional. In this case the dimension of the space of
twisted-invariant functionals on $A_I$ equals $\dim(A_I/K_I)$.

\begin{dfn}
\emph{Generalized number $S_*(\phi)$ of fixed points} of $\wh\phi$
on Glimm spectrum is
$$
S_*(\phi):=\sum_{I\in GFP} \dim (A_I/K_I).
$$
\end{dfn}

Since functionals associated with measures,
which are concentrated in different points,
are linearly independent (the space is Hausdorff),
the argument above gives the following statement.
\begin{teo}[Weak generalized Burnside]
$$
R_*(\phi)=S_*(\phi)
$$
if one of these numbers is finite.
\end{teo}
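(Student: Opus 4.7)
The proof is essentially a repackaging of the three facts that have been proved in the two sections just above: (i) by Theorem~\ref{teo:razryv}, functionals on $A=C^*(G)$ correspond bijectively and isometrically to regular bounded additive AOFMs $\mu$ on the Glimm spectrum $T$; (ii) by the lemma immediately preceding the theorem, any such $\mu$ associated to a twisted-invariant functional is concentrated on the set $GFP$ of generalized fixed points; (iii) for each $I\in GFP$ the space of twisted-invariant functionals that factor through $A_I=A/I$ has dimension $\dim(A_I/K_I)$ and this quantity is non-zero exactly when $I\in GFP$. So my plan is to assemble these into a direct-sum decomposition of the space of twisted-invariant functionals, indexed by points of $GFP$.

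Assume first that $S_*(\phi)<\infty$. Then only finitely many $I\in T$ satisfy $\dim(A_I/K_I)>0$, say $I_1,\dots,I_k$, and each of these dimensions is finite. Since $T$ is Hausdorff, $\{I_1,\dots,I_k\}$ consists of pairwise separated points, so for any RBA~AOFM $\mu$ concentrated on $GFP$ the restrictions $\mu_j:=\mu(\{I_j\})$ are well-defined linear functionals on $A$, and by Lemma~\ref{lem:ograninv} each $\mu_j$ is itself twisted-invariant. The functional $\mu_j$ vanishes on $I_j$ (since $\mu$ is supported at $I_j$ for this summand, using the operator-field description $\hat a(I_j)=a+I_j$), so it factors through $A_{I_j}$, and twisted-invariance together with the definition of $K_{I_j}$ forces it to vanish on $K_{I_j}$. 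Hence each $\mu_j$ corresponds to an element of $(A_{I_j}/K_{I_j})^*$, and the assignment $\mu\mapsto(\mu_1,\dots,\mu_k)$ gives a linear map from twisted-invariant functionals to $\bigoplus_{j=1}^k(A_{I_j}/K_{I_j})^*$. Its inverse is assembled from the Hausdorff separation of points in $T$: given a tuple $(\psi_1,\dots,\psi_k)$, pick for each $j$ an element $a_j\in A$ with $\hat a_j(I_j)=$ unit, $\hat a_j(I_i)=0$ for $i\neq j$ (possible by Hausdorffness and the sectional representation), and define the measure $\mu=\sum_j\delta_{I_j}\otimes\psi_j$ in the obvious sense. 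This gives $R_*(\phi)=\sum_j\dim(A_{I_j}/K_{I_j})=S_*(\phi)$.

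For the reverse finiteness direction, assume $R_*(\phi)<\infty$. If $GFP$ were infinite, one could choose a countably infinite sequence $I_1,I_2,\dots$ of distinct points in $GFP$. For each $n$ choose a non-zero twisted-invariant functional $\psi_n$ on $A_{I_n}$ (exists because $A_{I_n}/K_{I_n}\neq 0$) and let $\tilde\psi_n$ be its pullback to $A$. The $\tilde\psi_n$ are linearly independent: given any finite linear relation $\sum c_n\tilde\psi_n=0$, pick $a_n\in A$ with $\hat a_n(I_n)\neq 0$ on a representative realizing $\psi_n\neq 0$ and $\hat a_n(I_m)=0$ for $m\neq n$ in the finite set, using Hausdorffness of $T$ and density of sections; evaluating gives $c_n=0$. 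This contradicts $R_*(\phi)<\infty$. A similar argument shows each $\dim(A_I/K_I)$ must be finite.

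The main technical point --- and what I expect to be the only place needing some care --- is the extraction of the separating sections $a_j$ with the required values modulo each $I_j$, i.e.\ making precise the phrase ``functionals associated with measures concentrated in different points are linearly independent'' from the text. This rests on the Dauns--Hofmann representation $A\cong\Gamma_0(\pi'')$ together with Hausdorffness of $T$: given finitely many points of $T$ and prescribed residues in the corresponding $A_{I_j}$, a standard partition-of-unity argument in the sectional representation yields a global $a\in A$ with the prescribed residues. Once this separation lemma is in hand, every statement above reduces to bookkeeping with the already established Lemmas in Sections~\ref{sec:funstmeas}--\ref{sec:invfunsme}.
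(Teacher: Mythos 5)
Your proposal is correct and takes essentially the same route as the paper: the identification of functionals with RBA AOFMs (Theorem \ref{teo:razryv}), Lemma \ref{lem:ograninv}, the concentration of twisted-invariant measures on $GFP$, the identification of the contribution of each point $I$ with $\dim(A_I/K_I)$, and Hausdorff separation of the Glimm spectrum to get linear independence of functionals supported at distinct points. The paper compresses all of this into a single sentence after the definition of $S_*(\phi)$; your spelling out of the two finiteness directions and of the separating sections is just a more explicit bookkeeping of that same argument.
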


In~\cite{FelTro} (see also~\cite{FelshB,FelTroVer}
and the first part of the present survey paper) it was proved
for groups of type I that the Reidemeister number $R(\phi)$, i.e.,
the number of twisted (or $\phi$-)conjugacy classes, coincides with
the number of fixed points of the corresponding action $\wh\phi$ on
the dual space $\wh G$. This was a generalization of Burnside theorem.
The proof used identification of $R(\phi)$ and the dimension of the
space of ($L^\infty$-) twisted invariant functions on $G$, i.e. twisted
invariant functionals on $L^1(G)$. Since only part of $L^\infty$-functions
defines functionals on $C^*(G)$ (namely, Fourier-Stieltjes functions),
so, \emph{a priori} one has
$R_*(\phi)\le R(\phi)$. Nevertheless, the functions with some
symmetry conditions very often are in  Fourier-Stieltjes algebra,
so one can conjecture that $R(\phi)=R_*(\phi)$ if $R(\phi)<\infty$.
This is the case for all known examples.

\section{Finite-dimensional representations and almost polycyclic groups} \label{sec:polyc}


Let us start from several useful facts.

\begin{teo}[{\cite[Theorem 1.41]{Robinson72-1}}]\label{teo:fingeneinind}
If $G$ is a finitely generated group and $H$ is a subgroup with
finite index in $G$, then $H$ is finitely generated.
\end{teo}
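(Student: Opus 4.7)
The plan is the classical Schreier rewriting argument. Fix a finite symmetric generating set $S=\{s_1,\dots,s_k\}$ for $G$ (enlarging $S$ to include inverses if necessary), and pick a right transversal $T$ for $H$ in $G$ containing $1$; since $[G:H]<\infty$, $T$ is finite. For $g\in G$ write $\overline{g}\in T$ for the unique coset representative of $Hg$, so that $g\,\overline{g}^{\,-1}\in H$.

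The proposed finite generating set for $H$ is
$$
\Sigma=\{\,t\,s\,\overline{t\,s}^{\,-1}\ :\ t\in T,\ s\in S\,\}\subset H,
$$
whose elements lie in $H$ because $t\,s$ and $\overline{t\,s}$ belong to the same right coset. The set $\Sigma$ is finite because both $T$ and $S$ are finite.

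The main step is to show that $\Sigma$ generates $H$. Given $h\in H$, express it in the generators as $h=s_{i_1}s_{i_2}\cdots s_{i_n}$ with $s_{i_j}\in S$. Define $t_1=1$ and recursively $t_{j+1}=\overline{t_j s_{i_j}}\in T$. Inserting the trivial word $\overline{t_j s_{i_j}}^{\,-1}\,\overline{t_j s_{i_j}}$ between consecutive letters telescopes $h$ into
$$
h=\bigl(t_1 s_{i_1}\overline{t_1 s_{i_1}}^{\,-1}\bigr)\bigl(t_2 s_{i_2}\overline{t_2 s_{i_2}}^{\,-1}\bigr)\cdots\bigl(t_n s_{i_n}\overline{t_n s_{i_n}}^{\,-1}\bigr)\cdot t_{n+1}.
$$
A short induction, using the fact that $\overline{gk}$ depends only on the coset $Hg$, shows $t_{j+1}=\overline{s_{i_1}\cdots s_{i_j}}$, so the trailing factor is $t_{n+1}=\overline{h}$. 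Because $h\in H$ we have $\overline{h}=\overline{1}=1$, and $h$ is exhibited as a product of elements of $\Sigma$.

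I anticipate no real obstacle; the argument is pure bookkeeping with Schreier transversals. The only point requiring attention is the inductive identification $t_{j+1}=\overline{s_{i_1}\cdots s_{i_j}}$, which follows at once from $\overline{gk}=\overline{g'k}$ whenever $g$ and $g'$ lie in the same right coset of $H$.
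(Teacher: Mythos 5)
Your proof is correct: it is the classical Schreier-transversal (rewriting) argument, and every step checks out — the elements $t\,s\,\overline{t\,s}^{\,-1}$ do lie in $H$, the telescoping identity is valid, the induction $t_{j+1}=\overline{s_{i_1}\cdots s_{i_j}}$ follows from the fact that $\overline{\,\cdot\,}$ is constant on right cosets, and taking $S$ symmetric lets you write $h$ without inverses so that only elements of $\Sigma$ (not their inverses) appear. Note, however, that the paper itself offers no proof to compare against: it imports the statement wholesale as a citation of Robinson (Theorem 1.41 of \emph{Finiteness conditions and generalized soluble groups}), so your argument supplies a complete, self-contained justification for what the paper treats as a black box; this is in fact the standard proof of the result (essentially Schreier's lemma, which also gives the quantitative bound that $H$ is generated by at most $[G:H]\cdot |S|$ elements).
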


\begin{lem}\label{lem:normalcharfiniteind}
Let $G$ be finitely generated, and $H'\ss G$
its subgroup of finite index. Then there is a characteristic
subgroup $H\ss G$ of finite index, $H\ss H'$.
\end{lem}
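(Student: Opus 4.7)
The plan is to produce $H$ as the intersection of all subgroups of $G$ having the same index as $H'$, and then to verify the three required properties (characteristic, finite index, contained in $H'$).

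Set $n := [G:H']$. First I would invoke the classical fact (essentially due to Marshall Hall) that a finitely generated group has only finitely many subgroups of any given finite index $n$: each such subgroup is the stabilizer of a point under a transitive action of $G$ on a set with $n$ elements, so it is determined by a homomorphism $G \to S_n$ together with a choice of point, and there are only finitely many such homomorphisms since $G$ is finitely generated and $S_n$ is finite. Let $K_1,\dots,K_r$ be the complete list of subgroups of $G$ of index $n$, and define
$$
H \; := \; \bigcap_{i=1}^r K_i.
$$

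Next I would verify the three properties. Containment $H \subset H'$ is immediate since $H'$ is among the $K_i$. Finiteness of $[G:H]$ follows from Poincar\'e's theorem: a finite intersection of subgroups of finite index has finite index (so in fact $[G:H]$ divides $n!\,$). Finally, characteristicity: any automorphism $\alpha\in\Aut(G)$ sends subgroups of index $n$ to subgroups of index $n$, hence permutes the finite set $\{K_1,\dots,K_r\}$; therefore $\alpha(H) = H$.

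I do not expect a genuine obstacle here. The only nontrivial ingredient is the finiteness of the set of index-$n$ subgroups, which is standard for finitely generated groups; without finite generation the intersection could run over infinitely many subgroups and fail to have finite index, so this hypothesis is essential (and could be cited from, e.g., Robinson's text alongside Theorem~\ref{teo:fingeneinind}). Everything else is a one-line verification.
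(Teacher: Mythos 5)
Your proposal is correct and follows essentially the same route as the paper: the paper also defines $H$ as the intersection of the finitely many subgroups of $G$ having the same index as $H'$ (citing Hall and Kurosh for finiteness of that collection) and observes that this intersection is characteristic and of finite index. You merely spell out the standard justifications (the coset action into $S_n$, Poincar\'e's theorem, and permutation of the finite family by automorphisms) that the paper leaves implicit.
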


\begin{proof}
Since $G$ is finitely generated, there is only finitely many
subgroups of the same index as $H'$
(see \cite{Hall}, \cite[\S\ 38]{Kurosh}).
Let $H$ be their intersection. Then $H$ is characteristic, in
particular normal, and of finite index.
\end{proof}

\begin{teo}[see \cite{Jiang}]\label{teo:reidandcokerabel}
Let $A$ be a finitely generated Abelian group, $\psi:A\to A$ its
automorphism. Then $R(\psi)=\#\Coker(\psi-\Id)$, i.e. to the index
of subgroup generated by elements of the form $x^{-1}\psi(x)$.
\end{teo}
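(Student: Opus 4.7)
The plan is to use Lemma~\ref{lem:charabelclassred} as the starting point: since $A$ is Abelian, the twisted conjugacy class $H$ of the identity is already identified as a subgroup, and all other twisted conjugacy classes are its cosets $gH$. Thus $R(\psi)$ is simply the index $[A:H]$, and it only remains to identify $H$ explicitly.

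By definition, $H$ consists of all elements of the form $h \cdot \psi(h^{-1})$ with $h \in A$. Writing the group operation additively, this is exactly $\{h - \psi(h) : h \in A\} = \Im(\Id - \psi) = \Im(\psi - \Id)$, where the last equality holds because the image of a homomorphism is unchanged under negation. Therefore
$$
R(\psi) = [A : H] = \#\bigl(A / \Im(\psi - \Id)\bigr) = \#\Coker(\psi - \Id),
$$
which is the desired formula, and this number equals the index of the subgroup generated by the elements $x^{-1}\psi(x)$ as claimed.

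There is essentially no obstacle here beyond correctly unpacking definitions, since the hard work has already been done in Lemma~\ref{lem:charabelclassred}; the only point worth double-checking is that the two descriptions of $H$ (as a union of twisted classes versus as the image of $\psi - \Id$) coincide as subgroups rather than merely as sets, but this is immediate from the first displayed identity in the proof of that lemma. No finiteness assumption on $A$ is needed for the identification of classes with cosets; the finite generation hypothesis is only relevant insofar as it ensures that when $R(\psi)$ is finite, $\Coker(\psi - \Id)$ is a meaningful finitely generated Abelian group (and, consistent with the spectral criterion recalled after Theorem~\ref{teo:mainth3}, $R(\psi) < \infty$ iff $1$ is not in the spectrum of $\psi$ restricted to the free part of $A$, which is precisely when $\psi - \Id$ has finite cokernel).
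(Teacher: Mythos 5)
Your proof is correct and follows essentially the same route as the paper's: the paper likewise invokes Lemma~\ref{lem:charabelclassred} to identify $R(\psi)$ with the index of the subgroup $H=\{e\}_\psi$ and then observes that $H$ consists precisely of the elements $x^{-1}\psi(x)$, i.e.\ is the image of $\psi-\Id$. Your additional remarks (that $\Im(\Id-\psi)=\Im(\psi-\Id)$ and that finite generation is not needed for the coset identification) are accurate elaborations of details the paper leaves implicit.
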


\begin{proof}
By Lemma \ref{lem:charabelclassred}, $R(\psi)$ is equal to the index of the subgroup
$H=\{e\}_\psi$. This group consists by definition
of elements of the form $x^{-1}\psi(x)$.
\end{proof}


Let us remind the following definitions of a class of groups.

\begin{dfn}\label{dfn:FCgroup}
A group with finite conjugacy classes is called FC-\emph{group}.
\end{dfn}

In a FC-group the elements of finite order form a characteristic
subgroup with locally infinite abelian factor group;
a finitely generated FC-group contains in its center a free abelian
group of finite index in the whole group \cite{BNeumann}.

\begin{lem}\label{lem:fgFChasfinitefix}
An automorphism $\phi$ of a finitely generated {\rm FC}-group $G$
with $R(\phi)<\infty$ has a finite number of fixed points.

The same is true for $\t_x\circ \phi$. Hence, the number of
$g\in G$ such that for some $x\in G$
$$
g x \phi(g^{-1})=x,
$$
is still finite.
\end{lem}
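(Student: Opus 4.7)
The plan is to reduce to the abelian case via the strong structural properties of finitely generated FC-groups. By B.~H.~Neumann's theorem, such a $G$ contains a free abelian central subgroup of finite index; by Lemma~\ref{lem:normalcharfiniteind} I would refine it to a characteristic subgroup $H\ss G$ of finite index (still free abelian and central, as a subgroup of a free abelian central subgroup). Being characteristic, $H$ is $\phi$-invariant, and by Theorem~\ref{teo:fingeneinind} it is itself finitely generated, so we obtain a $\phi$-respecting extension $H\to G\to G/H$ with $H$ finitely generated free abelian and $G/H$ finite. Denote $\phi':=\phi|_H$ and by $\ov\phi$ the induced automorphism on $G/H$.

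Next, I would invoke Lemma~\ref{lem:fixedofextens} to get
$$
\#\Fix(\phi)\le \#\Fix(\phi')\cdot \#\Fix(\ov\phi).
$$
Trivially $\#\Fix(\ov\phi)\le |G/H|<\infty$. For the other factor, inequality (\ref{eq:ozenreidcherpernonabel}) of Lemma~\ref{lem:ozenkacherezperiod} applied to our extension gives $R(\phi')\le \#\Fix(\ov\phi)\cdot (R(\phi)-R(\ov\phi)+1)<\infty$, and Theorem~\ref{teo:fixandreidforabel} then yields $\#\Fix(\phi')\le R(\phi')<\infty$. This proves the first assertion.

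For the second, $\t_x\circ\phi$ is again an automorphism of $G$ with $R(\t_x\circ\phi)=R(\phi)<\infty$ by Corollary~\ref{cor:rphiequaltaurphi}, so the argument above applies verbatim and gives $\#\Fix(\t_x\circ\phi)<\infty$ for every $x$. For the final count, I would observe that the relation $gx\phi(g^{-1})=x$ is equivalent to $g=\t_x(\phi(g))$, i.e., $g\in\Fix(\t_x\circ\phi)$. Moreover $\t_x\circ\phi=\t_y\circ\phi$ if and only if $xy^{-1}\in Z(G)$, and in a finitely generated FC-group the center $Z(G)=\bigcap_i C_G(g_i)$ (the intersection taken over a finite generating set) is a finite intersection of subgroups of finite index (each $C_G(g_i)$ has finite index because the conjugacy class of $g_i$ is finite), hence $[G:Z(G)]<\infty$. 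Therefore only finitely many distinct automorphisms $\t_x\circ\phi$ arise as $x$ runs over $G$, and the set in question is a finite union of finite sets.

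The main obstacle is essentially the combinatorial assembly: one needs $H$ to be simultaneously characteristic (hence $\phi$-invariant), free abelian, central and of finite index. Neumann's theorem together with Lemma~\ref{lem:normalcharfiniteind} delivers this in one step, after which the abelian-base estimates of Section~\ref{sec:extens} (Lemmas~\ref{lem:ozenkacherezperiod} and \ref{lem:fixedofextens}) and Theorem~\ref{teo:fixandreidforabel} mechanically produce the required finiteness; the only further ingredient is the well-known observation that $Z(G)$ has finite index in a finitely generated FC-group, which is precisely what makes the family $\{\t_x\circ\phi\}_{x\in G}$ finite and closes out the last claim.
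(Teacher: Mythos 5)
Your proof is correct and follows essentially the same route as the paper's: reduce to a $\phi$-invariant, finitely generated abelian subgroup of finite index, then chain Lemma~\ref{lem:ozenkacherezperiod}, Theorem~\ref{teo:fixandreidforabel} and Lemma~\ref{lem:fixedofextens} to bound $\#\Fix(\phi)$. The only differences are cosmetic or additive: the paper takes the center of $G$ directly (which is automatically characteristic, so Lemma~\ref{lem:normalcharfiniteind} and the freeness of Neumann's subgroup are not needed), and your closing argument for the last claim --- that only finitely many automorphisms $\t_x\circ\phi$ occur because $[G:Z(G)]<\infty$, each with finitely many fixed points by Corollary~\ref{cor:rphiequaltaurphi} --- correctly supplies a step the paper's proof leaves implicit.
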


\begin{proof}
Let $A$ be the center of $G$. As it was indicated, $A$ has a finite
index in $G$ and hence, by Theorem \ref{teo:fingeneinind}, is f.g.
Since $A$ is characteristic, one has an extension $A\to G\to G/A$
respecting $\phi$. One has $R(\phi')\le R(\phi)\cdot |G/A|$
by Lemma \ref{lem:ozenkacherezperiod}
and $\#\Fix(\phi')\le R(\phi)\cdot |G/A|$ by Theorem
\ref{teo:fixandreidforabel}. Then $\#\Fix(\phi)\le R(\phi)\cdot |G/A|^2$
by Lemma \ref{lem:fixedofextens}.
\end{proof}

\begin{dfn}\label{dfn:Rperiodi}
We say that a group $G$ has the property   {\sc RP} if for
any automorphism $\phi$ with $R(\phi)<\infty$ the
characteristic functions $f$ of {\sc Reidemeister} classes (hence
all $\phi$-central functions) are   {\sc periodic} in the
following sense.

There exists a finite group $W$,
its automorphism $\phi_W$, and epimorphism $F:G\to W$ such that
\begin{enumerate}
    \item The diagram
    $$
    \xymatrix{
G\ar[r]^\phi\ar[d]_F& G\ar[d]^F\\
W\ar[r]^{\phi_W}& W
    }
    $$
    commutes.
    \item $f=F^*f_W$, where $f_W$ is a characteristic function
    of a subset of $W$.
\end{enumerate}
If this property holds for a concrete automorphism $\phi$, we
will denote this by RP($\phi$).
\end{dfn}

\begin{rk}\label{rk:bijecofclass}
By (2) there is only one class $\{g\}_{\phi}$ which maps onto $\{F(g)\}_{\phi_W}$.
Hence, $F$ induces a bijection of Reidemeister classes.
\end{rk}

\begin{lem}\label{lem:periodihomom}
Suppose, $G$ is f.g.
and $R(\phi)<\infty$. Then
characteristic functions of $\phi$-conjugacy classes
are periodic $($i.e. $G$ has {\rm RP}$(\phi)$ $)$
if and only if their left
shifts generate a finite dimensional space.
\end{lem}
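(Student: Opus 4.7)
The plan is to prove the equivalence by handling each direction separately; the easy direction reduces to finiteness of $W$, while the converse requires an orbit--stabilizer analysis.

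For the easy direction, if $G$ has {\rm RP}$(\phi)$ with witness $F\colon G\to W$ and $\phi_W$, then for each Reidemeister class $C_i$ one has $\chi_{C_i}=F^*\chi_{\bar C_i}$ with $\bar C_i=F(C_i)\subset W$. A direct computation gives $L_g\chi_{C_i}(x)=\chi_{\bar C_i}(F(g)^{-1}F(x))$, so $L_g\chi_{C_i}=F^*(L_{F(g)}\chi_{\bar C_i})$. Since $W$ is finite, only finitely many functions $L_{F(g)}\chi_{\bar C_i}$ arise, hence the pullbacks $L_g\chi_{C_i}$ (as $g$ ranges over $G$ and $i$ over $\{1,\dots,R\}$) form a finite set whose span is finite-dimensional.

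For the converse, suppose the span $V\subset\ell^\infty(G)$ of all $L_g\chi_{C_i}$ has dimension $d<\infty$. The key step is to upgrade ``finite-dimensional span'' to ``finite orbit''; this is where the main content lies. Since elements of $V$ are functions on $G$ determined by their values, the point evaluations $\mathrm{ev}_x\colon V\to\mathbb R$ separate points and so span $V^*$; therefore one can select $x_1,\dots,x_d\in G$ for which $\mathrm{ev}\colon V\to\mathbb R^d$, $f\mapsto(f(x_j))_j$, is an isomorphism. Each $L_g\chi_{C_i}=\chi_{gC_i}\in V$ is $\{0,1\}$-valued, so $\mathrm{ev}(L_g\chi_{C_i})\in\{0,1\}^d$, giving at most $2^d$ distinct shifts. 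Thus each orbit is finite, the stabilizer $\mathrm{Stab}(C_i):=\{g\in G:gC_i=C_i\}$ has finite index for each $i$, and so does $N_0:=\bigcap_{i=1}^R\mathrm{Stab}(C_i)$.

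Now Lemma~\ref{lem:normalcharfiniteind}, applied to the finitely generated group $G$ and the finite-index subgroup $N_0$, produces a characteristic subgroup $N\subset N_0$ of finite index. Since $N$ is characteristic and $\phi$ is an automorphism, $\phi(N)=N$, so $\phi$ descends to an automorphism $\phi_W$ of the finite group $W:=G/N$, making the diagram of Definition~\ref{dfn:Rperiodi} commute with the projection $F\colon G\to W$. Because $N$ is normal and contained in every $\mathrm{Stab}(C_i)$, each $C_i$ is a union of $N$-cosets, so $\chi_{C_i}=F^*\chi_{F(C_i)}$, verifying {\rm RP}$(\phi)$.
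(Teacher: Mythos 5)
Your proof is correct and follows essentially the same route as the paper's: for the hard direction, finite-index stabilizers of the classes under left shifts, their common intersection, Lemma~\ref{lem:normalcharfiniteind} to produce a characteristic finite-index subgroup, and saturation of each class by its cosets; for the easy direction, pulling back the finitely many shifts through the finite quotient $W$. The only difference is that you make explicit, via the evaluation isomorphism into $\{0,1\}^d$, the step ``finite-dimensional span of shifts implies finitely many shifts, hence finite-index stabilizers,'' which the paper merely asserts.
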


\begin{proof}
From the supposition of finite dimension it follows that the
stabilizer of each $\phi$-conjugacy class has finite index.
Hence, the common stabilizer of all $\phi$-conjugacy classes
under left shifts is an intersection of finitely many subgroups, each
of finite index. Hence, its index is finite.
By Lemma \ref{lem:normalcharfiniteind}
there is some smaller subgroup $G_S$ of finite index
which is normal and $\phi$-invariant.
Then one can take $W=G/G_S$. Indeed, it is sufficient to verify
that the projection $F$ is one to one on classes. In other words,
that each coset of $G_s$ enters only one $\phi$-conjugacy class,
or any two elements of coset are $\phi$-conjugated. Consider
$g$ and $hg$, $g\in G$, $h\in G_s$. Since $h$ by definition preserves
classes, $hg=xg\phi(x^{-1})$ for some $x\in G$, as desired.

Conversely, if  $G$ has {\rm RP}$(\phi)$, the class $\{g\}_\phi$ is
a full pre-image $F^{-1}(S)$ of some class $S\ss W$. Then its left shift
can be described as
\begin{eqnarray*}
g'\{g\}_\phi &=& g'F^{-1}(S)=\{g'g_1 | g_1\in F^{-1}(S)\}=\{g | (g')^{-1}g\in F^{-1}(S)\}\\
&=&
\{g | F((g')^{-1}g)\in S\}=\{g | F(g)\in F(g')(S)\}=F^{-1}(F(g')(S)).
\end{eqnarray*}
Since $W$ is finite, the number of these sets is finite.
\end{proof}

\begin{rk}\label{rk:odnodliavseh}
1) In this situation in accordance with  Lemma \ref{lem:normalcharfiniteind}
the subgroup $G_S$ is characteristic, i.e. invariant under any automorphism.

2) Also, the group $G/G_S$ will serve as $W$
(i.e. give rise a bijection on sets of Reidemeister classes)
not only for $\phi$ but for
$\t_g\circ \phi$ for any $g\in G$ because they have the same collection
of left shifts of Reidemeister classes by Lemma \ref{lem:redklassed}.
\end{rk}

\begin{teo}\label{teo:RPFCexten}
Suppose, the extension {\rm (\ref{eq:extens})} satisfies
the following conditions:
\begin{enumerate}
    \item $H$ has {\rm RP};
    \item $G/H$ is {\rm FC} f.g.
\end{enumerate}
Then $G$ has {\rm RP($\phi$)}.
\end{teo}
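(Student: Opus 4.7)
The plan is to apply Lemma~\ref{lem:periodihomom}: it suffices to show that for every $\phi$-class $C\ss G$ the stabilizer $S:=\mathrm{Stab}_{\mathrm{left}}(C)=\{g\in G:gC=C\}$ has finite index in $G$. The argument has two steps: first construct a normal, $\phi$-invariant subgroup $H^*\lhd G$ of finite index in $H$ with $H^*\ss S$, and then use the FC structure of $\ov G=G/H$ to show the image of $S$ in $\ov G$ has finite index.

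For Step~1, $R(\phi)<\infty$ and the epimorphism of classes (\ref{eq:epiofclassforexs}) give $R(\ov\phi)<\infty$, so one may fix representatives $g_1,\dots,g_k$ of the $\ov\phi$-classes of $\ov G$. Lemma~\ref{lem:fgFChasfinitefix} applied to the f.g.\ FC group $\ov G$ yields $\#\Fix(\t_{\ov g}\ov\phi)<\infty$ for every $\ov g$, and Corollary~\ref{cor:rphiequaltaurphi} combined with Lemma~\ref{lem:ozenkacherezperiod} applied to the extension respecting $\t_{g_i}\phi$ then gives $R(\t_{g_i}\phi')<\infty$ for every $i$. Invoking the RP hypothesis of $H$ for each $\t_{g_i}\phi'$ produces characteristic subgroups $H_i\ss H$ of finite index through which the characteristic functions of $\t_{g_i}\phi'$-classes in $H$ factor. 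Set $H^*:=\bigcap_i H_i$; it is characteristic in $H$ of finite index, hence normal in $G$ (since every $G$-conjugation restricts to an automorphism of $H$ preserving $H^*$) and $\phi$-invariant. Using the subpartition~(\ref{eq:klasstaugsh}) together with Lemma~\ref{lem:redklassed} (which identifies $\t_y\phi'$ for any lift $y$ of an element of $\ov C$ with a $\t_{g_{i_0}}\phi'$ composed with an inner automorphism of $H$), one checks that $C\cap Hy$ is a union of right $H^*$-cosets for every such $y$, hence $H^*\ss S$.

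For Step~2 I show that $\ov S:=p(S)\ss\ov G$ has finite index. A direct calculation gives $\ov g\ov C=\ov C\ov\phi(\ov g)$, so $\mathrm{Stab}_{\mathrm{left}}(\ov C)=\ov\phi^{-1}(\mathrm{Stab}_{\mathrm{right}}(\ov C))$. Let $Z=Z(\ov G)$: by the FC f.g.\ property $Z$ is characteristic in $\ov G$, f.g.\ abelian, and of finite index. Lemma~\ref{lem:ozenkacherezperiod} applied to $Z\to\ov G\to\ov G/Z$ gives $R(\ov\phi|_Z)<\infty$, whence Theorem~\ref{teo:reidandcokerabel} makes $B:=\{e\}_{\ov\phi|_Z}$ of finite index in $Z$ and hence in $\ov G$. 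A short centrality computation, writing $\ov b=\ov y_0\ov\phi(\ov y_0^{-1})$ with $\ov y_0\in Z$ and exploiting that $\ov y_0$ commutes with $\ov c_0$ and that $\ov\phi(\ov b)\in Z$, yields $\ov C\ov\phi(\ov b)=\{\ov c_0\ov b^{-1}\}_{\ov\phi}=\ov C$, so $B\ss\mathrm{Stab}_{\mathrm{left}}(\ov C)$. For each $\ov b\in B$, the specific lift $b:=y_0\phi(y_0^{-1})\in G$ (for $y_0$ any lift of $\ov y_0$) projects to $\ov b$, and the analogous identity $y_0^{-1}c_0\phi(y_0)=c_0b^{-1}$ holds modulo $H^*$ by construction of $H^*$ in Step~1; combined with $H^*\ss S$ this shows $b\in S$, so $B\ss\ov S$ and $[G:S]\le [\ov G:B]\cdot[H:H^*]<\infty$. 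Lemma~\ref{lem:periodihomom} then yields RP($\phi$).

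The step I expect to be most delicate is the centrality lifting at the end of Step~2: in $\ov G$ the key identity $\ov y_0^{-1}\ov c_0\ov\phi(\ov y_0)=\ov c_0\ov b^{-1}$ follows immediately from $\ov y_0\in Z(\ov G)$, but a lift $y_0\in G$ is no longer central, and the identity must be established only modulo $H^*$. This is precisely why Step~1 must kill all $H$-level obstructions uniformly across the finite family $\{\t_{g_i}\phi'\}$ before the FC argument on $\ov G$ is applied. Everything else — the finiteness of the $g_i$, the intersection $H^*$, the Reidemeister-number bounds through Lemmas~\ref{lem:ozenkacherezperiod} and~\ref{lem:fgFChasfinitefix}, and the abelian computation in $Z$ — is a routine combination of the lemmas invoked.
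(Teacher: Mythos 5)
Your Step 1 essentially reconstructs the subgroup the paper calls $H_W$, and its conclusion ($H^*\ss S$) is correct, though the mechanism you cite is slightly off: for a general lift $y$ of an element of $\ov C$ one has $\t_y\phi'=\t_z\circ(\t_{g_{i_0}}\phi')\circ\t_z^{-1}$ composed with an inner automorphism of $H$, where $z\in G$ is \emph{not} in $H$; so the classes of $\t_y\phi'$ are obtained from those of $\t_{g_{i_0}}\phi'$ by applying $\t_z$ and translating, and the claim that they are unions of $H^*$-cosets survives only because $\t_z(H^*)=H^*$, i.e. because $H^*$ is normal in $G$ (which you do establish). The genuine gap is the centrality lifting in Step 2. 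You assert that $y_0^{-1}c_0\phi(y_0)=c_0b^{-1}$ holds modulo $H^*$ ``by construction of $H^*$ in Step 1'', but nothing in Step 1 gives this. The subgroup $H^*$ is built solely from the RP-structure of $H$ for the finitely many automorphisms $\t_{g_i}\phi'$: it controls how the cosets $Hy$ split into twisted classes, and it exerts no control whatsoever over the defect $\eta=\bigl(c_0\phi(y_0)y_0^{-1}\bigr)^{-1}\bigl(y_0^{-1}c_0\phi(y_0)\bigr)$, which is a commutator-type element measuring the failure of a lift of a central element of $\ov G$ to be central in $G$. This defect lies in $H$ but can meet every coset of $H^*$: already in the discrete Heisenberg group $G=\<x,y,z \:|\: [x,y]=z,\ z \mbox{ central}\>$ with $H=\<y,z\>$ and $\ov G=\<\ov x\>$, one has $[x^n,y^mz^k]=z^{nm}$, so if $H^*\cap\<z\>=\<z^N\>$ these commutators escape $H^*$ for suitable $m$ unless $N$ divides $n$. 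Hence your set $B$ does not lift into $S$; only the part of it coming from a suitable finite-index subgroup does, and identifying that subgroup is exactly what is missing.

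The missing ingredient is the heart of the paper's proof, which avoids lifting central elements altogether. The paper passes to the quotient $G_1=G/H_W$, an extension of the f.g.\ FC group $G/H$ by the finite group $W=H/H_W$, checks that $G_1$ is finitely generated, and then bounds the number of distinct inner automorphisms $\t_y$ of $G_1$ by $\prod_j |W|\cdot \#\{\t_{\ov z}(p(x_j))\}$ over a finite generating set $\{x_j\}$ --- this is the precise point where FC of $G/H$ and finiteness of $W$ are combined. Finiteness of $\{\t_y\}$ is equivalent to $Z(G_1)$ having finite index in $G_1$, which is exactly the statement your lifting step needs (elements of the preimage of $Z(G_1)$ in $G$ are the $y_0$ whose commutation defects do land in $H^*$). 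With that finiteness in hand, Lemma \ref{lem:redklassed}, Corollary \ref{cor:rphiequaltaurphi} and $R(\t_y\phi)=R(\phi)<\infty$ show that the left shifts of a class in $G_1$ form a finite collection of sets, and Lemma \ref{lem:periodihomom} applied to $G_1$ (then composed with $G\to G_1$) finishes the proof. So your outline can be repaired, but only by inserting this counting argument for $G/H^*$; as written, the inference ``$b\in S$'' is unsupported, and flagging the step as delicate does not substitute for proving it.
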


\begin{proof}
We have $R(\ov\phi)<\infty$, hence $\#\Fix(\ov\phi)<\infty$
by Lemma \ref{lem:fgFChasfinitefix} as well as $\#\Fix(\t_z\ov\phi)<\infty$ for any $z\in G/H$.
Then by Lemma \ref{lem:ozenkacherezperiod} $R(\t_g\phi')<\infty$ for any $g\in G$.
Let $g_1,...,g_s$, $s=R(\ov\phi)$, be elements of $G$ which are mapped by $p$
to different Reidemeister classes of $\ov\phi$.
Now we
can apply the supposition that $H$ has RP and find a characteristic subgroup
$H_W:=\Ker F\ss H$ of finite index such that $F:H\to W$ gives rise a bijection
for Reidemeister classes of each $\t_{g_i}\circ \phi'$, $i=1,\dots,s$, moreover,
it is contained in the stabilizer of each twisted conjugacy class of each of these
automorphisms.
We choose it in a way to satisfy Remark \ref{rk:odnodliavseh}.
In particular, it is normal in $G$.
Hence, we can take a quotient by $H_W$ of the extension
(\ref{eq:extens}):
$$
\xymatrix{
H\ar@{^{(}->}[r]\ar[d]^F&  G\ar[d]^{F_1} \ar[rd]^p &\\
H/H_W\ar@{^{(}->}[r]\ar@{=}[d]&  G/H_W\ar@{=}[d] \ar[r]^p & G/H\ar@{=}[d]\\
W\ar@{^{(}->}[r]& G_1 \ar[r]^p& \G.
}
$$
The quotient map $F_1: G\to G/H_W$ takes $\{g\}_\phi$ to
$\{g\}_\phi$ and it is a unique class with this property
(we conserve the notations $e,g,\phi$ for the quotient
objects). Indeed, suppose two classes are mapped onto one.
Hence, $g_i h$ and $g_i  h \wh h_W$ belong to the same (different) classes as $g$ and $gh_W$.
Moreover, they can not be $\phi$-conjugate by elements of $H$. Hence (cf. (\ref{eq:klasstaugsh})),
the elements $h$ and $h \wh h_W$ are not $(\t_{g_i}\circ \phi')$-conjugate in $H$. But this contradicts
the choice of $H_W\ni \wh h_W$.

By Lemma \ref{lem:periodihomom} (applied to $G_1$ and concrete
automorphism $\phi$) for the purpose to find a map $F_2:G_1\to W_1$
with properties (1) and (2) of the Definition \ref{dfn:Rperiodi}
it is sufficient to verify that shifts of the characteristic
function of $\{h\}_\phi\ss G_1$ form a finite dimensional space, i.e.
the shifts of  $\{h\}_\phi\ss G_1$
form a finite collection of subsets of $G_1$.
After that one can take the composition
$$
G\ola{F_1} G_1 \ola{F_2}  W_1
$$
to complete the proof of theorem.

Let us observe, that we can apply Lemma \ref{lem:periodihomom} because
the group $G_1$ is finitely generated: we can take
as generators all elements of $W$ and some pre-images $s(z_i)\in G_1$
under $p$ of a finite system of generators $z_i$ for $\G$.
Indeed, for any $x\in G_1$ one can find some product of $z_i$ to be
equal to $p(x)$. Then the same product of $s(z_i)$ differs from $x$ by
an element of $W$.

Let us prove that the mentioned space of shifts is finite-dimensional.
By Lemma \ref{lem:redklassed} these shifts of
$\{h\}_\phi\ss G_1$ form a subcollection of
$$
\{x\}_{\t_y\circ \phi},\quad x,y\in G_1.
$$
Hence, by Corollary \ref{cor:rphiequaltaurphi} it is sufficient to
verify that the number of different automorphisms $\t_y:G_1\to G_1$
is finite.

Let $x_1,\dots,x_n$ be some generators of $G_1$. Then the number of different
$\t_y$ does not exceed
$$
\prod_{j=1}^n \# \{\t_y(x_j)\:|\:y\in G_1\}\le
\prod_{j=1}^n |W|\cdot \# \{\t_{z}(p(x_j))\: |\: z\in \G\},
$$
where the last numbers are finite by the definition of FC for $\G$.
\end{proof}


Now we describe using Theorem \ref{teo:RPFCexten}
some classes of RP groups. Of course these classes are only
a small part of possible corollaries of this theorem.

Let $G'=[G,G]$ be the \emph{commutator subgroup} or
\emph{derived group} of $G$, i.e. the subgroup generated by
commutators. $G'$ is invariant under any homomorphism, in
particular it is normal. It is the smallest normal subgroup of $G$
with an abelian factor group. Denoting $G^{(0)}:=G$, $G^{(1)}:=G'$,
$G^{(n)}:=(G^{(n-1)})'$, $n\ge 2$, one obtains \emph{derived series}
of $G$:
\begin{equation}\label{eq:derivedseries}
    G=G^{(0)}\supset G'\supset G^{(2)}\supset \dots\supset G^{(n)}\supset
\dots
\end{equation}
If $G^{(n)}=e$ for some $n$, i.e. the series (\ref{eq:derivedseries})
    stabilizes by trivial group,
    the group $G$ is \emph{solvable};

\begin{dfn}\label{dfn:polycgroup}
A solvable group with derived series with cyclic factors is called
\emph{polycyclic group}.
\end{dfn}

\begin{teo}\label{teo:polyciclRP}
Any polycyclic group is {\rm RP}.
\end{teo}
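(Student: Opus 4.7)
The plan is to argue by induction on the derived length $n$ of the polycyclic group $G$, with Theorem~\ref{teo:RPFCexten} handling the inductive step and the cyclic case serving as the base.

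For the base case ($n\le 1$), Definition~\ref{dfn:polycgroup} forces $G=G/G'$ to be cyclic, in particular finitely generated abelian. For any automorphism $\phi$ of $G$ with $R(\phi)<\infty$, I would use Lemma~\ref{lem:charabelclassred} to realize the Reidemeister classes as cosets of $H:=\{e\}_\phi$, and Theorem~\ref{teo:reidandcokerabel} to see that $[G:H]=R(\phi)<\infty$. Since $\phi$ is an automorphism and $H$ is the image of $x\mapsto x\phi(x^{-1})$, the subgroup $H$ is $\phi$-invariant, so $\phi$ descends to an automorphism $\phi_W$ of the finite quotient $W:=G/H$. Each characteristic function of a Reidemeister class is then the pullback along $F:G\to W$ of the characteristic function of a single point of $W$, so both conditions of Definition~\ref{dfn:Rperiodi} are verified.

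For the inductive step ($n\ge 2$) the plan is to set $H:=G'=[G,G]$. Its derived series is $H^{(i)}=G^{(i+1)}$ with cyclic factors inherited from the derived series of $G$, so $H$ is polycyclic of derived length $n-1$ and has RP by the induction hypothesis. The quotient $G/H=G/G'$ is cyclic by Definition~\ref{dfn:polycgroup}, hence abelian (so FC) and finitely generated. Because $H=[G,G]$ is characteristic, any automorphism $\phi$ of $G$ preserves it, producing an extension as in~(\ref{eq:extens}) that respects $\phi$. Theorem~\ref{teo:RPFCexten} then yields RP$(\phi)$ for every automorphism $\phi$ of $G$ with $R(\phi)<\infty$, establishing that $G$ has RP.

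All the real work is already packaged in Theorem~\ref{teo:RPFCexten}, so I expect no serious obstacle beyond cleanly setting up the base case; there the only checks are that $H=\{e\}_\phi$ in cyclic $G$ is simultaneously $\phi$-invariant and of finite index, and that its cosets are exactly the Reidemeister classes. Both are immediate from the abelian results cited above, so the induction proceeds essentially as bookkeeping along the derived series.
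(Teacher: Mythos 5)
Your proof is correct and follows essentially the same route as the paper: induction along the derived series, with $H=G'$ characteristic and RP by induction, $G/G'$ cyclic (hence FC and finitely generated), and Theorem~\ref{teo:RPFCexten} supplying the inductive step; the paper's base case is the one-line claim that abelian groups are RP via Lemma~\ref{lem:charabelclassred}, which you merely spell out in more detail using Theorem~\ref{teo:reidandcokerabel} and the $\phi$-invariance of $\{e\}_\phi$.
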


\begin{proof}
By Lemma \ref{lem:charabelclassred} any commutative group is RP.
Any extension with $H$ being the commutator subgroup $G'$
of $G$ respects any
automorphism $\phi$ of $G$, because $G'$ is evidently characteristic.
The factor group is abelian, in particular FC.

Since any polycyclic group is a result of finitely many such extensions
with finitely generated (cyclic) factor groups,
starting from Abelian group,
applying inductively Theorem \ref{teo:RPFCexten}
we obtain the result.
\end{proof}

\begin{teo}\label{teo:nilpotRP}
Any finitely generated nilpotent group is {\rm RP}.
\end{teo}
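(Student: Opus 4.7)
The natural strategy is to deduce this from the polycyclic case already handled in Theorem \ref{teo:polyciclRP}. It suffices to invoke the classical fact that every finitely generated nilpotent group is polycyclic: the lower central series $G = \gamma_1 \supseteq \gamma_2 \supseteq \dots \supseteq \gamma_{c+1} = \{e\}$ has finitely generated abelian factors $\gamma_i/\gamma_{i+1}$, each of which admits a finite cyclic subnormal refinement. Splicing these refinements into the lower central series produces a subnormal series for $G$ with cyclic factors, so $G$ is polycyclic in the sense of Definition \ref{dfn:polycgroup}, and Theorem \ref{teo:polyciclRP} applies directly.

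Alternatively, and more in the spirit of the proof of Theorem \ref{teo:polyciclRP}, one can induct on the derived length. The base case is $G$ abelian: the subgroup $\{e\}_\phi = \{x\phi(x^{-1}) \mid x \in G\}$ is $\phi$-invariant, of index $R(\phi) < \infty$ by Theorem \ref{teo:reidandcokerabel}, and every Reidemeister class is a coset of it by Lemma \ref{lem:charabelclassred}; hence the finite quotient $W = G/\{e\}_\phi$ together with the induced map plays the role required by Definition \ref{dfn:Rperiodi}. For the inductive step I would apply Theorem \ref{teo:RPFCexten} to the $\phi$-respecting extension $G' \hookrightarrow G \twoheadrightarrow G/G'$: the commutator subgroup $G'$ is finitely generated nilpotent of strictly smaller derived length (hence RP by the inductive hypothesis), while $G/G'$ is finitely generated abelian and therefore FC.

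The only non-trivial ingredient in either route is that every subgroup, and in particular $G'$, of a finitely generated nilpotent group is itself finitely generated; this is the classical Noetherian property of finitely generated nilpotent groups (equivalently, that they are polycyclic), and it is precisely what makes both arguments go through. Beyond this there is no real obstacle, since all of the non-commutative bookkeeping --- tracking Reidemeister classes across an extension with an FC quotient --- has already been absorbed into Theorem \ref{teo:RPFCexten}. Consequently the main step is really the verification of a well-known structural fact about f.g.\ nilpotent groups, after which the theorem is an essentially immediate corollary of Theorem \ref{teo:polyciclRP}.
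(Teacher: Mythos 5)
Your proposal is correct and takes essentially the same route as the paper, whose entire proof is the one-line observation that finitely generated nilpotent groups are supersolvable, hence polycyclic (citing Robinson), so that Theorem~\ref{teo:polyciclRP} applies. Your first argument merely replaces that citation with a direct verification of polycyclicity via the lower central series, and your alternative induction on derived length via Theorem~\ref{teo:RPFCexten} just unwinds the paper's own proof of Theorem~\ref{teo:polyciclRP}.
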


\begin{proof}
These groups are supersolvable, hence, polycyclic \cite[5.4.6, 5.4.12]{Robinson}.
\end{proof}

\begin{teo}\label{teo:polynomRP}
Any finitely generated group of polynomial growth is {\rm RP}.
\end{teo}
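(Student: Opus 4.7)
The plan is to reduce the statement to the case of finitely generated nilpotent groups, which is already covered by Theorem \ref{teo:nilpotRP}, and then combine this with the extension result Theorem \ref{teo:RPFCexten}. The key external input will be Gromov's celebrated theorem that a finitely generated group of polynomial growth is virtually nilpotent, i.e.\ contains a nilpotent subgroup of finite index.

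First, given a finitely generated group $G$ of polynomial growth and an automorphism $\phi$ with $R(\phi)<\infty$, I would invoke Gromov's theorem to produce a nilpotent subgroup $N\ss G$ of finite index. Since $G$ is finitely generated and $[G:N]<\infty$, Lemma \ref{lem:normalcharfiniteind} yields a characteristic subgroup $H\ss N$ of finite index in $G$. Being characteristic, $H$ is automatically $\phi$-invariant, so the quotient $\ov\phi:G/H\to G/H$ is well defined and we obtain an extension of the form \eqref{eq:extens} respecting $\phi$.

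Next I would verify the two hypotheses of Theorem \ref{teo:RPFCexten} for this extension. The subgroup $H$ is a subgroup of the nilpotent group $N$, hence nilpotent; by Theorem \ref{teo:fingeneinind} it is finitely generated, since it has finite index in the finitely generated group $G$. Therefore $H$ is a finitely generated nilpotent group, and Theorem \ref{teo:nilpotRP} tells us $H$ has the property {\rm RP}. On the other hand, $G/H$ is a finite group, hence trivially a finitely generated {\rm FC}-group. Theorem \ref{teo:RPFCexten} now applies directly and gives $G$ the property {\rm RP}$(\phi)$. Since $\phi$ was an arbitrary automorphism of $G$ with $R(\phi)<\infty$, this yields {\rm RP} for $G$.

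The main obstacle is really not in the argument itself, which is a clean assembly of earlier lemmas, but in the invocation of Gromov's polynomial growth theorem: everything in the proof rests on extracting a finite-index nilpotent subgroup, and without that deep result there is no direct route from ``polynomial growth'' to the inductive machinery built up in Section \ref{sec:polyc}. Once Gromov's theorem is taken as a black box, the only mild care needed is to pass from a finite-index nilpotent subgroup (which need not be characteristic or $\phi$-invariant a priori) to a characteristic finite-index one via Lemma \ref{lem:normalcharfiniteind}, and to observe that characteristic subgroups of nilpotent groups are nilpotent and that subgroups of finite index in finitely generated groups remain finitely generated. These are precisely the facts already recorded earlier in the section, so the proof is short.
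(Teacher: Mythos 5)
Your proof is correct and follows essentially the same route as the paper's: both invoke Gromov's theorem, pass to a characteristic finite-index subgroup via Lemma \ref{lem:normalcharfiniteind} (noting it is finitely generated by Theorem \ref{teo:fingeneinind} and nilpotent as a subgroup of a nilpotent group), apply Theorem \ref{teo:nilpotRP} to that subgroup, observe the finite quotient is FC, and conclude with Theorem \ref{teo:RPFCexten}. No gaps; this matches the paper's argument step for step.
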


\begin{proof}
By \cite{GromovPolyIHES} a  finitely generated group
of polynomial growth is just
as finite extension of a f.g. nilpotent group $H$.
The subgroup $H$ can be supposed to be characteristic, i.e.
$\phi(H)=H$ for any automorphism $\phi:G\to G$.
Indeed, let $H'\ss G$ be a nilpotent subgroup of index $j$.
Let $H$ be the subgroup from Lemma \ref{lem:normalcharfiniteind}.
By Theorem \ref{teo:fingeneinind} it is finitely generated.
Also, it is nilpotent as
a subgroup of nilpotent group (see \cite[\S\ 26]{Kurosh}).

Since a finite group is a particular case of FC group
and f.g. nilpotent group has RP by Theorem \ref{teo:nilpotRP},
we can apply Theorem \ref{teo:RPFCexten} to complete the proof.
\end{proof}

The proof of the following twisted Burnside
theorem can be extracted from
Theorem \ref{teo:twburnsforRP}, but we formulate it separately
due to importance of the classes under consideration.

\begin{teo}\label{teo:TwistedBurnsPolyn}
The Reidemeister number of any automorphism $\phi$ of a f.g. group
of polynomial growth or polycyclic group is equal to the
number of finite-dimensional
fixed points of $\wh\phi$ on the unitary dual of this group if $R(\phi)$
is finite.
\end{teo}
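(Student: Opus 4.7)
The plan is to bootstrap from the finite-group case of Burnside's theorem using the \textsc{RP} property that has just been established for both classes of groups by Theorems \ref{teo:polyciclRP} and \ref{teo:polynomRP}. Fix an automorphism $\phi$ with $R(\phi)<\infty$. By \textsc{RP}$(\phi)$ there exists an epimorphism $F\colon G\to W$ onto a finite group and an automorphism $\phi_W\colon W\to W$ with $F\circ\phi=\phi_W\circ F$, such that $F$ induces a bijection between $\phi$-conjugacy classes of $G$ and $\phi_W$-conjugacy classes of $W$ (Remark \ref{rk:bijecofclass}). In particular $R(\phi)=R(\phi_W)$, and every characteristic function of a Reidemeister class of $\phi$ is a pullback $F^*\chi$ for some $\chi\in C(W)$; since $C(W)\subset B(W)$ and pullback by a homomorphism sends the Fourier--Stieltjes algebra into itself, all these characteristic functions lie in $B(G)$, so the dimension of the space of twisted-invariant functionals on $C^*(G)$ is exactly $R(\phi)$.

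Next I would apply the compact/finite case of the twisted Burnside theorem (Section \ref{sec:compcase}) to $W$ to obtain $R(\phi_W)=\#\Fix(\wh\phi_W)$, noting that every irreducible representation of the finite group $W$ is automatically finite-dimensional. Pullback then provides an injective map
\[
\Fix(\wh\phi_W)\longrightarrow \Fix(\wh\phi)\cap \wh G_{\mathrm{fin}},\qquad \rho_W\longmapsto \rho_W\circ F.
\]
Each $\rho_W\circ F$ is finite-dimensional; it is irreducible because surjectivity of $F$ guarantees $(\rho_W\circ F)(G)''=\rho_W(W)''=\End(V_{\rho_W})$; and it is $\wh\phi$-fixed because $(\rho_W\circ F)\circ\phi=\rho_W\circ\phi_W\circ F\simeq \rho_W\circ F$. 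Injectivity is immediate from surjectivity of $F$. This yields $\#\Fix(\wh\phi_W)\le S_f(\phi)$.

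For the reverse inequality I would redo the separation argument of Lemma \ref{lem:RiGF} in the present setting. To each finite-dimensional $\wh\phi$-fixed irreducible representation $\rho$ of $G$, Schur's lemma provides a one-dimensional space of intertwiners $\rho\to \rho\circ\phi$, and picking $S_\rho$ in it gives the non-zero twisted-invariant functional $\f^*_\rho(g)=\Tr(S_\rho\,\rho(g))$ on $L^1(G)\subset C^*(G)$. Given pairwise inequivalent fixed points $\rho_1,\ldots,\rho_s$, one produces positive elements $a'_i\in L^1(G)$ with $\|\rho_i(a'_i)\|\ge 1$ and $\|\rho_j(a'_i)\|<\varepsilon$ for $j\ne i$, then twists by the intertwiner data $b_i^*$ with $\rho_i(b_i)=S_i^*$, exactly as in \eqref{eq:ozendiag}--\eqref{eq:ozenvnediag}, to obtain a diagonally dominant matrix $\F=\|\f^*_j(a'_ib_i^*)\|$. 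Linear independence of the $\f^*_i$ follows and yields $s\le R(\phi)$. Combining both inequalities closes the loop
\[
R(\phi)=R(\phi_W)=\#\Fix(\wh\phi_W)\le S_f(\phi)\le R(\phi).
\]

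The main obstacle is the separation step: in Lemma \ref{lem:RiGF} it relied on the $T_1$ property of $\wh G$ that is guaranteed by Theorem \ref{teo:proptypeI} for type I groups, and our groups need not be of type I (the integer Heisenberg group is already not type I). To cite \cite[Lemma 3.3.3]{DixmierEng} I only need Hausdorff separation of the finitely many finite-dimensional points $\rho_i$ inside $\Prim C^*(G)$, which is the content of the remark in the introduction that finite-dimensional irreducible representations are maximal ideals and are Hausdorff-separated in the Glimm spectrum. Verifying this maximality and separation carefully for almost polycyclic groups, and checking that Dixmier's separation lemma is available in this generality, constitutes the technical heart of the proof; the rest is a straightforward amalgam of the finite-group Burnside theorem and the type I argument carried out through the finite quotient $W$ supplied by \textsc{RP}.
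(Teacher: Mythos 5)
Your overall strategy (reduce via the \textsc{RP} theorems \ref{teo:polyciclRP} and \ref{teo:polynomRP}, then exploit the finite quotient $F\colon G\to W$) starts the same way as the paper, which deduces the theorem from Theorem \ref{teo:twburnsforRP}; but from there your route genuinely diverges. The paper never passes to $\wh W$: it shows that every $\phi$-class function is a linear combination of twisted-invariant coefficients of points of $\Fix\wh\phi_f$ (this is where \textsc{RP} enters), that each such fixed point carries exactly one twisted-invariant functional up to scaling (uniqueness of the intertwiner), and that coefficients attached to inequivalent fixed points are linearly independent; comparing dimensions of the space of $\phi$-class functions then gives $R(\phi)=S_f(\phi)$ in one stroke. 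Your chain $R(\phi)=R(\phi_W)=\#\Fix(\wh\phi_W)\le S_f(\phi)$, using Remark \ref{rk:bijecofclass}, the finite-group Burnside theorem on $W$, and pullback along $F$ (your irreducibility, fixedness and injectivity checks are correct), is a clean and valid alternative for that direction.

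The gap is exactly where you placed it, the inequality $S_f(\phi)\le R(\phi)$, but you misdiagnose what is needed to close it, and as written you defer rather than prove the decisive step. No verification specific to almost polycyclic groups is required: for \emph{any} $C^*$-algebra $A$, the kernel of a finite-dimensional irreducible representation $\rho$ is a maximal closed two-sided ideal (since $A/\Ker\rho\cong M_n(\C)$ is simple), inequivalent finite-dimensional irreducibles have distinct kernels, and hence every finite-dimensional point of the spectrum is closed; consequently \cite[Lemma 3.3.3]{DixmierEng} applies verbatim and your Lemma \ref{lem:RiGF}-style matrix argument goes through for any countable discrete group. Alternatively --- and this is how the paper closes the same step, avoiding $C^*$-topology altogether --- the functionals $g\mapsto\Tr(S_{\rho_i}\rho_i(g))$ are coefficients of pairwise inequivalent finite-dimensional irreducible representations, and such coefficients are linearly independent as functions on $G$ by the Peter--Weyl theorem applied to the universal (Bohr) compact group $\Sigma$ of $G$, using $\wh G_f=\wh\Sigma$. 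Since the space of twisted-invariant functionals on $L^1(G)$ has dimension $R(\phi)$ when $R(\phi)<\infty$, linear independence immediately yields $S_f(\phi)\le R(\phi)$. With either patch your proof is complete; without one, the "technical heart" you acknowledge remains unproved.
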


\begin{teo}\label{teo:RPalmostpolyc}
Any almost polycyclic group, i.e. an extension of
a polycyclic group with a finite factor group, is {\rm RP}.
\end{teo}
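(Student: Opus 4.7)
The plan is to reduce the statement to a direct application of Theorem~\ref{teo:RPFCexten}, using Theorem~\ref{teo:polyciclRP} as the base case. Let $G$ be almost polycyclic, so that $G$ contains a polycyclic subgroup $H'$ of finite index. Since polycyclic groups are finitely generated and a finite extension of a finitely generated group is finitely generated, $G$ itself is finitely generated; this is the hypothesis we need in order to use Lemma~\ref{lem:normalcharfiniteind} and Theorem~\ref{teo:fingeneinind}. Fix an arbitrary automorphism $\phi:G\to G$ with $R(\phi)<\infty$; we will exhibit RP$(\phi)$.

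First I would replace $H'$ by a better subgroup: applying Lemma~\ref{lem:normalcharfiniteind} to $H'\subset G$ yields a characteristic subgroup $H\subset H'$ of finite index in $G$. Being characteristic, $H$ is $\phi$-invariant, so the extension
$$
0\to H\to G\to G/H\to 0
$$
respects $\phi$ in the sense of diagram~(\ref{eq:extens}). By Theorem~\ref{teo:fingeneinind}, $H$ is finitely generated, and as a subgroup of the polycyclic group $H'$ it is itself polycyclic (a subgroup of a polycyclic group has a subnormal series whose factors are cyclic, inherited by intersection from the series of $H'$). Therefore, by Theorem~\ref{teo:polyciclRP}, $H$ has property~RP; in particular RP holds for the restriction $\phi|_H$ and for each $\t_{g_i}\circ\phi|_H$ needed inside the proof of Theorem~\ref{teo:RPFCexten}.

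Next, the quotient $G/H$ is finite, hence trivially finitely generated and an FC-group (all conjugacy classes are finite because the whole group is finite). Thus both hypotheses of Theorem~\ref{teo:RPFCexten} are satisfied: $H$ has RP and $G/H$ is finitely generated FC. Applying that theorem to the above extension and to our chosen $\phi$ gives RP$(\phi)$. Since $\phi$ was arbitrary, $G$ has property~RP, as required.

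The only nontrivial point in this plan is the step that produces a \emph{characteristic} polycyclic subgroup of finite index from any polycyclic subgroup $H'$ of finite index; here one must simultaneously preserve polycyclicity when intersecting the finitely many subgroups of the same index as $H'$, and this is automatic because polycyclicity is closed under taking subgroups. The remaining ingredients are essentially bookkeeping on top of Theorems~\ref{teo:polyciclRP} and~\ref{teo:RPFCexten}.
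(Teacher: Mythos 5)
Your proof is correct and follows essentially the same route as the paper: the paper's own proof is a one-line reduction to the argument of Theorem~\ref{teo:polynomRP} (pass to a characteristic finite-index subgroup via Lemma~\ref{lem:normalcharfiniteind}, note it stays polycyclic since subgroups of polycyclic groups are polycyclic, then apply Theorem~\ref{teo:polyciclRP} and Theorem~\ref{teo:RPFCexten} with the finite quotient as the FC factor), which is exactly your argument. Your additional remarks --- that $G$ is finitely generated because it is a finite extension of a finitely generated group, and that $H$ being characteristic makes the extension respect an arbitrary $\phi$ --- are details the paper leaves implicit, and they are correct.
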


\begin{proof}
The proof repeats almost
literally the proof of Theorem \ref{teo:polynomRP}.
One has to use the fact that a subgroup of a polycyclic group
is polycyclic \cite[p.~147]{Robinson}.
\end{proof}


\begin{dfn}
Denote by $\wh G_f$ the subset of the unitary dual $\wh G$ related to
finite-di\-men\-si\-on\-al representations.
\end{dfn}

\begin{teo}[Twisted Burnside Theorem]\label{teo:twburnsforRP}
Let $G$ be an {\rm RP} group and $\phi$ its automorphism. Denote by $S_f(\phi)$
the number of fixed points of $\wh\phi_f$ on $\wh G_f$. Then
$$
R(\phi)=S_f(\phi),
$$
if the Reidemeister number $R(\phi)$ is finite.
\end{teo}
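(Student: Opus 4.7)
The plan is to reduce the problem, via the RP property, to the classical twisted Burnside theorem for finite groups, and then sandwich $S_f(\phi)$ between $R(\phi_W)$ and $R(\phi)$.

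First, using that $G$ has RP, choose a finite group $W$, an epimorphism $F : G \to W$ and an automorphism $\phi_W : W \to W$ as in Definition \ref{dfn:Rperiodi}. By Remark \ref{rk:bijecofclass} (applied simultaneously to all $\phi$-classes, which can be arranged by taking the common characteristic subgroup as in Lemma \ref{lem:normalcharfiniteind} and Remark \ref{rk:odnodliavseh}), $F$ induces a bijection between $\phi$-conjugacy classes of $G$ and $\phi_W$-conjugacy classes of $W$, so $R(\phi) = R(\phi_W)$. Since $W$ is finite (hence of type I), Theorem \ref{teo:mainth1} gives $R(\phi_W) = \#\Fix(\wh{\phi_W})$.

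Next I would produce the inequality $\#\Fix(\wh{\phi_W}) \le S_f(\phi)$ by pullback along $F$. If $\rho \in \wh W$ is $\wh{\phi_W}$-fixed, then $\rho \circ F \in \wh G_f$ (irreducibility is preserved because $F$ is surjective, and it is finite-dimensional since $W$ is finite) and the commuting diagram gives
\[
(\rho \circ F) \circ \phi \;=\; \rho \circ \phi_W \circ F \;\sim\; \rho \circ F,
\]
so $\rho \circ F \in \Fix(\wh\phi_f)$. Distinct $\rho$ yield distinct pullbacks because $F$ is onto, so this is an injection $\Fix(\wh{\phi_W}) \hookrightarrow \Fix(\wh\phi_f)$.

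For the reverse inequality $S_f(\phi) \le R(\phi)$ I would imitate the argument of Lemma \ref{lem:RiGF}, which is the technical heart. To each $\rho \in \Fix(\wh\phi_f)$ associate the twisted-invariant functional $\f_\rho(g) = \Tr(S_\rho \rho(g))$, where $S_\rho$ is the (up-to-scalar unique by Schur) intertwiner between $\rho$ and $\rho \circ \phi$; each such $\f_\rho$ is a twisted-central function on $G$, so lies in the $R(\phi)$-dimensional space spanned by characteristic functions of Reidemeister classes. It then suffices to verify that the $\f_{\rho_i}$ coming from distinct finite-dimensional fixed points are linearly independent. This is achieved, exactly as in Lemma \ref{lem:RiGF}, by separating finitely many inequivalent finite-dimensional irreducible representations: given $\rho_1,\dots,\rho_s$, one finds $a_i \in \C[G]$ with $\|\rho_j(a_i)\|$ very small for $j \ne i$ and with $\f^*_i(a_i)$ bounded below, so that the matrix $(\f_{\rho_j}^*(a_i))$ is diagonally dominant and hence non-degenerate. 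The required separation of finite-dimensional irreducible unitary representations of a discrete group is standard (they separate through their primitive ideals, which are Hausdorff-separated inside $\wh G_f$; cf. the discussion after Theorem \ref{teo:polyc}), and this is the one place where the argument is non-routine. Combining the two inequalities yields $S_f(\phi) = \#\Fix(\wh{\phi_W}) = R(\phi_W) = R(\phi)$.

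The main obstacle, as indicated, is the linear independence / separation step for $s$ inequivalent finite-dimensional irreducibles of a not-necessarily-type-I group. Once this is in hand, the rest of the proof is a direct assembly of Lemma \ref{lem:periodihomom}, the RP hypothesis, and Theorem \ref{teo:mainth1} applied to the finite quotient $W$.
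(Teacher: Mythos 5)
Your proposal is correct, but it reaches the key inequality $R(\phi)\le S_f(\phi)$ by a genuinely different route than the paper. The paper never invokes the classical Burnside theorem for the finite quotient $W$: instead, using RP it writes each characteristic function of a Reidemeister class as a combination $f=\sum_i f_i\circ\rho_i$ of coefficients of the finite-dimensional representations $\rho_i=\pi_i\circ F$ (with $\pi_i\in\wh W$), then uses linear independence of coefficients of inequivalent finite-dimensional irreducibles to force each component $f_i$ to be twisted-invariant, and finally writes $f_i=\Tr(b_i\,\rho_i(\cdot))$ with $b_i$ intertwining $\rho_i$ and $\rho_i\circ\phi$, so that every nonzero component comes from a point of $\Fix(\wh\phi_f)$; this yields $R(\phi)\le S_f(\phi)$ as a statement about the span of class functions. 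Your replacement --- $R(\phi)=R(\phi_W)$ via Remark \ref{rk:bijecofclass} (with the common $W$ arranged through Lemma \ref{lem:normalcharfiniteind} and Remark \ref{rk:odnodliavseh}), then $R(\phi_W)=\#\Fix(\wh\phi_W)$ by Theorem \ref{teo:mainth1} applied to the finite group $W$, then the pullback injection $\Fix(\wh\phi_W)\hookrightarrow\Fix(\wh\phi_f)$, $\rho\mapsto\rho\circ F$ --- is valid and more modular, and it has the added virtue of exhibiting $R(\phi)$ explicit fixed points as pullbacks from $W$. For the reverse inequality $S_f(\phi)\le R(\phi)$ the two arguments agree in substance (twisted-invariant coefficients of distinct fixed points lie in the $R(\phi)$-dimensional space of twisted-invariant functions and are linearly independent), but differ in how independence is justified: the paper uses the universal compact group $\Sigma$ with $\wh G_f=\wh\Sigma$ and the Peter-Weyl theorem, which works uniformly for any discrete group with no separation machinery; your imitation of Lemma \ref{lem:RiGF} also works outside the type I setting, though the loose phrase about primitive ideals being ``Hausdorff-separated'' deserves to be replaced by the precise standard fact --- kernels of inequivalent finite-dimensional irreducibles are distinct maximal ideals of finite codimension, so $C^*(G)$ (indeed already $\C[G]$, by Jacobson density) surjects onto $\bigoplus_i\rho_i(C^*(G))$, which produces the required elements $a_i$ and in fact gives linear independence outright, with no norm estimates or diagonal-dominance needed. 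In short: the paper's route buys a structural decomposition of all $\phi$-class functions through fixed points; yours buys a reduction of the counting to the finite case plus an explicit realization of the fixed points.
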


\begin{proof}
Let us start from the following observation.
Let $\Sigma$ be the universal compact group associated with $G$ and
$\a:G\to \Sigma$ the canonical morphism
(see, e.g. \cite[Sect. 16.1]{DixmierEng}).
Then $\wh G_f=\wh \Sigma$  \cite[16.1.3]{DixmierEng}.
The coefficients of (finite-dimensional) non-equivalent irreducible
representations of $\Sigma$ are linear independent by Peter-Weyl theorem
as functions on $\Sigma$. Hence the coefficients of
finite-dimensional non-equivalent irreducible
representations of $G$  as functions on $G$ are linearly independent as well.

It is sufficient to verify the following three statements:

1) If $R(\phi)<\infty$, than each $\phi$-class function is a finite
linear combination of twisted-invariant functionals being coefficients
of points of $\Fix\wh\phi_f$.

2) If $\rho\in\Fix\wh\phi_f$, there exists one and only one
(up to scaling)
twisted invariant functional on $\rho(C^*(G))$ (this is a finite
full matrix algebra).

3) For different $\rho$ the corresponding $\phi$-class functions are
linearly independent. This follows from the remark at the beginning of
the proof.

Let us remark that the property RP implies in particular that
$\phi$-central functions (for $\phi$ with $R(\phi)<\infty$) are
functionals on $C^*(G)$, not only $L^1(G)$, i.e. are in the
Fourier-Stieltijes algebra $B(G)$.

The statement 1) follows from the RP property. Indeed,
this $\phi$-class function $f$ is a linear combination of functionals
coming from some finite collection $\{\r_i\}$ of elements
of $\wh G_f$ (these
representations $\r_1,\dots,\r_s$ are in fact representations of
the form $\pi_i\circ F$, where $\pi_i$ are irreducible representations
of the finite group $W$ and $F:G\to W$, as in the definition of RP).
So,
$$
f=\sum_{i=1}^s f_i\circ \r_i,\quad \r_i:G\to \End (V_i),
\quad f_i:\End (V_i)\to \C, \quad \r_i\ne \r_j,\: (i\ne j).
$$
For any $g, \til g \in G$ one has
$$
\sum_{i=1}^s f_i (\r_i(\til g ))=f(\til g )=
f(g\til g \phi(g^{-1}))=\sum_{i=1}^s f_i (\r_i(g\til g \phi(g^{-1}))).
$$
By the observation at the beginning of the proof concerning
linear independence,
$$
f_i (\r_i(\til g ))=f_i (\r_i(g\til g \phi(g^{-1}))),\qquad i=1,\dots,s,
$$
i.e. $f_i$ are twisted-invariant.  For any $\r\in \wh G_f$,
$\r:G\to \End (V)$, any
functional $\w: \End (V)\to \C$
has the form $a\mapsto \Tr(ba)$ for some fixed $b\in \End (V)$.
Twisted invariance implies twisted invariance of $b$ (evident details can
be found in \cite[Sect. 3]{FelTro}). Hence, $b$ is intertwining between
$\r$ and $\r\circ\phi$ and $\r\in \Fix(\wh\phi_f)$. The uniqueness of
intertwining operator (up to scaling) implies 2).
\end{proof}

\begin{rk}
We were able to prove only one statement of the theorem
in the terms of
$\Sigma$ because of difficulties with an extension of $\phi$ to $\Sigma$.
\end{rk}

Now let us present some counterexamples to this statement
for pathological (monster) discrete groups.
Suppose, an infinite discrete group
$G$ has a finite number of conjugacy classes.
Such examples can be found in \cite{serrtrees} (HNN-group),
\cite[p.~471]{olsh} (Ivanov group), and \cite{Osin} (Osin group).
Then evidently, the characteristic function of unity element is not
almost-periodic and the argument above is not valid. Moreover, let us
show, that these groups give rise counterexamples to above theorem.

\begin{ex}\label{ex:osingroup}
For the Osin group the Reidemeister number $R(\Id)=2$,
while there is only trivial (1-dimensional) finite-dimensional
representation.
Indeed, Osin group is
an infinite finitely generated group $G$ with exactly two conjugacy classes.
All nontrivial elements of this group $G$ are conjugate. So, the group $G$
is simple, i.e. $G$ has no nontrivial normal subgroup.
This implies that group $G$ is not residually finite
(by definition of residually finite group). Hence,
it is not linear (by Mal'cev theorem \cite{malcev}, \cite[15.1.6]{Robinson})
and has no finite-dimensional irreducible unitary
representations with trivial kernel. Hence, by simplicity of $G$, it has no
finite-dimensional
irreducible unitary representation with nontrivial kernel, except of the
trivial one.

Let us remark that Osin group is non-amenable, contains the free
group in two generators $F_2$,
and has exponential growth.
\end{ex}

\begin{ex}\label{ex;ivanovgroup}
For large enough prime numbers $p$,
the first examples of finitely generated infinite periodic groups
with exactly $p$ conjugacy classes were constructed
by Ivanov as limits of hyperbolic groups (although hyperbolicity was not
used explicitly) (see \cite[Theorem 41.2]{olsh}).
Ivanov group $G$ is infinite periodic
2-generator  group, in contrast to the Osin group, which is torsion free.
The Ivanov group $G$ is also a simple group.
The proof (kindly explained to us by M. Sapir) is the following.
Denote by $a$ and $b$ the generators of $G$ described in
\cite[Theorem 41.2]{olsh}.
In the proof of Theorem 41.2 on  \cite{olsh}
it was shown that each of elements of $G$
is conjugate in $G$ to a power of generator $a$ of order $s$.
Let us consider any normal subgroup $N$ of $G$.
Suppose
$\gamma \in N$. Then $\gamma=g a^sg^{-1}$ for some $g\in G$ and some $s$.
Hence,
$a^s=g^{-1} \gamma g \in N$ and from  periodicity of $a$, it follows that also
$ a\in N$
as well as $ a^k \in N$  for any $k$, because $p$ is prime.
Then any element $h$ of $G$ also belongs to $N$
being of the form $h=\til h a^k (\til h)^{-1}$, for  some $k$, i.e., $N=G$.
Thus, the group $G$ is simple. The discussion can be completed
in the same way as in the case of Osin group.
\end{ex}

\begin{ex}
In paper \cite{HNN}, Theorem III and its corollary,
G.~Higman, B.~H.~Neumann, and H.~Neumann
proved that any locally infinite countable group $G$
can be embedded into a countable group $G^*$ in which all
elements except the unit element are conjugate to each other
(see also \cite{serrtrees}).
The discussion above related Osin group remains valid for $G^*$
groups.
\end{ex}

Let us remark that almost polycyclic group are residually finite
(see e.g. \cite[5.4.17]{Robinson})
while the groups from these counterexamples are not residually finite,
as it is clear by definition. That is why we would like to
complete this section with the following question.

\noindent{\bf Question.}
Suppose $G$ is a residually finite group and $\phi$ is its endomorphism
with finite $R(\phi)$. Does $R(\phi)$ equal $S_f(\phi)$?

The results of the present section are generalized to the case
of coincidences in \cite{bitwisted}.

\section{Baumslag-Solitar groups $B(1,n)$}\label{sec:BaumSolit}

In this section based on (a part of) \cite{FelGon} it is proved
that any injective endomorphism of a Baumslag-Solitar group
$B(1,n)$ has infinite Reidemeister number. In \cite{FelGon} the
similar result for automorphisms of $B(m,n)$ is obtained as well.

Let $B(1,n)=\langle a,b: a^{-1}ba=b^n, n>1 \rangle $ be
the Baumslag-Solitar groups. These groups (see \cite{fa-mo1})
are finitely-generated solvable groups (in particular amenable) which are not
virtually nilpotent. These  groups have
exponential growth \cite{harpe}  and  they are not Gromov hyperbolic.
Furthermore, these groups are  torsion  free and metabelian (an extension of
an Abelian group by an Abelian). More precisely one has

\begin{prop}\label{prop:felgon3.0}
$
B(1,n)\cong {\Z}[1/n]\rtimes_{\theta} {\Z},
$
where the
action of ${\Z}$ on ${\Z}[1/n]$ is given by $\theta(1)(x)=x/n$.
\end{prop}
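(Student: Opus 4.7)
The plan is to construct mutually inverse homomorphisms between $B(1,n)$ and $G := \Z[1/n] \rtimes_\theta \Z$. First I would define $\Phi : B(1,n) \to G$ on generators by $\Phi(a) = (0,1)$ and $\Phi(b) = (1,0)$. To see that this extends to a well-defined homomorphism via von Dyck's theorem, I would verify that the defining relation $a^{-1}ba = b^n$ holds in the image: a direct computation using $\theta(-1)(x) = nx$ gives
$$(0,-1)(1,0)(0,1) \;=\; (n,-1)(0,1) \;=\; (n,0) \;=\; (1,0)^n.$$
Surjectivity of $\Phi$ then follows from the identity $\Phi(a^k b a^{-k}) = (n^{-k},0)$ valid for all $k \geq 0$, since the elements $(n^{-k},0)$ together with $(0,1)$ already generate $G$.

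To obtain an inverse, I would define $\Psi : G \to B(1,n)$ by
$$\Psi(m/n^j,\, k) \;:=\; a^{j}\, b^{m}\, a^{k-j},$$
where a given $x \in \Z[1/n]$ is written as $x = m/n^j$ with $j \geq 0$. The main technical point is to check that this is independent of the chosen representation of $x$: since $m/n^j = (mn)/n^{j+1}$, one must show $a^{j} b^{m} a^{-j} = a^{j+1} b^{mn} a^{-(j+1)}$, which reduces to the identity $a^{-1} b^m a = b^{mn}$; this is obtained by iterating the defining relation $a^{-1}ba = b^n$ exactly $m$ times. Given well-definedness, additivity on $\Z[1/n]$ is verified by bringing two arguments to a common denominator $n^j$, and compatibility with the semidirect product multiplication rule $(x_1,k_1)(x_2,k_2) = (x_1 + \theta(k_1)(x_2),\, k_1+k_2)$ follows by a similar bookkeeping argument using the same relation.

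Finally one checks that $\Psi \circ \Phi$ and $\Phi \circ \Psi$ act as the identity on the respective generators, hence are the identity, so $\Phi$ is an isomorphism. The main obstacle is the careful verification of well-definedness and of the homomorphism property of $\Psi$, owing to the need to repeatedly change representatives $m/n^j$ in order to compare two elements of $\Z[1/n]$. A more conceptual alternative would be to identify $B(1,n)$ with the ascending HNN extension of $\Z = \langle b \rangle$ associated to the monomorphism $b \mapsto b^n$, observe that $\Z[1/n]$ is the direct limit of the system $\Z \ola{n} \Z \ola{n} \Z \ola{n} \cdots$ on which the stable letter $a$ acts by the shift $\theta$, and then conclude injectivity via Britton's lemma; but the direct construction above is more self-contained.
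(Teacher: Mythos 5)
Your proposal is correct, and its first half coincides with the paper's proof: the paper defines the same map (called $\iota$ there) by $\iota(a)=(0,1)$, $\iota(b)=(1,0)$, verifies the defining relation by the same computation $(0,-1)*(1,0)*(0,1)=(n,0)=\iota(b^n)$, and gets surjectivity from the same formula $\iota(a^r b^s a^{-r})=(s/n^r,0)$. Where you genuinely diverge is injectivity. The paper never constructs an inverse map; instead it rewrites an arbitrary word of $B(1,n)$ into the normal form $a^{r_1}b^s a^{r_2}$ with $r_1\ge 0\ge r_2$, by pushing every $a$ to the left and every $a^{-1}$ to the right using $a^{-1}b^s a=b^{ns}$, and then evaluates $\iota$ on this normal form: if $\iota(w)=e$, the $\Z$-coordinate gives $r_1+r_2=0$, and then $s/n^{r_1}=0$ forces $s=0$, so $w=e$. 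You instead build an explicit two-sided inverse $\Psi(m/n^j,k)=a^j b^m a^{k-j}$, which shifts the burden to checking that $\Psi$ is well defined and multiplicative; both checks reduce, as you correctly identify, to the single identity $a^{-1}b^m a=b^{mn}$, and your bookkeeping does go through (for multiplicativity one conjugates both factors into a common level $a^J b^{\,\cdot}\, a^{-J}$ with $J\ge\max(j_1,\,j_2+k_1)$ before multiplying). The trade-off: the paper's kernel-triviality argument is shorter, since the normal-form rewriting is a one-line induction and no well-definedness question ever arises, whereas your route is heavier on verification but yields the inverse isomorphism explicitly and never needs a claim about rewriting words. Your suggested alternative via the ascending HNN extension and Britton's lemma is also valid and is the standard conceptual proof, but, as you say, it is less self-contained than either direct argument.
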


\begin{proof}
The map defined by   $\iota(a)=(0,1)$ and
$\iota (b)=(1,0)$ extends to a  unique homomorphism
$\iota:B(1,n) \to \Z[1/n]\rtimes \Z,$
because
$$
\iota(a^{-1}) * \iota(b) * \iota(a)=
(0,-1) * (1,0) * (0,1)=(0,-1) * (1,1)=(n,0)=\iota(b^n).
$$
One has
\begin{equation}\label{ur:felgondob}
  \iota(a^r b^s a^{-r})=(0, r) * (s, -r)= \left(\frac
  s{n^r},0\right).
\end{equation}

The map $\iota$ is clearly surjective. Let us show
that this homomorphism is injective.
Let us remark that the group relation implies
$a^{-1}b^{-1}a=b^{-n}$. Hence for any $s$ one has
$a^{-1} b^s a=b^{ns}$.
Thus we can move all $a^{-1}$ to the right (until they
annihilate with some $a$ or take the extreme right place)
and all $a$ to the left, with an appropriate changing of powers
of $b$. Hence
any word in $B(1,n)$
is equivalent to a word of the form
$w=a^{r_1}b^{s}a^{r_2}$, where $r_1 \ge 0$, $r_2 \le 0$.
Then
$\iota(w)=(m,r_1 + r_2)$ for some $m\in \Z[1/n]$.
Hence, if $r_1+r_2\ne 0$, then $\iota(w)\ne e$. Let now
$r_1+r_2=0$, then by (\ref{ur:felgondob})
if $\iota(w)=e$, then $s=0$ and $w=e$.
\end{proof}

Consider the homomorphism $|\ \ |_a: B(1,n) \longrightarrow \Z $ which
associates to each word $w\in B(1,n)$ the sum of the exponents of $a$ in
this word. Since this sum for the relation is zero,
this is a well defined map, which
is evidently surjective.

\begin{prop}\label{prop:felgon3.1}
We have a short exact sequence
$$
0 \longrightarrow K\longrightarrow B(1,n)
\stackrel{|\ \ |_a}\longrightarrow \Z \longrightarrow 1,
$$
where $K$ is the kernel of $ |\ \ |_a$.
Moreover, $B(1,n)$ equals a semidirect product $K \rtimes \Z$.
\end{prop}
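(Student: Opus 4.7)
The plan is to verify that $|\cdot|_a$ is a well-defined surjective homomorphism and then exhibit a splitting of the short exact sequence. The map is defined on the free group on $\{a,b\}$ by sending a word to the sum of the exponents of $a$, and to descend to $B(1,n)$ it suffices to check that the defining relator $a^{-1}ba b^{-n}$ is killed: indeed the $a$-exponent sum of $a^{-1}ba$ is $0$, as is that of $b^n$. Surjectivity is immediate since $|a|_a = 1$, and by construction $K = \ker(|\cdot|_a)$, giving the exact sequence.

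For the semidirect product claim, I would define the section $s:\Z \to B(1,n)$ by $s(k) = a^k$. This is a homomorphism and clearly satisfies $|s(k)|_a = k$, so the sequence splits. A standard lemma on split extensions then yields $B(1,n) \cong K \rtimes_\sigma \Z$, where $\sigma: \Z \to \Aut(K)$ is conjugation by $s(k)$, i.e., $\sigma(k)(w) = a^k w a^{-k}$ for $w \in K$.

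To close up, I would note the consistency with Proposition~\ref{prop:felgon3.0}: under the isomorphism $\iota: B(1,n) \to \Z[1/n] \rtimes_\theta \Z$, the map $|\cdot|_a$ corresponds to the projection onto the second coordinate, so $K$ is carried isomorphically onto $\Z[1/n] \times \{0\} \cong \Z[1/n]$, and the conjugation action $\sigma$ of $\Z$ on $K$ is carried to $\theta$, confirming that the splitting produced here is the one already implicit in the previous proposition. There is essentially no obstacle beyond the routine check that the defining relation has $a$-exponent sum zero; the rest is a direct application of the standard splitting criterion for short exact sequences.
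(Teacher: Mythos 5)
Your proof is correct and follows essentially the same route as the paper: the paper also obtains exactness from the well-definedness and surjectivity of $|\cdot|_a$ (checked via the $a$-exponent sum of the relator) and splits the sequence by noting that $\Z$ is free, which is precisely the abstract form of your explicit section $s(k)=a^k$. Your additional consistency check against Proposition \ref{prop:felgon3.0} is a harmless extra, not a different method.
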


\begin{proof}
The first statement follows from surjectivity of $|\cdot|_a$.
Since $\Z$ is free, this sequence splits.
\end{proof}

\begin{prop}\label{prop:felgon3.2}
The kernel $K$ coincide with the
normalizer  $ N\langle b\rangle $ of the subgroup $\langle b \rangle$
generated by $b$ in $B(1,n)$:
\begin{equation}\label{ur:felgon3.2}
  0 \longrightarrow N\langle b\rangle \longrightarrow B(1,n)
\stackrel{|\ \ |_a}\longrightarrow \Z \longrightarrow 1.
\end{equation}
\end{prop}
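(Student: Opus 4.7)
The plan is to leverage the identification from Proposition~\ref{prop:felgon3.0} and work entirely inside the semidirect product $\Z[1/n]\rtimes_\theta\Z$. Under the isomorphism $\iota$, the generator $b$ corresponds to $(1,0)$ and $a$ to $(0,1)$, so $|\cdot|_a$ becomes projection onto the $\Z$-factor. Consequently $K=\Z[1/n]\times\{0\}$ and $\langle b\rangle=\Z\times\{0\}$, and the task reduces to showing that the normalizer of $\Z\times\{0\}$ in $\Z[1/n]\rtimes_\theta\Z$ is precisely $\Z[1/n]\times\{0\}$. Once that equality is established, the exact sequence~(\ref{ur:felgon3.2}) is just Proposition~\ref{prop:felgon3.1} with the kernel reidentified.

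First I would dispose of the easy inclusion $K\subset N\langle b\rangle$: the subgroup $\Z[1/n]\times\{0\}$ is abelian and contains $\Z\times\{0\}$, so every element of $K$ in fact centralizes $\langle b\rangle$.

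For the reverse inclusion I would take an arbitrary $g=(x,r)$ and carry out the conjugation of a generic generator $(s,0)\in\langle b\rangle$. Using the semidirect-product law $(x_1,r_1)*(y,r_2)=(x_1+y/n^{r_1},\,r_1+r_2)$ and the ensuing inverse formula $(x,r)^{-1}=(-n^r x,-r)$, a short calculation yields
$$
(x,r)*(s,0)*(x,r)^{-1}=(s/n^r,0).
$$
Hence conjugation by $g$ sends $\langle b\rangle\cong\Z$ onto the cyclic subgroup $n^{-r}\Z\subset\Z[1/n]$. For $g$ to lie in $N\langle b\rangle$ this image must coincide with $\Z$, forcing $n^{-r}=\pm 1$; since $n\ge 2$ this requires $r=0$, i.e., $g\in K$.

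The only point requiring real care is bookkeeping inside the semidirect product — pinning down the action $\theta(r)(y)=y/n^r$ and the inverse formula correctly. Once the conjugation identity displayed above is in hand the conclusion is immediate and uniform: the single numerical constraint $n^{-r}=\pm 1$ rules out both $r>0$ and $r<0$ without any case split, and no appeal to generators other than $b$ itself is needed.
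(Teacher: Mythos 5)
Your proof is correct, and it takes a genuinely different route from the paper's. You work concretely inside $\Z[1/n]\rtimes_\theta\Z$: the multiplication law, the inverse formula $(x,r)^{-1}=(-n^r x,-r)$, and the conjugation identity $(x,r)*(s,0)*(x,r)^{-1}=(s/n^r,0)$ are all right, and the observation that the cyclic subgroups $n^{-r}\Z$ and $\Z$ of $\Q$ coincide only when $n^{-r}=\pm1$ (hence $r=0$, since $n\ge 2$) gives the hard inclusion $N\langle b\rangle\subset K$, while the easy one holds because $K\cong\Z[1/n]$ is abelian and contains $\langle b\rangle$, so it centralizes it. The paper argues quite differently: it forms the quotient $B(1,n)/N\langle b\rangle$, notes that imposing $\ov{b}=1$ on the presentation $\langle a,b: a^{-1}ba=b^n\rangle$ leaves the group $\Z$ generated by $[a]$, identifies the resulting projection with $|\ \ |_a$, and finishes with the five lemma applied to the two exact sequences. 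Note that the paper's argument in effect reads $N\langle b\rangle$ as the \emph{normal closure} of $b$ rather than the normalizer (otherwise the quotient group and the presentation computation would not make sense, since a normalizer need not be a normal subgroup), whereas your computation proves the statement as literally worded, for the normalizer of $\langle b\rangle$; the two subgroups happen to coincide with $K$ here, so the proposition is true on either reading. What your approach buys is an explicit picture of conjugation in the semidirect product — it even shows that $K$ centralizes $\langle b\rangle$ and that the conjugates of $b$ generate all of $K$ — at the cost of some bookkeeping; the paper's approach is shorter and purely presentation-theoretic, but it silently relies on the normal-closure interpretation.
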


\begin{proof}
We have $ N\langle b\rangle\subset K $. The quotient $B(1,n)/N\langle
b\rangle $ has the following presentation: $\ov a^{-1}\ov b \ov a=
\ov b^n$, $\ov b=1$. Therefore this group is isomorphic to ${\Z}$
 under the identification $[a]\leftrightarrow 1_\Z$.
Hence the
natural projection coincides with the map $|\ \ |_a$
and we have the following commutative diagram
$$
\xymatrix{
0 \ar[r] &  N\langle b\rangle \ar[r]\ar[d]^\cap & B(1,n) \ar[r]\ar[d]^= &
B(1,n)/(N\langle b\rangle)\ar[r]\ar[d]^\cong & 1  \\
0 \ar[r]& K \ar[r] & B(1,n) \ar[r] & {\Z} \ar[r] & 1.
}
$$
The five-lemma completes the proof.
\end{proof}

\begin{prop}\label{prop:felgon3.4}
Any homomorphism $\phi: B(1,n) \to B(1,n)$ is a homomorphism of
the short exact sequence {\rm (\ref{ur:felgon3.2})}.
\end{prop}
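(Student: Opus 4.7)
\medskip\noindent
\emph{Proof proposal.}
The plan is to show $\phi(K)\ss K$, after which $\phi$ descends to a homomorphism $\overline{\phi}:\Z\to\Z$ on the quotients and produces the required morphism of the short exact sequence~(\ref{ur:felgon3.2}). To do this conceptually I would identify $K$ with the pre-image in $B(1,n)$ of the torsion part of the abelianization $B(1,n)^{\mathrm{ab}}$ and then invoke the general fact that every endomorphism of an Abelian group sends torsion elements to torsion elements.

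First I compute $B(1,n)^{\mathrm{ab}}$. Abelianizing the relation $a^{-1}ba=b^n$ kills the conjugation and leaves $b=b^n$, i.e.\ $b^{n-1}=e$. Hence $B(1,n)^{\mathrm{ab}}\cong \Z\oplus \Z/(n-1)$ with $[a]\mapsto(1,0)$ and $[b]\mapsto(0,[1])$. The exponent-sum homomorphism $|\cdot|_a$ vanishes on commutators, so it factors through $B(1,n)^{\mathrm{ab}}$ and is precisely the projection onto the first factor $\Z$. Therefore $K=\ker|\cdot|_a$ is exactly the pre-image of the torsion subgroup $\{0\}\oplus\Z/(n-1)$ under the abelianization map. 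Now any homomorphism $\phi:B(1,n)\to B(1,n)$ induces a homomorphism $\phi^{\mathrm{ab}}$ of $B(1,n)^{\mathrm{ab}}$, and the torsion subgroup of any Abelian group is fully invariant (if $(n-1)x=0$ then $(n-1)\phi^{\mathrm{ab}}(x)=0$), so $\phi^{\mathrm{ab}}$ sends $\{0\}\oplus\Z/(n-1)$ into itself. Pulling this back gives $\phi(K)\ss K$, and the induced map on the $\Z$-quotient is multiplication by $|\phi(a)|_a$.

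I do not expect a real obstacle once the identification of $K$ with the pre-image of the torsion part is in place; the critical ingredient is that $n-1\neq 0$, so that $[b]$ is a genuinely torsion class in $B(1,n)^{\mathrm{ab}}$ and is distinguished from the free part generated by $[a]$. A direct alternative, avoiding abelianization, uses Proposition~\ref{prop:felgon3.0}: writing $\phi(b)=(x,m)$ and $\phi(a)=(y,\ell)$ in $\Z[1/n]\rtimes_\theta\Z$ and applying $\phi$ to $a^{-1}ba=b^n$, a comparison of the $\Z$-coordinates of both sides yields $m=nm$, hence $m=0$ because $n>1$; then $\phi(b)\in K$, and since $K$ is the normal closure of $\<b\>$ in $B(1,n)$ (Proposition~\ref{prop:felgon3.2}), normality of $K$ forces $\phi(K)\ss K$.
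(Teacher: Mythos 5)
Your main argument is essentially identical to the paper's own proof: the paper likewise passes to the abelianization $B(1,n)_{\mathrm{ab}}\cong \Z\oplus\Z/(n-1)$, uses that the torsion subgroup is invariant under any endomorphism, and identifies $K=N\langle b\rangle$ as exactly the preimage of that torsion subgroup, which gives $\phi(K)\subset K$ and hence a morphism of the sequence. The proposal is correct (and your alternative computation in $\Z[1/n]\rtimes_\theta\Z$, giving $|\phi(b)|_a=n|\phi(b)|_a$ and then using normality of $K$, is also sound), so nothing needs to be fixed.
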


\begin{proof}
Let $\bar \phi$ be the  homomorphism induced by $\phi$ on
the abelianization $B(1,n)_{ab}$ of $B(1,n)$. The group
$B(1,n)_{ab}$ is isomorphic to $Z_{n-1} +Z$. The torsion elements of
$B(1,n)_{ab}$ form a subgroup isomorphic to $Z_{n-1}$ which is invariant
under any homomorphism. The preimage of this subgroup under the projection
$B(1,n) \to B(1,n)_{ab}$ is  exactly the subgroup
$N(b)$, i.e. the elements represented by words where the sum of the powers
of $a$ is zero. So it follows that $N(b)$ is mapped into $N(b)$ and the
result follows.
\end{proof}

\begin{teo}\label{teo:felgon3.4}
For any injective
homomorphism $\phi$ of $B(1,n)$ the  Reidemeister number  is
infinite.
\end{teo}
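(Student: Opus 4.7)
The plan is to reduce to the short exact sequence~(\ref{ur:felgon3.2}) and analyse the induced endomorphism of the $\Z$-quotient. By Proposition~\ref{prop:felgon3.4}, any endomorphism $\phi$ descends to $\bar\phi\colon\Z\to\Z$, which is multiplication by some $k\in\Z$. The projection of Reidemeister classes explained in Section~\ref{sec:extens} (see~(\ref{eq:epiofclassforexs})) gives $R(\phi)\ge R(\bar\phi)$, and Theorem~\ref{teo:reidandcokerabel} yields $R(\bar\phi)=\#\Coker(\bar\phi-\Id)=\#(\Z/(k-1)\Z)$, which is infinite precisely when $k=1$. Hence it suffices to show that an injective $\phi$ forces $k=1$.

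To pin $k$ down I would use the defining relation $a^{-1}ba=b^n$ inside the model $\Z[1/n]\rtimes_\theta\Z$ of Proposition~\ref{prop:felgon3.0}. Since $K=\Z[1/n]$ is $\phi$-invariant, I may write $\phi(a)=(\alpha,k)$ and $\phi(b)=(\beta,0)$ for some $\alpha,\beta\in\Z[1/n]$. A short calculation in the semidirect product (with $\theta(m)(y)=y/n^m$) gives $\phi(a)^{-1}\phi(b)\phi(a)=(\beta n^k,0)$ and $\phi(b)^n=(n\beta,0)$, so the relation collapses to $\beta(n^k-n)=0$ in $\Z[1/n]$. Since $\Z[1/n]$ is torsion-free and $n>1$, this forces either $k=1$ or $\beta=0$.

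The injectivity hypothesis excludes the second alternative: if $\beta=0$ then $\phi(b)=e$, contradicting the fact that $b$ has infinite order in $B(1,n)$. Hence $k=1$, so $\bar\phi=\Id_\Z$, giving $R(\bar\phi)=\infty$ and consequently $R(\phi)=\infty$. I do not foresee any serious obstacle; the only step demanding care is the semidirect-product computation of $\phi(a)^{-1}\phi(b)\phi(a)$, where the twist $\theta$ must be tracked correctly across three factors. Once the constraint $\beta(n^k-n)=0$ is in hand, the conclusion is immediate from injectivity together with torsion-freeness of $\Z[1/n]$.
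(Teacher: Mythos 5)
Your proof is correct and follows essentially the same route as the paper: reduce via Proposition~\ref{prop:felgon3.4} to the induced endomorphism $\ov\phi$ of $\Z$, use the surjection of Reidemeister classes from (\ref{eq:epiofclassforexs}), and then force $k=1$ by applying $\phi$ to the relation $a^{-1}ba=b^n$ inside $\Z[1/n]\rtimes_\theta\Z$, with injectivity ruling out $\phi(b)=e$. The only cosmetic difference is that you compute $R(\ov\phi)=\#\Coker(\ov\phi-\Id)$ explicitly and absorb the case $k=0$ into the semidirect-product computation, where the paper instead notes directly that $\ov\phi$ is injective and that $\ov\phi=\Id_\Z$ gives infinitely many classes.
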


\begin{proof}
By Proposition \ref{prop:felgon3.4} it is a
homomorphism of short exact sequence. The induced map $\ov\phi$
on the quotient is
an injective endomorphism of $\Z$. If $\ov \phi=\Id_\Z$,
then by (\ref{eq:epiofclassforexs}) the number of Reidemeister
classes is  infinite. Hence, $\ov \phi$ is multiplication by $k\ne 0,1$.
But this is impossible. Indeed, when we apply
$\phi$ to the relation $a^{-1}ba=b^n$, under the identification of
Proposition \ref{prop:felgon3.0}
$\iota:B(1,n)\cong \Z[1/n]\rtimes \Z$  we have,
because $\phi(b)\in N\<b\>$ and hence $\iota(\phi(b))=(d,0)$ for
some $d\in\Z[1/n]$,
$$
(n d,0)=\iota(\phi (b^n))=
\iota(\phi(a^{-1}ba))=\iota(a^{-k} \phi(b) a^{k})=
(d\cdot n^k,0).
$$
This implies that either  $n^{1-k}=1$ or $\phi(b)=0$.
\end{proof}


\def\cprime{$'$} \def\cprime{$'$} \def\cprime{$'$} \def\cprime{$'$}
  \def\cprime{$'$} \def\dbar{\leavevmode\hbox to 0pt{\hskip.2ex
  \accent"16\hss}d} \def\cprime{$'$} \def\cprime{$'$}
  \def\polhk#1{\setbox0=\hbox{#1}{\ooalign{\hidewidth
  \lower1.5ex\hbox{`}\hidewidth\crcr\unhbox0}}}
\providecommand{\bysame}{\leavevmode\hbox to3em{\hrulefill}\thinspace}
\providecommand{\MR}{\relax\ifhmode\unskip\space\fi MR }
\providecommand{\MRhref}[2]{%
  \href{http://www.ams.org/mathscinet-getitem?mr=#1}{#2}
}
\providecommand{\href}[2]{#2}

\end{document}